\documentclass[
final
]{dmtcs-episciences}

\usepackage[utf8]{inputenc}

\usepackage[square,numbers]{natbib}

\usepackage{amssymb,amsmath,amsthm}

\usepackage{mathdots}
\usepackage{cases}
\usepackage{csquotes}
\usepackage{graphicx,psfrag}
\usepackage{microtype}
\usepackage{xspace}
\usepackage{thmtools}
\usepackage{marvosym}
\usepackage{tabularx}
\usepackage{color,soul}
\usepackage{hyperref,cleveref}
\usepackage[normalem]{ulem}
\usepackage{paralist,enumerate}					  

\usepackage[vlined,linesnumbered,ruled]{algorithm2e}
\usepackage{algorithmic}
\usepackage{float}
\usepackage{subfig}

\usepackage{tikz}
\usetikzlibrary{positioning}
\usetikzlibrary{shapes}
\usetikzlibrary{shapes.symbols,patterns}
\usetikzlibrary{calc,through,backgrounds}
\usetikzlibrary{decorations.pathreplacing}
\usetikzlibrary{arrows,decorations.pathmorphing,backgrounds,positioning,fit,petri}

\usepackage{orcidlink}




\newtheorem{theorem}{Theorem}
\newtheorem{lemma}[theorem]{Lemma}

\newtheorem{observation}[theorem]{Observation}
\newtheorem{corollary}[theorem]{Corollary}


\newcommand{\paren}[1]{\left( #1 \right)}

\newcommand{\set}[1]{\left\{ #1 \right\}}

\newcommand{\ceil}[1]{\left\lceil #1 \right\rceil}

\newcommand{\abs}[1]{\left| #1 \right|}

\newcommand{\hd}{{\hat d}}
\newcommand{\td}{{\tilde d}}



\newcommand{\bidx}[1]{\text{box}(#1)}

\newcommand{\vol}[1]{{\sum{#1}}}
\newcommand{\conj}[1]{\widetilde{#1}}

\newcommand{\ignore}[1]{}
\newcommand{\remove}[1]{}

\def\bp{{\tt BP}}

\def\seqd{d}

\def\cP{\mathcal{P}}
\def\bp{{\tt BP}}

\def\maxmult{\mbox{\sf MaxMult}}
\def\maxmultbi{\mbox{\sf MaxMult}^{bi}}

\def\totmult{\mbox{\sf TotMult}}

\def\totmultbi{\mbox{\sf TotMult}^{bi}}

\newcommand{\totg}{tot-graphic\xspace}
\newcommand{\maxg}{max-graphic\xspace}
\newcommand{\totbi}{tot-bigraphic\xspace}
\newcommand{\maxbi}{max-bigraphic\xspace}

\newcommand{\partition}{\textsc{Partition}\xspace}
\newcommand{\calF}{\mathcal{F}}




\title{Degree Realization by Bipartite Multigraphs%
\thanks{%
A preliminary version of this paper appeared in SIROCCO 2023~\cite{BBPR23}.
A preliminary version of one of the results of this paper appeared in SWAT 2022~\cite{bar2022realizing}.
This work was supported by US-Israel BSF grant 2022205.}}

\author[Amotz Bar-Noy et al.]{%
Amotz Bar-Noy\affiliationmark{1}
\and 
Toni B\"ohnlein\affiliationmark{2}
\and
David Peleg\affiliationmark{3}
\and 
Dror Rawitz\affiliationmark{4}%
\thanks{Part of the research was done while this author was on sabbatical at the 
department of computer science of Reykjavik University.}
}

\affiliation{%
The City University of New York (CUNY), New York, NY, USA.
\orcidlink{0009-0004-7021-9072}
\\
Huawei, Zurich, Switzerland.
\orcidlink{0009-0001-2152-022X}
\\
The Weizmann Institute of Science, Rehovot, Israel.
\orcidlink{0000-0003-1590-0506}
\\
Bar Ilan University, Ramat-Gan, Israel. 
\orcidlink{0000-0003-0323-6097}
}

\keywords{Degree Sequences, 
Graph Realization, 
Bipartite Graphs, 
Graphic Sequences, 
Bigraphic Sequences, 
Multigraph Realization.}


\begin{document}

\publicationdata{vol. 28:2}{2026}{7}{10.46298/dmtcs.15158}{2025-01-29; 2025-01-29; 2025-11-18; 2025-12-18}{2025-12-26}

\maketitle

\begin{abstract}
The problem of realizing a given degree sequence by a multigraph
can be thought of as a relaxation of the classical degree realization problem
(where the realizing graph is simple).
This paper concerns the case where the realizing multigraph is required
to be bipartite.

The problem of characterizing degree sequences that can be realized by
a bipartite (simple) graph has two variants. In the simpler one, termed BDR$^P$,
the partition of the degree sequence into two sides
is given as part of the input. A complete characterization for realizability
in this variant was given by Gale and Ryser over sixty years ago.
However, the variant where the partition is not given, termed BDR,
is still open.

For bipartite multigraph realizations, there are also two variants.
For BDR$^P$, where the partition is given as part of the input,
a complete characterization was known for determining whether 
there is a multigraph realization whose underlying graph is bipartite,
such that the \emph{maximum} number of copies of an edge is at most $r$.
We present a complete characterization for determining if there is a
bipartite multigraph realization such that the \emph{total} number of
excess edges is at most $t$.
We show that optimizing these two measures may lead to different realizations,
and that optimizing by one measure may increase the other substantially.
As for the variant BDR, where the partition is not given, we show that
determining whether a given (single) sequence admits a bipartite multigraph
realization is NP-hard. Moreover, we show that this hardness result extends 
to any graph family which is a sub-family of bipartite graphs and a super-family 
of paths.
On the positive side, we provide an algorithm
that computes optimal realizations for the case where the number of balanced 
partitions is polynomial, and present sufficient conditions for the existence
of bipartite multigraph realizations that depend only on the largest degree
of the sequence.
\end{abstract}




\section{Introduction}
\label{sec:intro}

\subsection{Background and Motivation}

\paragraph*{Degree realization.}
This paper concerns a classical network design problem known as the
\textsc{Graphic Degree Realization} problem (GDR).
The number of neighbors or connections of a vertex in a graph is called its
\emph{degree}, and it provides information on its centrality and importance.
For the entire graph, the sequence of vertex-degrees is a significant
characteristic which has been studied for over sixty years.
The graphic degree realization problem asks if a given non-increasing sequence of positive
integers $d = (d_1, ..., d_n)$ is \emph{graphic}, i.e., if it is the sequence of vertex-degrees of some graph. 
Erd\"os and Gallai~\cite{EG60} gave a characterization
for graphic sequences, though not a method for finding a \emph{realizing} graph.
Havel and Hakimi~\cite{havel55,hakimi62} proposed an algorithm that either
generates a realizing graph or proves that the sequence is not graphic.
Degree realization problems have found several interesting applications, 
most notably in network design, 
and also in the study of 
social networks~\cite{BD11,Cloteaux16,EGT14,MV02}, chemical networks~\cite{TatsuyaN13}, 
and network evolution~\cite{KMCET21}.


\paragraph*{Relaxed degree realization by multigraphs.}
An interesting direction in the study of realization problems involves
\emph{relaxed} (or approximate) realizations (cf. \cite{bar2021relaxed}). 
Such realizations are well-motivated by applications in two wider
contexts.
In scientific contexts, a given sequence may represent (noisy) data resulting
from an experiment, and the goal is to find a model that fits the data.
In such situations, it may happen that \emph{no} graph fits the input
degree sequence exactly, and consequently it may be necessary to search for
the graph ``closest'' to the given sequence.
In an engineering context, a given degree sequence constitutes constraints
for the design of a network. It might happen that satisfying all of the
desired constraints \emph{simultaneously} is not feasible, or causes
other issues, e.g., unreasonably increasing the costs.  
In such cases, relaxed solutions bypassing the problem may be relevant.


In the current paper we focus on a specific type of relaxed realizations
where the graph is allowed to have parallel edges, namely, the realization
may be a \emph{multigraph.}
It is easy to verify that if \emph{(multiple) self-loops} are allowed,
then \emph{every} sequence $d = (d_1, \ldots, d_n)$ whose sum $\sum_i d_i$
is even has a realization by a multigraph. Hence, we focus on the case where
self-loops are not allowed.

The problem of degree realization by multigraphs has been studied
in the past as well.
Owens and Trent~\cite{owens1967determining} gave a condition for the existence of a multigraph realization.
Will and Hulett~\cite{WillHulett04} studied the problem of finding a multigraph realization of a given sequence such that the underlying graph of the realization contains as few edges as possible. They proved that such a realization is composed of components, each of which is either a tree or a tree with a single odd cycle.
Hulett, Will, and Woeginger~\cite{HWW08} showed that this problem is strongly NP-hard.


\paragraph*{Degree realization by bipartite graphs.}
The \textsc{Bigraphic Degree Realization} problem (BDR) is a natural variant
of GDR, 
where the realizing graph is required to be bipartite.
The problem has a sub-variant, denoted BDR$^P$, in which \emph{two}
sequences are given as input, representing the vertex-degree sequences
of the two sides of a bipartite realizing graph.
(In contrast, in the general problem, a \emph{single} sequence is given
as input, and the goal is to find a realizing bipartite graph
based on some partition of the given sequence.) 
%
BDR$^P$ 
was solved by Gale and Ryser~\cite{gale1957theorem,ryser1957combinatorial} even before
Erd\"os and Gallai's characterization of graphic sequences.
However, the general problem -- mentioned as an open problem over forty years
ago~\cite{Rao81} -- remains unsolved today.

A (non-increasing) sequence of integers $d = (d_1, ..., d_n)$ can only be \emph{bigraphic}, i.e.,
the vertex-degree sequence of a bipartite graph, if it can be partitioned
into two sub-sequences or \emph{blocks} of equal total sum.
The latter problem is known as the \emph{partition problem} and it is solvable in polynomial time assuming that $d_1 < n$ 
(which is a necessary condition for $d$ to be bigraphic). Yet, BDR bears two obstacles. 
First, a sequence may have several partitions of which some are bigraphic and others are not. 
Second, the number of partitions may be exponentially large in $n$. 
Recent attempts on the BDR problem (see~\cite{bar2022realizing,BBPR25-hl,BBPR25-equal}) 
try to identify a small set of partitions, which are suitable to decide BDR for the whole sequence.
Each partition in the small set is tested using the Gale-Ryser characterization.
In case all of them fail the test, it is conjectured that no partition of the sequence is bigraphic.
The conjecture was shown to be true in case there exists a special partition that (perfectly) splits the degrees into small and large ones. 

Paralleling the above discussion concerning  relaxed degree realizations
by \emph{general} multigraphs,
one may look for relaxed degree realizations by \emph{bipartite multigraphs}.
This question is our main interest in the current paper.


\subsection{Our Contribution}

In this paper, we consider the problem of finding relaxed
\emph{bipartite multigraph} realizations for a given degree sequence
or a given partition. 
That is, the relaxed realizations must fulfill the degree constraints exactly
but are allowed to have parallel edges.
(Self-loops are disallowed.)

To evaluate the quality of a realization by a multigraph, we use two measures:
\begin{compactenum}[(i)]
\item The \emph{total multiplicity} of the multigraph, i.e., the number of parallel edges. 
\item The \emph{maximum multiplicity} of the multigraph, i.e., 
        the maximum number of edges between any two of its vertices.
\end{compactenum}
As shown later, these measures are non-equivalent, in the sense that
there are examples for sequences where realizations optimizing one measure
are sub-optimal in the other, and vice-versa.

\Cref{sec:prelim} introduces formally the basic notions and measures under study.
For relaxed realizations by \emph{general} multigraphs, 
it follows from the characterizations given, respectively, by 
Owens and Trent~\cite{owens1967determining}
and 
Chungphaisan~\cite{chungphaisan1974conditions},
how to optimize the two measurements and find the respective
multigraph realizations. 
For relaxed realization by \emph{bipartite} multigraphs,
finding a realization for BDR$^P$ (the given partition variant) that minimizes
the \emph{maximum} multiplicity follows from the characterization presented
by Berge~\cite{miller2013reduced}.

In \Cref{sec:bitotmult} we provide a characterization for bipartite multigraphs 
based on a given partition (BDR$^P$). 
More specifically, we present results on multigraph realizations
with bounded \emph{total} multiplicity for BDR$^P$.

In \Cref{sec:gap} we show that optimizing total multiplicity and 
maximum multiplicity may lead to different realizations. Moreover, 
optimizing by one measure may increase the other substantially.

One necessary condition for a (non-increasing) sequence $d = (d_1, \ldots, d_n)$ to be bigraphic
is that it can be \emph{partitioned}.
If $d_1 < n$, this problem can be decided in polynomial time. 
However, for a multigraph realization to exist, the inequality $d_1 < n$
is not a necessary condition, and it turns out that BDR is NP-hard.
We review this matter in greater detail in Section~\ref{sec:single-sequence-hardness} 
and show that this hardness results extends to any graph family which is a sub-family of bipartite graphs and a super-family of paths.
We discuss an output sensitive algorithm to generate all partitions of a given sequence.
In case the number of partitions of a sequence is small, the algorithm
allows us to find optimal realizations with respect to both criteria. 

In Section~\ref{sec:single-sequence-small-degree}, we discuss sufficient 
conditions for the existence of approximate bipartite realizations that depend
only on the largest degree of a given sequence.

\section{Preliminaries}
\label{sec:prelim}

Let $d = (d_1, d_2, \ldots, d_n)$ be a sequence of positive
integers in non-increasing order. (All sequences that we consider 
are assumed to be of positive integers and in a non-increasing order.)
The \emph{volume} of $d$ is $\vol{d} = \sum_{i=1}^{n} d_i$.
For a graph $G$, denote the sequence of its vertex-degrees by $\deg(G)$.
Sequence $d$ is \emph{graphic} if there is a graph $G$ such that $\deg(G) = d$.
We say that $G$ is a \emph{realization} of $d$.
Note that every realization of $d$ has $m = \vol{d}/2$ edges.
Consequently, a graphic sequence must have even volume. 
In turn, we call a sequence of positive integers with even volume a \emph{degree sequence}.
We use the operator $\circ$ to define $d \circ d'$ as the concatenation
of two degree sequences $d$ and $d'$ (in non-increasing order).


\subsection{Multigraphs as Approximate Realizations}

Let $H = (V,E)$ be a multigraph without loops.  In this case, $E$ is a
multiset.
Denote by $E_H(v,u)$ the multiset of edges connecting $v,u \in V$. 
If $\abs{E_H(v,u)}>1$, we say that the edge $(v,u)$ has $\abs{E_H(v,u)}-1$
\emph{excess} copies.
Let $E'$ be the set that is obtained by removing excess edges from $E$.
The graph $G = (V,E')$ is called the \emph{underlying graph} of $H$.

We view multigraphs as \emph{approximate} realizations of sequences that are not graphic.
Owens and Trent~\cite{owens1967determining} gave a condition for the existence of a multigraph realization.
\begin{theorem}[Owens and Trent~\cite{owens1967determining}]
A degree sequence $d$ can be realized by a multigraph if and only if $d_1 \leq \sum_{i=2}^n d_i$. 
\end{theorem}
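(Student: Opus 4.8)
The plan is to prove necessity by a direct edge-counting argument and sufficiency by a greedy construction that repeatedly connects the two vertices of largest residual degree. For necessity, observe that vertex $1$ is incident to exactly $d_1$ edge-endpoints, and since loops are forbidden, each such endpoint is matched to an endpoint at some vertex $i \ge 2$. As vertex $i$ supplies at most $d_i$ endpoints in total, summing over $i \ge 2$ yields $d_1 \le \sum_{i=2}^n d_i$, which is exactly the stated condition.

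For sufficiency I would maintain, as an invariant along the construction, that the residual degree sequence has even volume and satisfies $2d_1 \le \sum_i d_i$ (which is the stated condition rewritten, since $d_1 \le \sum_{i\ge 2} d_i \iff 2d_1 \le \sum_i d_i$). At each step, pick two vertices of largest residual degree, add an edge between them, and decrement both of their degrees. Equivalently, one can phrase the whole construction as a \emph{stub-pairing} problem: form a multiset containing $d_i$ tokens of type $i$, and pair up the tokens so that no pair is monochromatic; such a pairing is precisely a loopless multigraph realization, and the claim becomes the classical fact that a multiset of even size admits a pairing into non-equal elements if and only if no type has multiplicity exceeding half the total. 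The evenness of the volume is clearly preserved, since each step removes two tokens (endpoints), so the main content is to show that the inequality $2d_1 \le \sum_i d_i$ survives each step.

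The heart of the argument — which I expect to be the main obstacle — is this preservation check. Let $d_1 \ge d_2 \ge d_3 \ge \cdots$ be the current sequence, write $\Sigma = \sum_i d_i$, and connect vertices $1$ and $2$; the new volume is $\Sigma - 2$ and the new maximum is $\max(d_1 - 1,\, d_3)$. When $d_1 - 1 \ge d_3$ the new maximum is $d_1 - 1$, and the required inequality $2(d_1 - 1) \le \Sigma - 2$ is just the old one. The delicate case is $d_3 > d_1 - 1$, which forces $d_1 = d_2 = d_3$ and leaves the maximum at $d_1$; here I would invoke parity, noting that $\Sigma - 2d_1$ is even and, since three equal positive degrees give $\Sigma \ge 3d_1 > 2d_1$, it is positive and hence at least $2$, so $2d_1 \le \Sigma - 2$ as needed. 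Finally, one checks the process never gets stuck needing a loop: whenever $\Sigma > 0$ at least two vertices have positive residual degree, for otherwise $\Sigma = d_1$ would contradict $2d_1 \le \Sigma$ with $d_1 \ge 1$. The construction therefore runs until the volume reaches $0$, producing the desired loopless multigraph realization.
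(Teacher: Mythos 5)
Your proof is correct, and it is worth noting that the paper does not prove this theorem at all: it is quoted from Owens and Trent with a citation only, and the nearest argument in the paper (the proof of Theorem~\ref{thm:totmult}) \emph{assumes} this realizability criterion rather than establishing it. So your argument is necessarily a different route, and it is a sound, self-contained one. The necessity direction is the standard endpoint count. For sufficiency, your greedy scheme (join the two vertices of largest residual degree, decrement, re-sort) is valid, and you correctly isolated the only delicate point: after a step the new maximum is $\max(d_1-1,\,d_3)$ (with $d_3=0$ if fewer than three entries remain), and in the problematic case $d_1=d_2=d_3$ the invariant $2d_1\le \Sigma-2$ is rescued by parity, since $\Sigma-2d_1$ is even (each step removes exactly two endpoints, so the volume stays even) and positive (as $\Sigma\ge 3d_1>2d_1$ with $d_1\ge 1$), hence at least $2$. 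Your stuck-check — if only one vertex had positive residual degree then $\Sigma=d_1$ would contradict $2d_1\le\Sigma$ — guarantees both endpoints of each new edge are distinct vertices of positive residual degree, so no loop is ever needed and the process terminates at volume $0$ with a realization. Compared with the transformation style the paper uses for the total-multiplicity analogue (splitting excess edges through auxiliary degree-$2$ vertices and suppressing them), your invariant-preserving greedy buys an explicit construction in $\Sigma/2$ edge insertions and proves, in passing, the classical fact that an even-size multiset of tokens admits a pairing with no monochromatic pair if and only if no type exceeds half the total; the paper's citation-only treatment buys brevity but yields no algorithm.
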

To measure the quality of an approximate realization we introduce two metrics.
First, the \emph{maximum multiplicity} of a multigraph $H$ is the maximum number of  
copies of an edge, namely
\[
\maxmult(H) \triangleq \max_{(v,w) \in E } (\abs{E_H(v,w)})
~,
\]
and for a sequence $d$ define
\[
\maxmult(d) \triangleq \min\{ \maxmult(H) \mid H \text{ realizes }  d \}
~.
\]
We say that
a sequence $d$ is \emph{$r$-\maxg} if $\maxmult(d) \leq r$, for a positive integer $r$.

Second, the \emph{total multiplicity} of a multigraph $H$ is the total number of 
excess copies, namely
\[
\totmult(H) 
\triangleq \sum_{(v,w) \in E } (\abs{E_H(v,w)} - 1)
= \abs{E} - \abs{E'}
~,
\]
where $E'$ is the edge set of the underlying graph of $H$.
For a sequence $d$ define
\[
\totmult(d) \triangleq \min\{ \totmult(H) \mid H \text{ realizes }  d \}
~.
\]
We say 
a sequence $d$ is \emph{$t$-\totg} if $\totmult(d) \leq t$, for a positive integer $t$.



\subsection{General Multigraphs}

Given a degree sequence $d$, our goal is to compute $\maxmult(d)$ and $\totmult(d)$.

We note that the best realization in terms of maximum multiplicity is not necessarily the same
as the best one in terms of total multiplicity. See more on this issue in \Cref{sec:gap}.

Next, we iterate the characterization of Erd\"os and Gallai~\cite{EG60}
for graphic sequence.

\begin{theorem}[Erd\"os-Gallai~\cite{EG60}]
\label{thm:graphic}	
A degree sequence $\seqd$ is graphic if and only if,  
for $\ell = 1, \ldots, n$,
\begin{equation}
\label{eq:EG}
\displaystyle \sum_{i=1}^\ell d_i \le \ell(\ell-1) + \sum_{i= \ell + 1}^n \min\{\ell,d_i\}
~.
\end{equation}
\end{theorem}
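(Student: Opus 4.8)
The statement is the Erd\H{o}s--Gallai characterization, and I would prove the two directions separately, since the necessity is a short counting argument while the sufficiency is the substantial part. For \textbf{necessity}, suppose $d$ is realized by a graph $G$ on vertices $v_1,\dots,v_n$ with $\deg(v_i)=d_i$, and fix $\ell$. Let $A=\{v_1,\dots,v_\ell\}$. The quantity $\sum_{i=1}^\ell d_i$ counts the edge-endpoints incident to $A$, which splits into endpoints of edges lying inside $A$ and endpoints of edges crossing from $A$ to $V\setminus A$. The edges inside $A$ number at most $\binom{\ell}{2}$ and each contributes two such endpoints, giving at most $\ell(\ell-1)$. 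For the crossing edges, each vertex $v_i$ with $i>\ell$ is joined to at most $\min\{\ell,d_i\}$ vertices of $A$, since it has at most $\ell$ potential neighbors in $A$ and at most $d_i$ neighbors overall. Summing the two contributions yields exactly \eqref{eq:EG}.

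For \textbf{sufficiency} I would argue by induction, using the Havel--Hakimi reduction~\cite{havel55,hakimi62} as the engine. The plan is: assume $d$ satisfies \eqref{eq:EG} and has even volume with $d_1\ge 1$, and form the reduced sequence $d'$ obtained by deleting $d_1$ and subtracting one from each of the $d_1$ next-largest entries, re-sorted into non-increasing order. One first establishes the reduction lemma that $d$ is graphic if and only if $d'$ is: the ``if'' direction adds a fresh vertex adjacent to the $d_1$ vertices whose degrees were decremented, while the ``only if'' direction uses a standard edge-swap (2-switch) argument to pass to a realization in which the top vertex is adjacent to exactly the $d_1$ highest-degree vertices. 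Granting this lemma, the induction closes once we show that $d'$ again satisfies the Erd\H{o}s--Gallai inequalities, whence by the inductive hypothesis $d'$ is graphic and the reduction lemma returns a realization of $d$.

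The \textbf{main obstacle} is precisely the implication that $d$ satisfying \eqref{eq:EG} forces $d'$ to satisfy \eqref{eq:EG} as well. This requires a careful case analysis over the prefix length $\ell$ for $d'$, tracking how the decrements and the re-sorting shift the partial sums $\sum_{i\le\ell}d_i$ on the left and the truncated tail sums $\sum_{i>\ell}\min\{\ell,d_i\}$ on the right; the delicate cases are those where $\ell$ straddles the block of decremented entries, and there one must verify that the drop on the left-hand side never exceeds the drop on the right-hand side. As an alternative that sidesteps the re-sorting bookkeeping, I would consider a direct induction on $\vol{d}$, decrementing the largest and the smallest positive entries by one and joining the corresponding vertices, though this variant carries its own obligation to check that simplicity (no parallel edge forced between the joined pair) can always be maintained.
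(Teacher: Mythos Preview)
The paper does not prove this theorem; it is stated as the classical Erd\H{o}s--Gallai characterization, cited to~\cite{EG60}, and used as a black box throughout. There is no proof in the paper to compare your proposal against.

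As a standalone sketch your outline is correct. The necessity argument is complete as written. For sufficiency via Havel--Hakimi you have correctly isolated the real work: showing that the reduced sequence $d'$ again satisfies all of the inequalities~\eqref{eq:EG}. That implication is genuine and somewhat tedious; your description of the case split (whether the prefix index $\ell$ for $d'$ lies below, inside, or above the block of decremented entries) is accurate, but turning it into a full argument takes several pages of inequality manipulation that you have only gestured at. The alternative you mention at the end---decrement the largest and the smallest positive entries and induct on the volume---is essentially Choudum's proof, which is shorter and sidesteps the re-sorting bookkeeping; if you intend to write the proof out in full, that route is the more economical choice.
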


Theorem~\ref{thm:graphic} implies an $\mathcal{O}(n)$ algorithm to verify whether a sequence is graphic. 
Chungphaisan~\cite{chungphaisan1974conditions} extended the above characterization 
to multigraphs with bounded maximum multiplicity as follows.

\begin{theorem}[Chungphaisan~\cite{chungphaisan1974conditions}] 
\label{thm:r-graphic}	
Let $r$ be a positive integer. A degree sequence $d$ is $r$-\maxg 
if and only if, for $\ell = 1, \ldots, n$,
\begin{equation}
\label{eq:r-graphic}
\displaystyle \sum_{i=1}^\ell d_i \leq r \ell(\ell-1)  + \sum_{i= \ell + 1}^n \min\{r \ell,d_i\}
~.
\end{equation}
\end{theorem}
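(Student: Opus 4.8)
The plan is to prove the two directions separately, with necessity following from a direct counting argument and sufficiency from a constructive Havel--Hakimi-style reduction; since the case $r=1$ must recover Erd\"os--Gallai (Theorem~\ref{thm:graphic}), I would keep the argument structurally parallel to that classical proof and check that it specializes correctly.

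For necessity, suppose $H$ is a realization of $d$ with $\maxmult(H) \le r$, and fix $\ell \in \{1,\ldots,n\}$. I would bound $\sum_{i=1}^\ell d_i$ by classifying the edge-endpoints incident to the top $\ell$ vertices $v_1,\ldots,v_\ell$. Endpoints of edges lying entirely inside this set contribute at most $2r\binom{\ell}{2} = r\ell(\ell-1)$, since each of the $\binom{\ell}{2}$ pairs carries at most $r$ parallel edges. For each vertex $v_j$ with $j>\ell$, the number of edges joining it to the top set is at most $r\ell$ (at most $r$ to each of the $\ell$ top vertices) and at most $d_j$ (its total degree), hence at most $\min\{r\ell, d_j\}$. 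Summing these contributions yields exactly inequality~\eqref{eq:r-graphic}.

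For sufficiency, assume $d$ satisfies all the inequalities; I would argue by induction on $\vol{d}$ (equivalently on $n$) that $d$ admits an $r$-bounded realization. The inductive step removes the highest-degree vertex $v_1$ by a greedy \emph{lay-off}: connect $v_1$ to $v_2, v_3, \ldots$ in order, placing up to $r$ parallel edges on each pair until the demand $d_1$ is exhausted, and let $d'$ be the re-sorted residual sequence on the remaining $n-1$ vertices. The two facts to verify are that $d'$ is nonnegative and that it again satisfies~\eqref{eq:r-graphic} for the same parameter $r$; induction then supplies a realization $H'$ of $d'$, and re-attaching $v_1$ with its greedily chosen edges produces an $r$-bounded realization of $d$. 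Correctness of the greedy choice rests on an exchange lemma: among all $r$-bounded realizations, there is one in which $v_1$ sends its edges to the vertices of largest residual degree, established by a local swap that moves an edge of $v_1$ from a lower-degree neighbor to a higher-degree one while rerouting a compensating edge, keeping all degrees fixed and all multiplicities within $\{0,\ldots,r\}$.

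The main obstacle is the sufficiency direction, and specifically two coupled technical points. First, showing the swap in the exchange lemma can always be performed without violating the multiplicity bound: unlike the simple-graph case, a candidate edge slot may already hold up to $r-1$ copies, so the available capacity must be tracked carefully and the rerouting chosen accordingly. Second, verifying that the residual sequence $d'$ still satisfies the inequalities requires a case analysis on the threshold $\ell$, because subtracting the greedy assignment alters both the prefix sum $\sum_{i\le\ell} d_i$ and the truncated terms $\min\{r\ell,d_i\}$ in a way that depends on how far the lay-off of $v_1$ reaches relative to $\ell$; the regime where the bound $r\ell$ rather than $d_i$ is active is where the estimate is most delicate. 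An alternative that sidesteps the exchange lemma is to reduce directly to Theorem~\ref{thm:graphic} via a multiplicity-to-simplicity transformation of the sequence, but then establishing the correspondence between the two families of inequalities becomes the crux instead.
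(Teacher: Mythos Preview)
The paper does not prove this theorem; it is stated with attribution to Chungphaisan~\cite{chungphaisan1974conditions} and used as a black box, so there is no proof in the paper to compare your proposal against.

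On the proposal itself: the necessity argument is correct and standard. For sufficiency your overall plan is sound, but the exchange lemma you describe is extraneous for the argument as you have structured it. Once you establish that the greedy residual $d'$ again satisfies~\eqref{eq:r-graphic}, induction yields a realization of $d'$, and reattaching $v_1$ with its prescribed multiplicities yields one of $d$; no swap argument is needed anywhere in that chain. The exchange lemma is the engine of the separate Havel--Hakimi-type statement ``$d$ is $r$-\maxg if and only if the greedy residual $d'$ is,'' which is related but not what you are proving here. The genuine work, as you correctly flag, is the case analysis showing that the inequalities persist for $d'$; that part of your plan is on target, and dropping the exchange lemma simplifies the write-up.
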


Notice the similarity to the Erd\"os-Gallai equations. 
Moreover, since $\maxmult(d) \leq d_1$,
it follows that $\maxmult(d)$ can be computed in $\mathcal{O}(n \cdot \log(d_1) )$.

The problem of finding a multigraph realization with low total multiplicity 
was solved by Owens and Trent~\cite{owens1967determining}.
They showed that the minimum total multiplicity is equal to the 
minimum number of degree $2$ vertices that should be added to make 
the sequence graphic. We provide a simpler proof of their result.

\begin{theorem}[Owens and Trent~\cite{owens1967determining}]
\label{thm:totmult}
Let $d$ be a degree sequence such that $d_1 \leq \sum_{i=2}^n d_i$. 
Then, $d$ is $t$-\totg if and only if $\seqd \circ 2^t$ is graphic.
%
\end{theorem}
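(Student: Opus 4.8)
The plan is to set up a correspondence between the excess edges of a multigraph realization of $\seqd$ and the $t$ degree-$2$ vertices appended in $\seqd \circ 2^t$, via the mutually inverse operations of \emph{subdivision} and \emph{suppression}.

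For the forward direction, assume $\seqd$ is $t$-\totg, so there is a multigraph $H$ realizing $\seqd$ with $\totmult(H) = t' \le t$. I would process the excess copies one at a time: for each excess copy of an edge $(u,v)$, delete that copy and reroute it through a fresh vertex $w$, adding the edges $(u,w)$ and $(w,v)$. This preserves the degrees of $u$ and $v$, gives $w$ degree exactly $2$, and---since $w$ is new---introduces no parallel edge. After handling all $t'$ excess copies the result is a \emph{simple} graph realizing $\seqd \circ 2^{t'}$. To reach exactly $t$ appended vertices, I would repeatedly subdivide an arbitrary existing edge (legal because $\vol{d}\ge 2$ guarantees at least one edge, and each subdivision leaves an edge to subdivide again), each subdivision inserting one more degree-$2$ vertex. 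This yields a simple realization of $\seqd \circ 2^t$.

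For the reverse direction, assume $\seqd \circ 2^t$ is graphic, with simple realization $G$ on the core vertices (those carrying $\seqd$) and the $t$ appended degree-$2$ vertices $W$. Since every vertex of $W$ has degree $2$, the maximal paths whose internal vertices lie in $W$ (call them \emph{threads}) split the edges meeting $W$ into (a) threads joining two core vertices through $k \ge 1$ internal $W$-vertices, and (b) cycles contained entirely in $W$. I would delete the type-(b) cycles (they only consume appended vertices) and \emph{suppress} each type-(a) thread $c - w_1 - \cdots - w_k - c'$, replacing it by the single edge $(c,c')$; this keeps the degrees of $c$ and $c'$, removes the $w_i$, and adds at most one edge. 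For the count, note that starting from the simple core--core edges of $G$ (total multiplicity $0$) and adding one edge per thread raises the total multiplicity by at most one per thread, and the threads are vertex-disjoint in $W$, so there are at most $\abs{W} = t$ of them; hence the resulting multigraph realizes $\seqd$ with $\totmult \le t$.

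The main obstacle is that a thread may loop back to the \emph{same} core vertex ($c = c'$), so that suppression would create a forbidden self-loop; here necessarily $k \ge 2$, since $k=1$ would force a double edge at $c$ in the simple graph $G$. I would eliminate such loop-threads before suppressing, by an edge swap: the offending thread lies in a component containing a single core vertex, and since the hypothesis $d_1 \le \sum_{i=2}^n d_i$ forces $n \ge 2$ while all degrees are positive, there is an edge $(x,y)$ outside that component; deleting $(c,w_1)$ and $(x,y)$ and inserting $(c,x)$ and $(w_1,y)$ preserves every degree and keeps the graph simple, while rerouting the thread so that it now terminates at a distinct core vertex. Each such swap strictly decreases the number of loop-threads, so after finitely many swaps all threads are suppressible without self-loops. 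I expect the most delicate bookkeeping to be precisely here---verifying that the swaps terminate and never reintroduce self-loops, together with the careful excess count when several threads share the same pair of endpoints.
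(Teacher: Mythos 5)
Your overall plan coincides with the paper's proof: in the forward direction the paper also reroutes each excess copy of $(x,y)$ through a fresh degree-$2$ vertex and then pads (your iterated subdivision is actually cleaner than the paper's ``replace an edge by a path with $t-|F|$ edges'', which is off by one), and in the reverse direction the paper suppresses degree-$2$ vertices one at a time, charging at most one excess copy per suppression. The paper, however, silently skips the self-loop obstruction, and you correctly isolated it --- that is the real content of the reverse direction. Unfortunately, your repair of that obstruction rests on a false structural claim. A loop-thread at a core vertex $c$ need \emph{not} lie in a component with a single core vertex, and consequently a swap partner $(x,y)$ outside the component need not exist. Concretely, take $d = (4,2,2)$ (which satisfies $d_1 \le \sum_{i=2}^{n} d_i$) and $t = 2$, and realize $d \circ 2^2 = (4,2,2,2,2)$ by the connected simple graph on core vertices $c,u,u'$ and appended vertices $w_1,w_2$ with edges $(c,u),(c,u'),(u,u'),(c,w_1),(w_1,w_2),(w_2,c)$. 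The loop-thread $c - w_1 - w_2 - c$ sits in a component containing three core vertices; since the graph is connected there is no edge outside the component, and the only candidate partner $(u,u')$ violates your simplicity requirement in both orientations, because $(c,u)$ and $(c,u')$ are already present. Your procedure halts on this instance, even though the conclusion holds: the multigraph with doubled edges $(c,u)$ and $(c,u')$ realizes $d$ with $\totmult = 2 \le t$.

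The missing idea is that the designation of which degree-$2$ vertices are ``appended'' is not part of the data: any $t$ vertices of degree $2$ in a realization of $d \circ 2^t$ may play that role, since the surviving degrees still form $d$. In the example above, choosing $W = \set{w_1, u}$ instead of $\set{w_1,w_2}$ turns both threads into type (a) with distinct endpoints ($c - w_1 - w_2$ and $c - u - u'$), and suppression gives excess exactly $2$. Alternatively one can enlarge your swap move to delete an \emph{internal} thread edge $(w_i,w_{i+1})$ together with any non-adjacent edge (here swapping $(w_1,w_2)$ with $(u,u')$ also works); either way, one must prove such a choice always exists under the hypothesis $d_1 \le \sum_{i=2}^n d_i$, which is precisely where that hypothesis earns its keep (for $d=(4,1,1)$, which violates it, no rescue is possible). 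A secondary, fixable slip: your claim that each swap strictly decreases the number of loop-threads fails if the partner edge lies on a type-(b) cycle in $W$ --- the cycle merely merges into the loop-thread --- so you must delete the $W$-cycles \emph{before} performing swaps and say so explicitly.
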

\begin{proof}
Let $\seqd$ be a degree sequence such that $d_1 \leq \sum_{i=2}^n d_i$. 
%
First, assume that $d$ 
can be realized by a multigraph $H$ with $\totmult(H)\le t$.
Let $F$ be the set of excess edges in $H$.
Construct a simple graph $G$ by replacing each edge $f = (x,y) \in F$
with two edges $(x,v_f)$ and $(y,v_f)$, where $v_f$ is a new vertex.
Clearly, this does not change the degrees of $x$ and $y$ and adds a vertex
$v_f$ of degree $2$. Hence the degree sequence of $G$ is $d \circ 2^{|F|}$.
Also, $G$ is simple.
If $|F| < t$, then one may replace any edge in $G$ with a path containing
$t-|F|$ edges, yielding a graph with degree sequence $d\circ 2^t$.

Conversely, suppose the sequence $\seqd \circ 2^t$ is graphic. 
Let $G$ be a simple graph that realizes the sequence.
Pick a degree $2$ vertex $v$ with neighbors $x$ and $y$, replace the edges
$(v,x)$ and $(v,y)$ with the edge $(x,y)$, and remove $v$ from $G$.
This transformation eliminates one degree 2 vertex from $G$
without changing the remaining degrees.
But it may increase the number of
excess edges by one (if the edge $(x,y)$ already exists in $G$).
Performing this operation for $t$ times, we obtain a multigraph $H$ with
$\totmult(H) \leq t$ and degree sequence $d$. 
\end{proof}

The next corollary follows readily with Theorems~\ref{thm:graphic} and \ref{thm:totmult}.
\begin{corollary}
\label{coro:totmultEG-pre}
Let $t$ be a positive integer, and let $d' = d \circ 2^t$. 
A sequence $d$ is $t$-\totg if and only if, for $\ell = 1, \ldots, n+t$,  
\begin{equation}
\label{eq:totmultEG}
\sum_{i=1}^{\ell} d'_i 
\leq \ell (\ell-1)  + \sum_{i = \ell + 1}^{n+t} \min\{\ell,d'_i\} 
~.
\end{equation}
\end{corollary}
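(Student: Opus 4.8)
The plan is to obtain \Cref{coro:totmultEG-pre} by composing the two equivalences already in hand: \Cref{thm:totmult}, which translates low total multiplicity into graphicness of the padded sequence, and \Cref{thm:graphic}, which translates graphicness into the Erd\"os--Gallai inequalities. First I would check that $d' = d \circ 2^t$ is a legitimate degree sequence of length $n+t$: its entries are positive integers, the operator $\circ$ keeps them non-increasing, and $\vol{d'} = \vol{d} + 2t$ is even since $\vol{d}$ is even. Hence \Cref{thm:graphic} applies verbatim to $d'$ and asserts that $d'$ is graphic if and only if $\sum_{i=1}^{\ell} d'_i \le \ell(\ell-1) + \sum_{i=\ell+1}^{n+t} \min\{\ell, d'_i\}$ holds for every $\ell = 1, \ldots, n+t$, which is exactly \eqref{eq:totmultEG}.

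It then remains to couple ``$d$ is $t$-\totg'' with ``$d'$ is graphic'', and this is where \Cref{thm:totmult} enters. For the forward direction I would argue that if $d$ is $t$-\totg then $\totmult(d) \le t$ is finite, so $d$ admits at least one multigraph realization; by the Owens--Trent realizability criterion this already forces $d_1 \le \sum_{i=2}^n d_i$, after which \Cref{thm:totmult} yields that $d'$ is graphic, and hence \eqref{eq:totmultEG} holds. So the forward implication needs no side hypothesis: the realizability condition comes for free from the existence of a realization.

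The reverse direction is the delicate step, and I expect it to be the main obstacle. From \eqref{eq:totmultEG} one obtains (via \Cref{thm:graphic}) only that $d'$ is graphic; to convert this into ``$d$ is $t$-\totg'' one must invoke \Cref{thm:totmult}, whose hypothesis $d_1 \le \sum_{i=2}^n d_i$ is \emph{not} a consequence of \eqref{eq:totmultEG} alone. Padding with enough $2$'s can make the inequalities hold for $d'$ even when $d$ has no multigraph realization whatsoever: for $d = (5,1)$ and $t = 4$ the padded sequence $(5,2,2,2,2,1)$ is graphic, yet $d_1 = 5 > 1 = \sum_{i \ge 2} d_i$, so $\totmult(d) = \infty$. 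I would therefore carry the standing assumption $d_1 \le \sum_{i=2}^n d_i$ of \Cref{thm:totmult} into the statement of \Cref{coro:totmultEG-pre}; under this assumption the reverse implication closes at once, since \Cref{thm:totmult} gives ``$d'$ graphic $\Rightarrow d$ is $t$-\totg''. The entire argument then reduces to the chain $d \text{ is } t\text{-\totg} \iff d' \text{ graphic} \iff \eqref{eq:totmultEG}$, with the realizability hypothesis being precisely what guarantees the middle equivalence in both directions.
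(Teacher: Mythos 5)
Your proposal is correct and is essentially the paper's own argument: the paper derives \Cref{coro:totmultEG-pre} exactly as you do, by composing \Cref{thm:totmult} (low total multiplicity of $d$ is equivalent to graphicness of $d' = d \circ 2^t$) with \Cref{thm:graphic} applied to $d'$, the latter being legitimate since $d'$ is a non-increasing positive sequence of even volume $\vol{d} + 2t$. Your additional point about the hypothesis is well taken and, as far as the literal statement goes, correct: \Cref{coro:totmultEG-pre} as printed omits the condition $d_1 \le \sum_{i=2}^n d_i$ under which \Cref{thm:totmult} is stated, and your counterexample checks out --- $d = (5,1)$ is a degree sequence (positive entries, even volume) with no loopless multigraph realization at all, since two vertices joined only to each other must have equal degrees (equivalently, the Owens--Trent condition fails, $5 > 1$), yet $d' = (5,2,2,2,2,1)$ satisfies every inequality in \eqref{eq:totmultEG} (the tight case is $\ell = 1$, giving $5 \le 5$) and is indeed graphic, realized by a star at the degree-$5$ vertex together with two edges pairing up the degree-$2$ vertices. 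So the reverse implication genuinely requires the Owens--Trent condition as a standing assumption, which is evidently intended to be inherited from \Cref{thm:totmult}, while, as you observe, the forward implication obtains it for free from the mere existence of a realization with $\totmult(H) \le t$. With that hypothesis carried along, your proof is complete and identical in substance to the paper's.
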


%
Owens and Trent~\cite{owens1967determining} implicitly suggest to compute 
$\totmult(d)$ by computing the minimum $t$ such that $d \circ 2^t$ is graphic. 
Using binary search would lead to a running time of 
$\mathcal{O}(n \cdot \log(\totmult(d)))$. 

Several authors~\cite{TV03,ZZ92} noticed that the equations of Theorem~\ref{thm:graphic}
are not minimal.
For a degree sequence $d$ where $d_1 > 1$, let $\bidx{d} = max\{ i ~|~ d_i > i \}$.
If Equation~\eqref{eq:EG} holds for the index $\ell = \bidx{d}$, then it holds for index $\ell+1$. 
To see this, consider the equations for the two indices and compare the change in the 
left hand side (LHS) and right hand side (RHS).
Observe that the RHS increases at least by $(\ell+1) \cdot \ell - \ell \cdot (\ell-1) = 2\ell$ 
while the LHS only increases by $d_{\ell+1} \leq \ell$.   
It follows that Equation~\eqref{eq:EG} does not have to be checked for indices $\ell > \bidx{d}$.
If $d_1 = 1$, we define $\bidx{d} = 0$. Note that in this case $d$ is realized by a matching graph.

\begin{observation}[\cite{TV03,ZZ92}] 
\label{obs:graphic-box}	
A degree sequence $d$ is graphic 
if and only if, for $\ell = 1, \ldots, \bidx{d}$,
\begin{equation}
\label{eq:graphic-box}
\displaystyle \sum_{i=1}^\ell d_i \leq \ell(\ell-1)  + \sum_{i= \ell + 1}^n \min\{\ell,d_i\}
~.
\end{equation}
\end{observation}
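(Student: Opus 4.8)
The goal is to show that the Erd\H{o}s-Gallai inequalities \eqref{eq:EG} need only be checked for indices $\ell = 1, \ldots, \bidx{d}$, where $\bidx{d} = \max\{i \mid d_i > i\}$. Since \Cref{thm:graphic} already gives the characterization for all $\ell = 1, \ldots, n$, one direction is immediate: if $d$ is graphic, then \eqref{eq:EG} holds for every $\ell$, in particular for $\ell \le \bidx{d}$. So the content is the converse, and by \Cref{thm:graphic} it suffices to show that if \eqref{eq:EG} holds for all $\ell \le \bidx{d}$, then it automatically holds for all $\ell$ with $\bidx{d} < \ell \le n$.

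The plan is to prove an inductive ``propagation'' step: if \eqref{eq:EG} holds at some index $\ell \ge \bidx{d}$, then it holds at index $\ell + 1$. Chaining this from $\ell = \bidx{d}$ upward then covers every larger index. The excerpt already sketches the heart of the argument: fix $\ell \ge \bidx{d}$ and compare the two inequalities at $\ell$ and $\ell+1$ by tracking the change in each side. The left-hand side increases by exactly $d_{\ell+1}$. For the right-hand side, the quadratic term $\ell(\ell-1)$ grows to $(\ell+1)\ell$, contributing $2\ell$, while the capped-sum term may change as well. The key numerical fact is that for $\ell \ge \bidx{d}$ we have $d_{\ell+1} \le \ell$ (since $\bidx{d}$ is the last index where $d_i > i$, every index $i > \bidx{d}$ satisfies $d_i \le i$, and using monotonicity $d_{\ell+1} \le d_{\bidx{d}+1} \le \bidx{d}+1 \le \ell+1$; one has to be slightly careful to land on $d_{\ell+1}\le \ell$ rather than $\le \ell+1$, which I address below). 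Thus the LHS grows by at most $\ell$, whereas the $2\ell$ from the quadratic term alone already dominates, so the inequality is preserved.

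The step I expect to be the main obstacle is handling the $\min\{\ell, d_i\}$ caps in the right-hand summation carefully when passing from $\ell$ to $\ell+1$, since both the cap value and the range of summation change. Concretely, in moving to index $\ell+1$ the term for $i = \ell+1$ drops out of the sum (it is absorbed into the quadratic block), and simultaneously each surviving cap $\min\{\ell, d_i\}$ may rise to $\min\{\ell+1, d_i\}$. One must verify that the net effect on the RHS is a gain of at least $2\ell$ minus the dropped term $\min\{\ell, d_{\ell+1}\} = d_{\ell+1}$ (using $d_{\ell+1}\le\ell$), i.e.\ a net RHS increase of at least $2\ell - d_{\ell+1}$, while also accounting for the nonnegative contributions of the rising caps. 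Since the LHS rises by exactly $d_{\ell+1}$, the required inequality $d_{\ell+1} \le 2\ell - d_{\ell+1} + (\text{cap gains})$ reduces to $2 d_{\ell+1} \le 2\ell + (\text{cap gains})$, which follows from $d_{\ell+1}\le \ell$. I would state these bookkeeping relations as a short inline computation rather than a display, being explicit that $d_{\ell+1}\le \ell$ is exactly the consequence of $\ell \ge \bidx{d}$, and noting the boundary cases $d_1 = 1$ (where $\bidx{d}=0$ and $d$ is a matching, so \eqref{eq:EG} is vacuous/trivial) separately.
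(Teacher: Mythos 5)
Your route is the paper's route: the paper proves this observation by exactly the one-step propagation you describe (if \eqref{eq:EG} holds at an index $\ell \geq \bidx{d}$ then it holds at $\ell+1$, comparing the growth of the two sides). However, the pivotal numerical fact you lean on --- that $d_{\ell+1} \leq \ell$ is ``exactly the consequence of $\ell \geq \bidx{d}$'' --- is false at the one index where your induction must start. By definition of $\bidx{d}$, every $i > \bidx{d}$ satisfies $d_i \leq i$, which at $\ell = \bidx{d}$ yields only $d_{\ell+1} \leq \ell+1$, and equality really occurs: for $d = (3,3,3,3)$ we have $\bidx{d} = 2$ and $d_3 = 3 = \ell + 1$. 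You flagged this off-by-one yourself (``one has to be slightly careful\dots which I address below''), but the promised resolution never materializes --- the later sentence simply re-asserts the unproved bound. For $\ell \geq \bidx{d}+1$ there is no problem, since monotonicity gives $d_{\ell+1} \leq d_{\bidx{d}+1} \leq \bidx{d}+1 \leq \ell$; the gap is precisely the first step.

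The boundary step cannot be waved away, because when $d_{\ell+1} = \ell+1$ at $\ell = \bidx{d}$ your own bookkeeping loses its slack: the LHS grows by $\ell+1$, the dropped cap is $\min\set{\ell, d_{\ell+1}} = \ell$, so the RHS grows by exactly $\ell + g$, where $g = \abs{\set{i \geq \ell+2 \mid d_i = \ell+1}}$ is the cap gain. If $g = 0$, the difference LHS$\,-\,$RHS increases by exactly $1$, so propagation could only fail when \eqref{eq:EG} holds with \emph{equality} at $\ell$; and in that tight configuration ($d_{\ell+1} = \ell+1$ and $d_i \leq \ell$ for $i \geq \ell+2$) equality reads $\sum_{i=1}^{\ell} d_i = \ell^2 + \sum_{i \geq \ell+2} d_i$, whence $\vol{d} = \ell^2 + \ell + 1 + 2\sum_{i \geq \ell+2} d_i$ is odd --- impossible for a degree sequence. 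So the observation survives, but only via this parity argument (or via $g \geq 1$ when some later degree also equals $\ell+1$), neither of which appears in your write-up. In fairness, the paper's own two-line justification is equally loose --- it too asserts $d_{\ell+1} \leq \ell$, and claims the RHS grows by at least $2\ell$ while ignoring the dropped cap term, which your accounting actually handles more carefully --- but a complete proof must treat the index $\ell = \bidx{d}$ with $d_{\ell+1} = \ell+1$ as a separate case, and yours does not.
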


On a side note,  it is also known that only up to $k$ many equations have to be checked where $k$ is the number of different degrees of a sequence (cf.~\cite{miller2013reduced,TV03,ZZ92}).

Observation~\ref{obs:graphic-box} helps to simplify Corollary~\ref{coro:totmultEG-pre}.

\begin{corollary}
\label{coro:totmultEG}
Let $t$ be a positive integer. A degree sequence $d$ 
is $t$-\totg 
if and only if,  for $\ell = 1, \ldots, \bidx{d}$, 
\begin{equation}
\label{eq:totmultEG-box}
\sum_{i=1}^{\ell} d_i 
\leq \ell (\ell-1)  + \sum_{i= \ell + 1}^n \min\{\ell,d_i\} + t \cdot \min\set{\ell,2}.
\end{equation}
\end{corollary}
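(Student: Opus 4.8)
The plan is to reduce the statement to the characterizations already in hand by passing through the padded sequence $d' = d \circ 2^t$. By Theorem~\ref{thm:totmult} (equivalently, by Corollary~\ref{coro:totmultEG-pre}), $d$ is $t$-\totg if and only if $d'$ is graphic, and by Observation~\ref{obs:graphic-box} applied to $d'$, the latter holds if and only if inequality~\eqref{eq:graphic-box} is valid for $d'$ at every index $\ell = 1, \ldots, \bidx{d'}$. It therefore suffices to establish two facts: (a) that $\bidx{d'} = \bidx{d}$, so the indices to be checked are exactly $\ell = 1, \ldots, \bidx{d}$; and (b) that for each such $\ell$, inequality~\eqref{eq:graphic-box} written for $d'$ is literally~\eqref{eq:totmultEG-box} written for $d$.

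The crux of both facts is to pin down where the $t$ appended $2$'s land after re-sorting. First I would record that, since $d_i - i$ is strictly decreasing, the set $\{i : d_i > i\}$ is the initial segment $\{1, \ldots, \bidx{d}\}$; hence for every $\ell \le \bidx{d}$ we have $d_\ell > \ell$, so $d_\ell \ge 2$, with $d_\ell \ge 3 > 2$ whenever $\ell \ge 2$. Consequently the $\ell$ largest entries of $d'$ are exactly $d_1, \ldots, d_\ell$, and all appended $2$'s occupy positions strictly greater than $\ell$. Thus the left-hand side of~\eqref{eq:graphic-box} for $d'$ equals $\sum_{i=1}^{\ell} d_i$, while the tail multiset $\{d'_{\ell+1}, \ldots, d'_{n+t}\}$ is precisely $\{d_{\ell+1}, \ldots, d_n\}$ together with $t$ copies of $2$. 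By additivity of $x \mapsto \min\{\ell, x\}$ over this multiset union, the sum $\sum_{i=\ell+1}^{n+t} \min\{\ell, d'_i\}$ splits as $\sum_{i=\ell+1}^{n} \min\{\ell, d_i\} + t \cdot \min\{\ell, 2\}$, which is exactly the extra term in~\eqref{eq:totmultEG-box}; this gives fact (b). (The one borderline case, $\ell = 1$ with $d_1 = 2$, is harmless, since exchanging the entry $d_1$ for an appended $2$ leaves the relevant multiset unchanged.)

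For fact (a), I would again use that $d'_i - i$ is strictly decreasing, so $\bidx{d'}$ equals the last index at which $d'_i > i$. The placement argument gives $d'_i = d_i$ for $i \le \bidx{d}$, hence $d'_{\bidx{d}} = d_{\bidx{d}} > \bidx{d}$ and $\bidx{d'} \ge \bidx{d}$. Conversely, the $(\bidx{d}+1)$-th largest entry of $d'$ is either a remaining entry $d_j$ with $j > \bidx{d}$, which satisfies $d_j \le d_{\bidx{d}+1} \le \bidx{d}+1$ by the definition of $\bidx{d}$, or an appended $2$, which is $\le \bidx{d}+1$ because $\bidx{d} \ge 1$; in either case $d'_{\bidx{d}+1} \le \bidx{d}+1$, so $\bidx{d'} = \bidx{d}$. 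Combining (a) and (b) with Observation~\ref{obs:graphic-box} and Theorem~\ref{thm:totmult} yields the corollary. I expect the main obstacle to be precisely this bookkeeping of where the appended $2$'s fall, together with the degenerate case $\bidx{d} = 0$ (that is, $d_1 = 1$, so $d = 1^n$ with $n$ even): there the index range is empty and the equivalence must be checked directly, observing that such a $d$ is realized exactly by a perfect matching and is therefore $t$-\totg for every $t$, in agreement with the vacuously satisfied condition.
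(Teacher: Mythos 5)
Your proposal is correct and follows essentially the same route as the paper's proof: pass to the padded sequence $d' = d \circ 2^t$, combine Theorem~\ref{thm:totmult} with the reduced Erd\"os--Gallai conditions of Observation~\ref{obs:graphic-box}, check that $\bidx{d'} = \bidx{d}$ and that those conditions for $d'$ are literally~\eqref{eq:totmultEG-box}, and handle $d_1 = 1$ (empty index range, matching realization) separately. The paper compresses all of this into ``one can verify''; your placement argument for where the appended $2$'s land after sorting, including the borderline case $\ell = 1$ with $d_1 = 2$ and the verification $\bidx{d'} = \bidx{d}$, is exactly the omitted bookkeeping, spelled out correctly.
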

\begin{proof}
Let $d$ and $t$ be as in the corollary.
In case $d_1 = 1$, the sequence $d$ is graphic, i.e., it is $t$-\totg for any positive integer $t$.

Hence, assume that $d_1 > 1$. Also, let $d' = \seqd \circ 2^t$.
One can verify that Equations~\eqref{eq:totmultEG-box} are the (reduced) Erd\"os-Gallai 
inequalities of Observation~\ref{obs:graphic-box} for $d'$. 
Moreover, $\bidx{d} = \bidx{d'}$, and the claim follows. 
\end{proof}

Corollary~\ref{coro:totmultEG} implies a simple algorithm to compute $\totmult(d)$.
Let
\[
\Delta_\ell(d) = \sum_{i=1}^{\ell} d_i - (\ell (\ell-1)  + \sum_{i= \ell + 1}^n \min\{\ell,d_i\}),
\]
for $\ell = 1, \ldots, n$, be the Erd\"os-Gallai differences of a 
sequence $d$. 
Hence, $\totmult(d) = \max\{ \Delta_1, \Delta_{\text{max}}/2 \}$, 
where $\Delta_{\text{max}}(d) = \max_{2 \leq \ell \leq \bidx{d}} \Delta_{\ell}(d)$,
implying a $\mathcal{O}(n)$ algorithm to calculate $\totmult(d)$.


\subsection{Bipartite Multigraphs}

In this section, we start investigating 
whether a degree sequence has a bipartite realization, i.e., if it is \emph{bigraphic} or not.
Particularly, we are interested in multigraph realizations where the underlying graph is bipartite.

Let $d$ be a degree sequence such that $\vol{d} = 2m$ for some integer $m$.
A \emph{block} of $d$ is a subsequence $a$ such that $\vol{a} = m$.
Define the set of blocks as $B(d)$. 
%
For each $a \in B(d)$ there is a disjoint $b \in B(d)$ such that $d = a \circ b$.
We call such a pair $a,b \in B(d)$ a balanced \emph{partition} of $d$ since $\vol{a} = \vol{b}$.
Denote the set of all balanced partitions of $d$ by 
$\bp(d) = \set{ \set{a,b} ~ | ~ a,b \in B(d),~ a \circ b = d  }$.
We say a partition $(a,b) \in \bp(d)$ is \emph{bigraphic} if 
there is a bipartite realization $G = (A,B,E)$ of $d$ 
such that $\deg(A) = a$ and $\deg(B) = b$ are the vertex-degree sequences of $A$ and $B$, respectively.

Observe that, as in the case of general graphs, the best realization in terms of maximum
 multiplicity is not necessarily the same as the best one in terms of total multiplicity. 
See \Cref{sec:gap}.

Note that not every graphic sequence has a balanced partition.
Yet, if $d$ is bigraphic, then $\bp(d)$ is not empty. 
The Gale-Ryser theorem characterizes when a partition is bigraphic.  
\begin{theorem}[Gale-Ryser~\cite{gale1957theorem,ryser1957combinatorial}]
\label{thm:Gale-Ryser}
Let $d$ be a degree sequence and partition $(a,b) \in \bp(d)$ where
$a = (a_1,a_2, \ldots, a_p)$ and $b = (b_1,b_2, \ldots,b_q)$.
The partition $(a,b)$ is bigraphic if and only if, for $\ell = 1, \ldots, p$,
\begin{equation}
\label{eq:GR}
\displaystyle \sum_{i=1}^{\ell} a_i \leq \sum_{i=1}^{q} \min\{\ell, b_i\}
~.
\end{equation}
\end{theorem}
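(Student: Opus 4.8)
The plan is to reformulate the partition $(a,b)$ as the existence of a $p \times q$ zero--one matrix $M$, namely the bi-adjacency matrix of the sought graph $G=(A,B,E)$: $M_{ij}=1$ encodes an edge between the $i$-th vertex of $A$ and the $j$-th vertex of $B$, the $i$-th row sum equals $a_i$, and the $j$-th column sum equals $b_j$. The partition is bigraphic precisely when such a matrix exists. I would prove the two directions separately; necessity is the easy counting direction, and sufficiency (existence) is where the real work lies.

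For necessity, fix $\ell \in \{1,\dots,p\}$ and consider the $\ell$ heaviest $A$-vertices, i.e.\ rows $1,\dots,\ell$ of $M$. The total number of ones in these rows is $\sum_{i=1}^\ell a_i$. On the other hand, column $j$ can contain at most $\min\{\ell,b_j\}$ ones among these rows: at most $\ell$ since only $\ell$ rows are involved, and at most $b_j$ since that is the total number of ones in the column. Summing over $j$ yields $\sum_{i=1}^\ell a_i \le \sum_{j=1}^q \min\{\ell,b_j\}$, which is exactly \eqref{eq:GR}.

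For sufficiency I would use a max-flow argument, which has the advantage of yielding the inequalities back out of the min-cut while automatically supplying integrality. Build a network with source $s$, a node $u_i$ for each $A$-vertex with arc $s\to u_i$ of capacity $a_i$, a node $w_j$ for each $B$-vertex with arc $w_j\to t$ of capacity $b_j$, and a unit-capacity arc $u_i\to w_j$ for every $i,j$. An integral $s$--$t$ flow of value $\vol{a}$ saturates all source arcs; since $\vol{a}=\vol{b}$ it also saturates all sink arcs, so its values on the middle layer form a zero--one matrix with the prescribed margins, i.e.\ a realization. By integral-capacity max-flow/min-cut, such a flow exists iff every cut has capacity at least $\vol{a}$. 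A cut is determined by the set $S\subseteq[p]$ of source-side row nodes together with a set $T\subseteq[q]$ of source-side column nodes, and its capacity is $\sum_{i\notin S}a_i + |S|\,(q-|T|) + \sum_{j\in T}b_j$. Minimizing over $T$ for fixed $S$ (place column $j$ on the source side exactly when $b_j\le|S|$) collapses the last two terms to $\sum_j \min\{|S|,b_j\}$, so the cut condition becomes $\sum_{i\in S}a_i \le \sum_j \min\{|S|,b_j\}$.

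The main obstacle is the final reduction from \emph{all} $S$ to \emph{prefix} $S$: I must argue that for each fixed cardinality $\ell=|S|$ the most binding constraint is the one where $S$ consists of the $\ell$ largest $a_i$. This holds because the right-hand side $\sum_j\min\{\ell,b_j\}$ depends on $S$ only through $\ell$, while the left-hand side $\sum_{i\in S}a_i$ is maximized, for fixed cardinality, by the $\ell$ largest entries since $a$ is non-increasing. Hence checking \eqref{eq:GR} for the prefixes $\ell=1,\dots,p$ certifies all cuts, the flow exists, and the realization follows. An alternative, purely combinatorial route avoiding flows is a bipartite Havel--Hakimi reduction, saturating the largest degree against the currently largest residual degrees on the opposite side and inducting; there the crux is instead a preservation lemma showing that \eqref{eq:GR} survives one reduction step, which one establishes by an exchange/swap argument on edges.
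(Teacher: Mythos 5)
Your proof is correct and complete: the counting argument gives necessity, the three-term cut capacity $\sum_{i\notin S}a_i + |S|(q-|T|) + \sum_{j\in T}b_j$ is computed correctly, the optimization over $T$ (putting $j$ on the source side exactly when $b_j\le|S|$) collapses it to $\sum_j\min\{|S|,b_j\}$, and the reduction to prefix sets $S$ via monotonicity of $a$ is exactly the right observation; integrality of the resulting matrix is supplied by the integral max-flow theorem, and saturation of the sink arcs uses the balance condition $\sum a = \sum b$ built into $\bp(d)$, which you correctly invoke. Note that the paper itself states this theorem without proof, citing Gale and Ryser, so there is no internal proof to compare against; for what it is worth, your network-flow route is essentially Gale's original argument (his 1957 paper is literally a theorem on flows in networks), while Ryser's proof proceeded via 0--1 matrices and interchange operations, closer in spirit to the Havel--Hakimi-style alternative you sketch at the end.
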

We point out that Theorem~\ref{thm:Gale-Ryser} does not characterize bigraphic degree sequences.
Indeed, if the partition is not specified, it is not known how to determine whether a graphic sequence is bigraphic or not.
There are sequences where some partitions are bigraphic while others are not. 
Moreover, $|\bp(d)|$ might be exponentially large in the input size $n$.

We turn back to approximate realizations by bipartite multigraphs.
A multigraph is bipartite if its underlying graph is bipartite.
Analogue to above, we use the maximum and total multiplicity to measure the quality of a realization. 
Naturally, let 
\[
\maxmultbi(d) 
\triangleq \min\{ \maxmult(H) \mid H \text{ is bipartite and realizes }  d \}
~.
\]
For a partition $(a,b) \in \bp(d)$, we define 
\[
\maxmultbi(a,b) 
\triangleq \min\{ \maxmult(H) \mid H = (A,B,E) \text{ s.t. } \deg(A) = a \text{ and } \deg(B) = b \}
~.
\]

Let $r$ be a positive integer.
If there is a bipartite multigraph $H = (A , B, E)$ 
where $\maxmult(H) \leq r$, we say that $d$ is $r$-\maxbi. 
Moreover, we say that the partition $(a,b) \in \bp(d)$, 
where $a = \deg(A)$ and $b = \deg(B)$, is $r$-\maxbi.
Miller~\cite{miller2013reduced} cites the following result of Berge characterizing $r$-\maxbi partitions.

\begin{theorem}[Berge~\cite{miller2013reduced}]
\label{thm:r-multi-bi-graph}	
Let $r$ be a positive integer.
Consider a degree sequence $d$ and a partition $(a,b) \in \bp(d)$,
where $a = (a_1, \ldots, a_p)$ and $b = (b_1, \ldots, b_q)$.
Then $(a,b)$ is $r$-\maxbi if and only if,
for $\ell = 1, \ldots, p$, 
\begin{equation}
\label{eq:r-multi-bi-graph}	
\displaystyle \sum_{i=1}^{\ell} a_i \leq \sum_{i= 1}^q \min\{\ell r ,b_i\}
~.
\end{equation}
\end{theorem}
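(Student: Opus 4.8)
The plan is to reduce Berge's $r$-\maxbi characterization to the Gale-Ryser theorem (\Cref{thm:Gale-Ryser}) by a ``splitting'' construction, much in the spirit of how \Cref{thm:totmult} reduced total multiplicity to ordinary graphicness. The key observation is that allowing up to $r$ parallel copies of an edge between the $A$-side and $B$-side is equivalent, after refining the $B$-side, to an ordinary bipartite simple graph. Concretely, I would replace each vertex $w \in B$ of degree $b_j$ by $r$ copies $w^{(1)}, \ldots, w^{(r)}$, distributing the incident edges so that no two parallel copies of an original edge land on the same copy $w^{(s)}$. A multigraph $H = (A,B,E)$ with $\maxmult(H) \le r$ then corresponds to a simple bipartite graph on the blown-up sides, and conversely. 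The degree constraint on the $A$-side is unchanged, while each $w$ is split so that its $r$ copies have degrees summing to $b_j$; the per-copy degree is at most the number of distinct $A$-neighbors, which is at most $\ell$ when we look at the first $\ell$ vertices of $A$. This is exactly what turns $\min\{\ell, b_i\}$ in \eqref{eq:GR} into $\min\{\ell r, b_i\}$ in \eqref{eq:r-multi-bi-graph}.

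First I would fix the partition $(a,b)$ and prove the forward direction: assuming an $r$-\maxbi realization $H$ exists, I would show \eqref{eq:r-multi-bi-graph} holds. For any $\ell$, consider the first $\ell$ vertices $v_1, \ldots, v_\ell$ of $A$. The number of edge-slots they contribute is $\sum_{i=1}^\ell a_i$. Each such edge goes to some $w_j \in B$, and since $\maxmult(H)\le r$, at most $r$ of these edges can go to any single $w_j$ from a single $v_i$; aggregating over the $\ell$ chosen vertices, the multiset of edges landing on $w_j$ has size at most $\min\{\ell r, b_j\}$ (bounded by $\ell r$ because there are $\ell$ sources each contributing at most $r$ copies, and by $b_j$ because that is $w_j$'s total degree). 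Summing over $j$ gives the right-hand side of \eqref{eq:r-multi-bi-graph}, establishing necessity.

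For the converse, I would assume \eqref{eq:r-multi-bi-graph} holds and construct the realization. The cleanest route is to form an auxiliary partition $(a, b')$ where $b'$ is obtained from $b$ by a splitting that makes the Gale-Ryser inequalities \eqref{eq:GR} hold for $(a,b')$, then invoke \Cref{thm:Gale-Ryser} to get a simple bipartite graph, and finally merge the copies $w^{(1)},\ldots,w^{(r)}$ back into $w$ to recover a multigraph with at most $r$ copies of any edge. The technical content is choosing the split of each $b_j$ into at most $r$ parts so that, after sorting, the Gale-Ryser inequalities for $(a,b')$ are implied term-by-term by \eqref{eq:r-multi-bi-graph}; the natural choice is to split $b_j$ as evenly as possible into $r$ parts (sizes $\lceil b_j/r\rceil$ and $\lfloor b_j/r\rfloor$), since then $\sum_{s}\min\{\ell, (b')_s\}$ telescopes to $\min\{\ell r, b_j\}$ when summed across the copies of a single $w_j$. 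One must also verify that merging back does not create more than $r$ copies of any edge, which holds because distinct copies $w^{(s)}$ are simple-graph neighbors of distinct edges, so each original pair $(v_i, w_j)$ accumulates at most one edge per copy, hence at most $r$ total.

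I expect the main obstacle to be the converse direction, specifically verifying that the chosen split of $b$ into $b'$ preserves the inequalities in both directions: one must check that \eqref{eq:r-multi-bi-graph} for $(a,b)$ is \emph{equivalent} to \eqref{eq:GR} for $(a,b')$, not merely implied by it, and that the re-sorting of $b'$ into non-increasing order does not disturb the prefix sums on the $A$-side (it does not, since the $A$-side is untouched, but the $\min\{\ell, (b')_s\}$ terms must be handled carefully across the sort). The identity $\sum_{s=1}^{r}\min\{\ell, c_s\} = \min\{\ell r, b_j\}$ for the even split $c_1, \ldots, c_r$ of $b_j$ is the crux, and while it is a routine case analysis on whether $b_j \lessgtr \ell r$, getting the boundary cases right (and confirming that no other split could violate an inequality that the even split satisfies) is where the care is needed. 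Everything else is bookkeeping that parallels the Gale-Ryser proof.
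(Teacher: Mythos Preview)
The paper does not actually prove \Cref{thm:r-multi-bi-graph}; it is stated as a known result of Berge (cited via Miller~\cite{miller2013reduced}) and used as a black box. So there is no ``paper's own proof'' to compare against.

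That said, your proposed argument is correct and is essentially the standard way to derive Berge's criterion from Gale--Ryser. The forward direction is a clean double-counting argument and needs no change. For the converse, your even-split reduction is exactly right: splitting each $b_j$ into $r$ parts of sizes $\lceil b_j/r\rceil$ and $\lfloor b_j/r\rfloor$ yields a sequence $b'$ (of length $qr$, possibly with some zero entries) for which the identity
\[
\sum_{s=1}^{r}\min\{\ell, c_s\} = \min\{\ell r, b_j\}
\]
holds for every $\ell$ (a two-case check on whether $\ell \le \lfloor b_j/r\rfloor$ or $\ell \ge \lceil b_j/r\rceil$, which exhausts the integers). Summing over $j$ shows that the Gale--Ryser inequalities~\eqref{eq:GR} for $(a,b')$ are \emph{identical} to~\eqref{eq:r-multi-bi-graph} for $(a,b)$, since $\sum_s \min\{\ell,b'_s\}$ is invariant under reordering $b'$. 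Invoking \Cref{thm:Gale-Ryser} gives a simple bipartite realization of $(a,b')$, and merging the $r$ copies of each $w_j$ back into one vertex produces a bipartite multigraph realizing $(a,b)$ with at most $r$ parallel edges between any pair, since the pre-merge graph was simple.

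Two minor remarks. First, the worry you flag at the end about ``other splits'' is not needed: you only need to exhibit \emph{one} split for which Gale--Ryser applies, and the even split does the job via the identity above. Second, zero parts in $b'$ (when $b_j < r$) are harmless; they correspond to isolated vertices and contribute nothing to either side of~\eqref{eq:GR}.
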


Note the similarity to the Gale-Ryser theorem. Theorem~\ref{thm:r-multi-bi-graph} implies 
that $\maxmultbi(a,b)$ can be computed in $\mathcal{O}(n \cdot \log(d_1) )$ using binary search.

For the second approximation criterion, we bound the total multiplicity of a bipartite multigraph realization.
Define
\[
\totmultbi(d) 
\triangleq \min\{ \totmult(H) \mid H \text{ is bipartite and realizes }  d \}
~.
\]
Additionally, for a partition $(a,b) \in \bp(d)$, we define
\[
\totmultbi(a,b) 
\triangleq \min\{ \totmult(H) \mid H = (A,B,E) \text{ s.t. } \deg(A) = a \text{ and } \deg(B) = b \}
~.
\]

We present our results on determining $\totmultbi(a,b)$ in the next section. 
In Sections~\ref{sec:single-sequence-hardness} and \ref{sec:single-sequence-small-degree}, 
we consider $\maxmultbi(d)$ and $\totmultbi(d)$.


\section{Multigraph Realizations of Bi-sequences}
\label{sec:bitotmult}

In this section, we are interested in bipartite multigraph
realizations with low total multiplicity, assuming that we are given a sequence 
and a specific balanced partition.
First, we provide a characterization similar to \Cref{thm:totmult}
for bipartite multigraph realizations for a given partition.

\begin{theorem}
\label{thm:totmult-bipartite}
Let $d$ be a degree sequence and $t$ be a positive integer.
Then, $d$ is $t$-\totbi
if and only if 
there exists a partition $(a,b) \in \bp(\seqd)$ such that $(a \circ (1^t), b \circ (1^t))$ is bigraphic.
\end{theorem}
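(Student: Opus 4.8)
The plan is to mirror the proof of \Cref{thm:totmult}, but carry the bipartite structure through both directions by splitting each excess edge onto the \emph{opposite} side of the bipartition, and by deleting the auxiliary vertices in a way that respects the two sides. The key observation that makes this work is that an excess edge $(x,y)$ in a bipartite multigraph already has its endpoints on opposite sides, say $x \in A$ and $y \in B$; so when we want to replace it by a path of degree-$1$ vertices, the path must alternate between the two sides, and to keep the resulting simple graph bipartite we must add exactly one degree-$1$ vertex to $A$ and one to $B$ per excess edge (a path $x - y' - x' - y$ with $y' \in B$, $x' \in A$), which is precisely why the statement appends $1^t$ to \emph{both} sides rather than $2^t$ to one sequence.

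For the forward direction I would start from a bipartite multigraph $H = (A,B,E)$ realizing $d$ with $\totmult(H) \le t$, inducing a partition $(a,b) \in \bp(d)$ with $a = \deg(A)$, $b = \deg(B)$. Let $F$ be the multiset of excess edges, $|F| \le t$. For each excess edge $f = (x,y)$ with $x \in A$, $y \in B$, introduce two new vertices $u_f \in A$ and $w_f \in B$ and replace $f$ by the two edges $(x, w_f)$ and $(u_f, y)$. This removes one excess copy, keeps the graph bipartite, preserves the degrees of $x$ and $y$, and adds one degree-$1$ vertex to each side. After processing all of $F$ we obtain a simple bipartite graph realizing $(a \circ 1^{|F|}, b \circ 1^{|F|})$. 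If $|F| < t$ I would pad up to $t$ by the usual edge-subdivision trick, taking care to add the two new degree-$1$ vertices on opposite sides so the padded sequence is exactly $(a \circ 1^t, b \circ 1^t)$ and the graph stays bipartite; this gives a bigraphic partition as required.

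For the converse I would take a simple bipartite realization $G$ of $(a \circ 1^t, b \circ 1^t)$ for some partition $(a,b) \in \bp(d)$. Now the $t$ degree-$1$ vertices on each side must be \emph{paired up and contracted} rather than deleted one at a time: pick a degree-$1$ vertex $u \in A$ (one of the padded ones) with its unique neighbor $y \in B$, and a degree-$1$ vertex $w \in B$ with its unique neighbor $x \in A$; delete $u$ and $w$ and add the edge $(x,y)$. This removes two auxiliary vertices, keeps $x \in A$ and $y \in B$ on opposite sides so bipartiteness is preserved, leaves the degrees of $x,y$ unchanged, and increases the excess-edge count by at most one. Iterating $t$ times eliminates all $2t$ padded vertices and yields a bipartite multigraph $H$ realizing $d$ with $\totmult(H) \le t$.

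\textbf{The main obstacle} I anticipate is the converse, specifically the bookkeeping in the contraction step: one must argue that the degree-$1$ vertices can always be matched across the two sides $t$-to-$t$, which is guaranteed because exactly $t$ were appended to each side, and one must verify that each contraction adds at most one excess copy (it adds one only if $(x,y)$ was already present). A subtle point to handle is the degenerate case where the chosen $u$ and $w$ share a neighbor or are adjacent to each other, or where $x = y$ is forced — but since $x \in A$ and $y \in B$ lie on opposite sides, $x \ne y$ always, and the edge $(x,y)$ is a legitimate bipartite edge, so the construction never creates a self-loop and never leaves the bipartite class. I would also note in passing that, unlike the general case, no analogue of the $d_1 \le \sum_{i \ge 2} d_i$ hypothesis is needed here, since the existence of any partition in $\bp(d)$ is subsumed by the requirement on the right-hand side of the statement.
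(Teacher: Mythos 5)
Your proof is correct and takes essentially the same route as the paper's: in the forward direction you split each excess edge across the bipartition via two new degree-one vertices on opposite sides, and in the converse you pair the $t$ degree-one vertices on each side across the bipartition and contract, adding at most one excess copy per pair. You are in fact slightly more careful than the paper (which tacitly assumes $\abs{F} = t$ and does not mention the degenerate case where two chosen degree-one vertices form an isolated edge, resolved by simply deleting that edge with its endpoints); the only wording to fix is that ``edge subdivision'' is not the right padding operation --- subdividing adds a degree-$2$ vertex and breaks bipartiteness --- so pad instead by applying the same excess-edge split to a non-excess edge, or by adding a disjoint edge with one new endpoint on each side.
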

\begin{proof}
Let $\seqd$, $t$ be as in the theorem.
Assume that there is a bipartite multigraph $H = (L, R, E)$
with $\totmult(H) \leq t$. Hence, there is a partition $(a,b) \in \bp(d)$
where $\deg(L) = a$ and $\deg(R) = b$.
Let $F$ be the set of excess edges in $H$.
Construct a bipartite graph $G$ by
applying the following transformation. For every excess edge $(x,y) \in F$,
add a new vertex $x_e$ to $A$ and a new vertex $y_e$ to $B$,
and replace $(x,y)$ by the two edges $(x,y_e)$ and $(x_e,y)$.
Note that $x_e$ and $y_e$ are placed on opposite partitions of $G$.
Since there are $t$ excess edges, $G$ realizes $(a \circ 1^t, b \circ 1^t)$ 
without excess edges.
	
For the other direction, assume that there exists a partition
$(a,b) \in \bp(\seqd)$ such that $(a \circ (1^t), b \circ (1^t))$ is realized
by a bipartite graph $G = (L,R,E)$.
Let $x_1, \ldots, x_t$ and $y_1, \ldots, y_t$ be some vertices of degree one
in $L$ and $R$, respectively.
Also, for every $i$, let $y'_i$ (respectively, $x'_i$) be the only neighbor
of $x_i$ (resp., $y_i$).
Construct a bipartite multigraph $H$ by replacing the edges $(x_i,y'_i)$
and $(x'_i,y_i)$ with the edge $(x'_i,y'_i)$ and discarding the vertices
$x_i$ and $y_i$, for every $i$. Since this transformation may add up to
$t$ excess edges, we have that $\totmult(H) \leq t$. 
\end{proof}

The above characterization leads to extended Gale-Ryser conditions for total multiplicity.

\begin{theorem}
\label{thm:totmult-bi-graph}
Let $d$ be a degree sequence with partition $(a,b) \in \bp(\seqd)$,
where $a = (a_1, \ldots, a_p)$ and $b = (b_1, \ldots, b_q)$, 
and let $t$ be a positive integer. 
The partition $(a,b)$ is $t$-\totbi 
if and only if,
for all $\ell = 1,\ldots,p$,
\begin{equation}
\label{eq:totmult-bi-graph}	
\sum_{i=1}^{\ell} a_i \leq \sum_{i= 1}^q \min\{\ell ,b_i\} + t 
~.
\end{equation}
\end{theorem}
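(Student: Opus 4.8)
The plan is to combine the two previous results to reduce the problem to a routine algebraic manipulation. By \Cref{thm:totmult-bipartite}, the partition $(a,b)$ is $t$-\totbi if and only if the padded partition $(a \circ (1^t), b \circ (1^t))$ is bigraphic. So the natural approach is to write down the Gale-Ryser inequalities of \Cref{thm:Gale-Ryser} for this padded partition and show that they are equivalent to the claimed inequalities~\eqref{eq:totmult-bi-graph} for the original $(a,b)$.

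First I would set $a' = a \circ (1^t)$ and $b' = b \circ (1^t)$, noting that $a'$ has $p' = p+t$ entries and $b'$ has $q+t$ entries, and that since $a$ is non-increasing with entries at least $1$, the $t$ appended ones occupy positions $p+1, \ldots, p+t$ of $a'$ (and similarly for $b'$). Then I would write the Gale-Ryser condition for $(a',b')$: for every $\ell = 1, \ldots, p+t$,
\[
\sum_{i=1}^{\ell} a'_i \leq \sum_{i=1}^{q+t} \min\{\ell, b'_i\}.
\]
The key observation is how the right-hand side decomposes. For the first $q$ entries, $b'_i = b_i$, contributing $\sum_{i=1}^{q} \min\{\ell, b_i\}$; for the last $t$ entries, $b'_i = 1$, and since $\ell \geq 1$ we have $\min\{\ell, 1\} = 1$, contributing exactly $t$. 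Thus the RHS is always $\sum_{i=1}^{q} \min\{\ell, b_i\} + t$, independent of how large $\ell$ is.

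Next I would argue that only the indices $\ell \leq p$ need to be checked. For indices $\ell$ with $p < \ell \leq p+t$, each increment of $\ell$ by one adds a $1$ to the LHS (since $a'_\ell = 1$ there), while the RHS is constant in this range; hence the tightest of these constraints is the one at $\ell = p+t$, which reads $\sum_{i=1}^{p} a_i + t \leq \sum_{i=1}^{q}\min\{p,b_i\} + t$, i.e. $\vol{a} \le \sum_{i=1}^q \min\{p, b_i\}$ — but this is already implied by the $\ell = p$ case of~\eqref{eq:totmult-bi-graph} together with $t \geq 0$, or can be absorbed directly. For $\ell \leq p$ we have $a'_i = a_i$ on the initial segment, so the LHS is $\sum_{i=1}^{\ell} a_i$, and the inequality becomes precisely~\eqref{eq:totmult-bi-graph}. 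I expect the main obstacle to be the careful bookkeeping for the range $p < \ell \le p+t$: one must verify that the appended unit entries on the $a$-side never create a binding constraint beyond what $\ell=p$ already enforces, which relies on the RHS being flat (contributing the constant $+t$) precisely because every $b$-entry and every padded $1$ contributes at most its min with $\ell \geq 1$. Once this monotonicity-in-the-tail argument is pinned down, the equivalence of the two inequality systems — and hence the theorem — follows immediately from \Cref{thm:totmult-bipartite} and \Cref{thm:Gale-Ryser}.
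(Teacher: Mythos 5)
Your overall strategy is the same as the paper's: reduce via \Cref{thm:totmult-bipartite} to the Gale--Ryser conditions for the padded partition $(a \circ 1^t, b \circ 1^t)$, observe that the $t$ appended ones on the $b$-side contribute exactly $t$ to every right-hand side, and then argue that the indices $\ell > p$ are redundant. The gap is in that last step. You assert that for $p < \ell \le p+t$ ``the RHS is constant in this range.'' That is false whenever $b_1 > p$: the term $\sum_{i=1}^q \min\{\ell, b_i\}$ keeps growing with $\ell$ as long as $\ell < b_1$, and $b_1 > p$ is precisely the regime this theorem is about (a simple bipartite realization forces $b_1 \le p$, so parallel edges matter exactly when $b_1$ exceeds $p$). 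The conclusion you draw from flatness is also wrong on its own terms: the constraint you extract at $\ell = p+t$, namely $\vol{a} \le \sum_{i=1}^q \min\{p, b_i\}$, is \emph{not} implied by the $\ell = p$ case of \eqref{eq:totmult-bi-graph} --- that case only gives the bound with $+t$ on the right, and invoking $t \ge 0$ weakens rather than strengthens it --- and it is simply false in small examples: for $a = (3)$, $b = (3)$, $t = 2$, the partition is $2$-\totbi (three parallel copies of a single edge; equivalently, $\bigl((3,1,1),(3,1,1)\bigr)$ is bigraphic as a double star), yet $\vol{a} = 3 > \min\{1,3\} = 1$.

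The correct bookkeeping, which is what the paper's proof does, splits the tail indices $\ell = p + h$ with $h \in \{0,\ldots,t\}$ into two regimes. For $p + h < b_1$, the right-hand side grows by at least $1$ per unit increase of $h$ (the $i = 1$ term alone guarantees this), while the left-hand side grows by exactly $1$; hence these constraints follow inductively from the $\ell = p$ instance of \eqref{eq:totmult-bi-graph}. For $p + h \ge b_1$, the right-hand side equals $\vol{b} + t$, and the constraint $\vol{a} + h \le \vol{b} + t$ holds because the partition is balanced ($\vol{a} = \vol{b}$) and $h \le t$ --- note that balancedness, which you never invoke, is essential here, since in this regime the tail constraints can be tight. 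With this two-case argument substituted for your flatness claim, your proof goes through and coincides with the paper's.
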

\begin{proof}
Consider $(a,b)$ and $t$ as in the theorem. One can verify that
the following equations are the Gale-Ryser conditions of
Theorem~\ref{thm:Gale-Ryser} for the partition $(a \circ (1^t), b \circ (1^t))$ of Theorem~\ref{thm:totmult-bipartite}:
For all $\ell = 1,\ldots,p$, 
\begin{equation}
\label{eq:t-totmult-essential}	
\sum_{i=1}^{\ell} a_i \leq \sum_{i= 1}^q \min\{\ell ,b_i\} + t
~,
\end{equation}
and for all $h = 1,\ldots,t$,
\begin{equation}
\label{eq:t-totmult-obsolete}
\sum_{i=1}^p a_i + h \leq \sum_{i= 1}^q \min\{p+h ,b_i\} + t
~.
\end{equation}
	
To finish the proof, we argue that \Cref{eq:t-totmult-obsolete} holds 
for any $h \in \set{0,\ldots,t}$ if \Cref{eq:t-totmult-essential} holds for $\ell = p$.
Recall that $\sum_{i= 1}^q \min\{ p + h ,b_i\} = \sum_{i= 1}^q b_i$ if $p + h \geq b_1$.
It follows that Equation~\eqref{eq:t-totmult-obsolete} holds for indices $h \geq b_1 - p$.
	
Observe that
$\sum_{i= 1}^q \min\{ p + h + 1 ,b_i\} - \sum_{i= 1}^q \min\{ p + h ,b_i\} \geq 1$
for $p + h < b_1$, i.e., the RHS of Equation~\eqref{eq:t-totmult-obsolete}
grows by at least $1$ when moving from index $p + h$ to index $p + h +1$. 
By assumption, Equation~\eqref{eq:t-totmult-essential} holds for $\ell = p$,
implying that Equation~\eqref{eq:t-totmult-obsolete} holds for $h=0$.
Since the LHS of Equation~\eqref{eq:t-totmult-obsolete} grows by $1$ exactly,
Equation~\eqref{eq:t-totmult-obsolete} holds for indices $h < b_1 - p$. 
\end{proof}

Given a degree sequence $d$ with partition $(a,b) \in \bp(d)$,
Theorem~\ref{thm:totmult-bi-graph} implies that 
\[
\totmultbi(a,b) = \max_{1\leq \ell \leq p} \paren{ \sum_{i=1}^{\ell} a_i - \sum_{i= 1}^q \min\{\ell ,b_i\} }
~.
\]
It follows that $\totmultbi(a,b)$ can be computed in time $\mathcal{O}(n)$.



\section{Total Multiplicity vs. Maximum Multiplicity}
\label{sec:gap}

In this section we show that the measures of total multiplicity and
maximum multiplicity sometimes display radically different behavior. 
Specifically, we show that there are sequences such that 
a realization that minimizes the total multiplicity may be far from 
achieving minimum maximum multiplicity, and vice versa.

First, we notice that by definition of $\totmult$ and $\totmultbi$,
in order to minimize the total multiplicity one needs to use a 
maximum number of edges, or to maximize the number of edges 
in the underlying graph.

\begin{observation}
\label{obs:tot}
Let $d$ be a sequence and let $H = (V,E)$ be a multigraph that realizes $d$.
Let $G' = (V,E')$ be the underlying graph of $H$. Then,
\begin{itemize}
\item $\totmult(H) = \totmult(d)$ if and only if $|E'|$ is maximized.
\item $\totmultbi(H) = \totmultbi(d)$ if and only if $|E'|$ is maximized.
\end{itemize}
\end{observation}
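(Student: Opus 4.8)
The plan is to reduce everything to a single invariant: every multigraph realization of $d$ has the same number of edges, counting multiplicities. First I would observe that for any multigraph $H=(V,E)$ realizing $d$, the handshake identity gives $\sum_{v\in V}\deg_H(v)=2|E|$, and since $\deg(H)=d$ this forces $|E|=\vol{d}/2$; set $m=\vol{d}/2$. The point is that $m$ depends only on $d$, not on the chosen realization, so it is a constant across the entire family of realizations over which we minimize.

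With this in hand the first bullet is essentially immediate. By the definition of total multiplicity, $\totmult(H)=|E|-|E'|=m-|E'|$, where $E'$ is the edge set of the underlying graph $G'$. Taking the minimum over all realizations, $\totmult(d)=\min_{H'}\totmult(H')=m-\max_{H'}|E'_{H'}|$, the maximum ranging over the underlying graphs of all realizations of $d$. Hence $\totmult(H)=\totmult(d)$ holds exactly when $m-|E'|=m-\max_{H'}|E'_{H'}|$, i.e. when $|E'|$ attains the largest possible number of edges among underlying graphs of realizations of $d$. This is precisely the assertion that $|E'|$ is maximized.

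The second bullet follows by the identical argument, with the family of realizations restricted to \emph{bipartite} multigraphs. The handshake identity is insensitive to bipartiteness, so $|E|=m$ continues to hold for every bipartite realization, and the same algebra yields $\totmultbi(H)=m-|E'|$ together with $\totmultbi(d)=m-\max|E'|$ over bipartite realizations, giving the equivalence. There is no substantive obstacle here: the only subtlety is bookkeeping, namely that the phrase \enquote{$|E'|$ is maximized} must be read relative to the correct family (all realizations in the first case, bipartite realizations in the second), and that under the standing assumptions these families are nonempty, so the relevant maxima are well defined and the stated equivalences are meaningful.
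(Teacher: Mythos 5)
Your proof is correct and takes essentially the same route as the paper, which treats the observation as immediate from the definitions: every realization of $d$ has exactly $m=\vol{d}/2$ edges (an invariant the paper records in the Preliminaries), so $\totmult(H)=|E|-|E'|=m-|E'|$ and minimizing total multiplicity is the same as maximizing $|E'|$ over the relevant family of realizations. Your explicit handshake-identity justification and your remark that ``$|E'|$ is maximized'' must be interpreted within the bipartite family for the second bullet are exactly the bookkeeping the paper leaves implicit.
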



\subsection{General Graphs}

Let $n \geq 5$, and consider the sequence 
\[
\hd = \left( (2n-2)^2, (n-1)^{n-2} \right)
~.
\]
Observe that $\sum \hd = (n - 1)(n +2)$.
The following two lemmas show that a realization of $\hat{d}$ attaining minimum total
multiplicity is far from obtaining minimum maximum multiplicity, and vise versa.

\begin{lemma}
\label{lemma:totgap}
$\totmult(\hd) = n-1$, and
if $H$ realizes $\hd$ such that $\totmult(H) = \totmult(\hd)$, then $\maxmult(H) = n$.
\end{lemma}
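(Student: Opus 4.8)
The plan is to compute $\totmult(\hd)$ directly using the $\mathcal{O}(n)$ formula derived from \Cref{coro:totmultEG}, namely $\totmult(d) = \max\{\Delta_1, \Delta_{\text{max}}/2\}$, and then separately argue about the structure forced on any realization attaining this optimum. First I would evaluate the Erd\"os-Gallai differences $\Delta_\ell(\hd)$. Since $\hd = ((2n-2)^2, (n-1)^{n-2})$ has $n$ terms with $d_1 = d_2 = 2n-2$ and the remaining $n-2$ entries equal to $n-1$, I expect $\bidx{\hd}$ to sit somewhere near $\ell = n-1$, and the maximizing index to be small (either $\ell = 1$ or $\ell = 2$). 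A quick check: $\Delta_1 = (2n-2) - \sum_{i\geq 2}\min\{1,d_i\} = (2n-2) - (n-1) = n-1$, using that all remaining $n-1$ entries are at least $1$. For $\ell = 2$, the LHS is $2(2n-2) = 4n-4$, and the RHS is $2\cdot 1 + \sum_{i\geq 3}\min\{2, n-1\} = 2 + 2(n-2) = 2n-2$, giving $\Delta_2 = 4n-4-(2n-2) = 2n-2$. So $\Delta_2/2 = n-1$, matching $\Delta_1$. I would verify that no larger index beats this, concluding $\totmult(\hd) = n-1$.

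Next I would establish the structural claim: any $H$ realizing $\hd$ with $\totmult(H) = n-1$ must have $\maxmult(H) = n$. The key idea is a counting argument on the two high-degree vertices. Call them $u, w$ with $\deg(u)=\deg(w)=2n-2$, and let the remaining $n-2$ vertices each have degree $n-1$. By \Cref{obs:tot}, minimizing total multiplicity is equivalent to maximizing the number of edges in the underlying simple graph $G'$. The underlying graph has at most $n$ vertices, so each of $u, w$ can have at most $n-1$ distinct neighbors, forcing the excess on the edges incident to them. I would count the total degree demand $2(2n-2) = 4n-4$ on $\{u,w\}$ against the simple-degree capacity: in $G'$, $u$ and $w$ together can absorb at most some bounded number of simple edges, and the shortfall must be made up by parallel edges totalling exactly $n-1$. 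The extremal configuration that achieves this minimum should be a multigraph where $u$ and $w$ are joined by $n$ parallel edges (or one of them connects to some vertex with multiplicity $n$), pushing $\maxmult$ up to $n$.

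The main obstacle will be the second part: showing the lower bound $\maxmult(H) \geq n$ for \emph{every} optimal-total-multiplicity realization, rather than merely exhibiting one such realization. The inequality $\totmult(H) = n-1$ only pins down the total excess, so in principle the excess could be spread thinly across many edges keeping $\maxmult$ low; I must rule this out. The plan is to show that spreading the excess is incompatible with optimality — that is, any realization with $\maxmult(H) < n$ necessarily wastes edge capacity in a way that forces $\totmult(H) > n-1$. Concretely, I would argue that to keep every multiplicity below $n$ while satisfying the degree demand of $u$ and $w$, the underlying graph would need more distinct neighbors than the $n-2$ available low-degree vertices can supply, contradicting the count from the first paragraph. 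A clean way to formalize this is to fix the underlying graph $G'$ maximizing $|E'|$ and track exactly where the remaining $n-1$ excess edges must go; the degree sequence is tight enough that their placement is essentially forced onto a single pair, yielding multiplicity $n$. I would double-check the small boundary case $n = 5$ explicitly to confirm the formula and the structural claim both hold there.
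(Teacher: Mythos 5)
Your computation of $\totmult(\hd)$ is correct and takes a genuinely different route from the paper: you read the value off the Erd\H{o}s--Gallai differences via \Cref{coro:totmultEG}, and indeed $\Delta_1 = n-1$, while for every $2 \le \ell \le \bidx{\hd} = n-2$ one gets LHS $= 2(2n-2) + (\ell-2)(n-1) = (n-1)(\ell+2)$ and RHS $= \ell(\ell-1) + (n-\ell)\ell = \ell(n-1)$, so $\Delta_\ell = 2n-2$ uniformly (do carry out this general-$\ell$ check rather than just $\ell=2,3$; here it is a one-line computation). Hence $\totmult(\hd) = \max\set{\Delta_1, \Delta_{\max}/2} = n-1$. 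The paper instead exhibits the realization $K_n$ plus $n-1$ extra copies of the edge $(1,2)$ and invokes \Cref{obs:tot} with the trivial bound $\abs{E'} \le \binom{n}{2}$; both routes are sound, and yours has the mild advantage of not needing any construction for the value.

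The second half has a genuine gap: the counting argument you propose for the lower bound on $\maxmult$ does not work. Neighbor supply for the two high-degree vertices is never the obstruction --- vertex $u$ can meet its demand $2n-2$ by taking two copies of each of its $n-1$ incident edges, with maximum multiplicity $2$ --- so ``the underlying graph would need more distinct neighbors than the $n-2$ available low-degree vertices can supply'' cannot yield the contradiction; what such spreading violates is \emph{total} multiplicity, and that must be said precisely. The step that closes the argument (and is the paper's) is a saturation argument at the low-degree vertices: any realization has $\vol{\hd}/2 = (n-1)(n+2)/2$ edge copies, so $\totmult(H) = n-1$ forces $\abs{E'} = \binom{n}{2}$, i.e., the underlying graph is complete; then each vertex $i > 2$ has $n-1$ incident underlying edges, each carrying at least one copy, and since $\deg_H(i) = n-1$ each carries exactly one. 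Thus no excess copy can touch a vertex of degree $n-1$, so all $n-1$ excess copies have both endpoints in $\set{1,2}$, the edge $(1,2)$ carries exactly $n$ copies, and $\maxmult(H) = n$. Your closing plan (``fix $G'$ maximizing $\abs{E'}$ and track where the excess must go'') points in the right direction, but ``essentially forced'' is doing all the work, and the saturation step is the missing idea that makes it a proof. Also note your parenthetical alternative ``or one of them connects to some vertex with multiplicity $n$'' is impossible outright, since a vertex of total degree $n-1$ cannot carry $n$ copies of one edge.
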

\begin{proof}
Consider a mutligraph $H$ which is composed of a full graph and 
$n-1$ copies of the edge $(1,2)$. More formally, let $H = (V,E)$ 
be a multigraph where
\[\textstyle
E = \set{(i,j) \mid 1 \leq i < j \leq n} \uplus \biguplus_{t=1}^{n-1} \set{(1,2)}
~.
\]
Recall that $E$ is a multiset.
See example in \Cref{fig:generaltot}.

We have that $\deg(1) = \deg(2) = (n-1) + (n-1) = 2n - 2$, and 
$\deg(i) = (n-1)$, for $i > 2$, as required. Thus, $H$ realizes $\hd$.
Observe that $\abs{E'} = \binom{n}{2}$, hence by \Cref{obs:tot} we have that
\[\totmult(\hd) = \totmult(H) = (n-1)~.\]

Let $\bar{H}$ be a realization such that $\totmult(\bar{H}) = \totmult(\hd)$.
Hence, $\abs{\bar{E}'} = \binom{n}{2}$.
Consider a vertex $i$, such that $i > 2$. All edges touching $i$ must be used at least once.
Since $\hd_i = n - 1$, all edge adjacent to $i$ must be used exactly once.
It follows that all excess edges are connected to the vertices $\set{1,2}$.
It follows that $\bar{H} = H$. Hence, $H$ minimizes the maximum multiplicity, 
and $\maxmult(H) = n$.
\end{proof}

\begin{figure}[t]
\centering
\subfloat[Optimal \totmult\ realization $H_1$.]{
\label{fig:generaltot}
\centering
~~~~~
\begin{footnotesize}
\begin{tikzpicture}[scale=1.2,auto,fill=lightgray,
    fdot/.style={circle,draw=black,fill=black,inner sep=0pt,minimum size=5pt},
    gdot/.style={circle,draw=black,fill=lightgray,inner sep=0pt,minimum size=5pt},
    edot/.style={circle,draw=black,fill=white,inner sep=0pt,minimum size=5pt}]
  \node[fdot] (v1) at (0.8,3) {};
  \node[fdot] (v2) at (3.2,3) {};
  \node[edot] (v3) at (4,1.2) {};
  \node[edot] (v4) at (2,0) {};
  \node[edot] (v5) at (0,1.2) {};
%
  \path (v1) edge (v2);
  \path (v1) edge [bend left = 10] (v2);
  \path (v1) edge [bend left = 20] (v2);
  \path (v1) edge [bend left = 30] (v2);
  \path (v1) edge [bend left = 40] (v2);
  \draw (v1) -- (v3);
  \draw (v1) -- (v4);
  \draw (v1) -- (v5);
  \draw (v2) -- (v3);
  \draw (v2) -- (v4);
  \draw (v2) -- (v5);
  \draw (v3) -- (v4);
  \draw (v3) -- (v5);
  \draw (v4) -- (v5);
\end{tikzpicture}
\end{footnotesize}
~~~~~
}
\hspace{30pt}
\subfloat[Optimal \maxmult\ realization $H_2$.]{
\label{fig:generalmax}
\centering
~~~~~
\begin{footnotesize}
\begin{tikzpicture}[scale=1.2,auto,fill=lightgray,
    fdot/.style={circle,draw=black,fill=black,inner sep=0pt,minimum size=5pt},
    gdot/.style={circle,draw=black,fill=lightgray,inner sep=0pt,minimum size=5pt},
    edot/.style={circle,draw=black,fill=white,inner sep=0pt,minimum size=5pt}]
  \node[fdot] (v1) at (0.8,3) {};
  \node[fdot] (v2) at (3.2,3) {};
  \node[edot] (v3) at (4,1.2) {};
  \node[edot] (v4) at (2,0) {};
  \node[edot] (v5) at (0,1.2) {};  
  \path (v1) edge [bend left = 10] (v2);
  \path (v1) edge [bend right = 10] (v2);
  \path (v1) edge [bend left = 10] (v3);
  \path (v1) edge [bend right = 10] (v3);
  \path (v1) edge [bend left = 10] (v4);
  \path (v1) edge [bend right = 10] (v4);
  \path (v1) edge [bend left = 10] (v5);
  \path (v1) edge [bend right = 10] (v5);
  \path (v2) edge [bend left = 10] (v3);
  \path (v2) edge [bend right = 10] (v3);
  \path (v2) edge [bend left = 10] (v4);
  \path (v2) edge [bend right = 10] (v4);
  \path (v2) edge [bend left = 10] (v5);
  \path (v2) edge [bend right = 10] (v5);
\end{tikzpicture}
\end{footnotesize}
~~~~~
}
\caption{Optimal multigraph realizations for the sequence $\hat{d} = (8^2,4^3)$ ($n=5$).
On the left we have $\totmult(H_1) = 4$ and $\maxmult(H_1) = 5$, while
on the right we have $\totmult(H_2) = 7$ and $\maxmult(H_2) = 2$.}
\label{fig:general}
\end{figure}
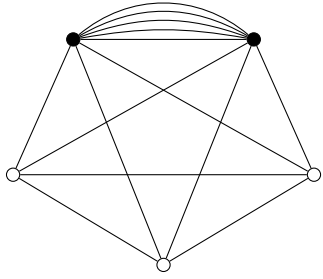
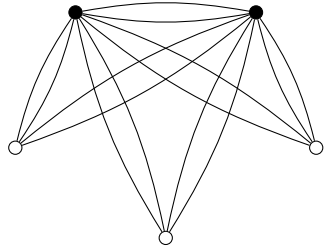

\begin{lemma}
\label{lemma:maxgap}
$\maxmult(\hd) = 2$, and if $H$ realizes $\hd$ such that $\maxmult(H) = \maxmult(\hd)$,
then $\totmult(H) \geq 2n - 3$.
\end{lemma}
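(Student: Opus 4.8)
The statement has two parts: the value $\maxmult(\hd)=2$, and the lower bound $\totmult(H)\ge 2n-3$ for every realization $H$ attaining this minimum maximum multiplicity. For the first part, I would first argue $\maxmult(\hd)\ge 2$ by noting that $\hd$ is not graphic: a loopless simple graph on $n$ vertices has maximum degree at most $n-1$, whereas $\hd_1=2n-2>n-1$ for $n\ge 2$. For the matching upper bound I would exhibit an explicit $2$-\maxbi\ realization, so that $\maxmult(\hd)\le 2$. Label the two degree-$(2n-2)$ vertices $1,2$ and the $n-2$ degree-$(n-1)$ vertices $3,\dots,n$. Place a double edge on $(1,2)$ and on every pair $(1,i)$, $(2,i)$ with $i\ge 3$; this gives vertices $1,2$ degree $2+2(n-2)=2n-2$ and gives each small vertex degree $4$ so far. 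The residual degree $n-5$ of each small vertex is then supplied by an $(n-5)$-regular \emph{simple} graph on the $n-2$ small vertices, which exists since $0\le n-5\le (n-2)-1$ and $(n-5)(n-2)$ is even (the last step is where $n\ge 5$ is used). The maximum multiplicity of this realization is $2$, completing $\maxmult(\hd)=2$.

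For the lower bound, I would first record the elementary identity that, when $\maxmult(H)=2$, every pair of vertices carries $0$, $1$, or $2$ edges, so the number of \emph{excess} edges equals the number of doubled pairs; thus $\totmult(H)$ equals the number of pairs realized by a double edge. The crux is then a \emph{saturation} observation: in a loopless multigraph on $n$ vertices with maximum multiplicity $2$, the largest possible degree of a vertex is $2(n-1)=2n-2$, achieved only by placing a double edge to each of the other $n-1$ vertices. Since vertices $1$ and $2$ have degree exactly $2n-2$, each of them must be incident, in any such $H$, to a double edge with every other vertex.

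Finally I would count. The pairs incident to vertex $1$ or vertex $2$, namely $\{1,j\}$ for $j\in\{2,\dots,n\}$ and $\{2,j\}$ for $j\in\{3,\dots,n\}$, number $(n-1)+(n-2)=2n-3$, and by the saturation observation all of them are doubled in $H$. Hence $\totmult(H)$, being the number of doubled pairs, is at least $2n-3$. I expect the \textbf{main obstacle} to be resisting the tempting but too-weak counting argument: bounding only the ``missing degree'' at the two big vertices (each loses at least $(2n-2)-(n-1)=n-1$ relative to its simple-graph degree cap) yields merely $\totmult(H)\ge n-1$, off by roughly a factor of two. The real leverage is that saturation forces every incident edge of a big vertex to be \emph{doubled} rather than merely forcing some aggregate excess, so all $2n-3$ of these pairs are counted as distinct doubled pairs. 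I would also remark that the explicit realization above attains $\totmult=2n-3$, so the bound is tight.
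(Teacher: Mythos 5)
Your proof is correct and follows essentially the same route as the paper: an explicit realization doubling every edge at the two degree-$(2n-2)$ vertices (the paper fills in the small vertices with a complete graph minus a Hamiltonian cycle, which is exactly an $(n-5)$-regular graph as in your construction, also requiring $n\ge 5$), combined with the saturation argument that a degree of $2n-2$ under multiplicity at most $2$ forces a double edge to all $n-1$ other vertices, yielding $2n-3$ doubled pairs. If anything, your write-up is more explicit than the paper's terse justification of the lower bound, and your separate verification that $\maxmult(\hd)\ge 2$ via non-graphicity ($\hd_1=2n-2>n-1$) makes a step the paper leaves implicit.
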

\begin{proof}
Consider a vertex $1$ (or $2$). To minimize its load, its degree requirement 
should be distributed equally among the rest of the vertices. 
This leads to a realization $H$ in which each edge of $1$ and $2$ has two copies.
The degree requirements of the rest of the vertices are obtained by removing
a cycle from a complete graphs (this is the reason for requiring $n \geq 5$). 
Formally, $H = (V,E)$ is define as follows:
\[
E = \set{(1,i), (1,i) \mid i \geq 2} \uplus
      \set{(2,i), (2,i) \mid i \geq 3} \uplus
      \set{(i,j) \mid 2 < i, j \neq i+1 \text{ and } (i,j) \neq (3,n)} 
~.
\]
See example in \Cref{fig:generalmax}.

We have that $\deg(1)  = \deg(2) = 2(n-1)$, and $\deg(i)= 2+(n-1-2) = n-1$, 
for $i > 2$, as required. Thus, $H$ realizes $\hd$. 
Moreover, each edges has at most two copies, which means that $\maxmult(\hd)=2$.

Observe that an edge $(i,j)$, where $i, j  > 2$ has at most a single copy. 
Hence, $H$ minimizes the total multiplicity. 
In addition, $\totmult(H) = (n - 1) + (n - 2) = 2n - 3$. 
\end{proof}

\begin{corollary}
Let $n \geq 5$.
There exists a sequence $\hd$ of length $n$ such that $\totmult(H_2) - \totmult(H_1) = n-2$ 
and $\maxmult(H_1) - \maxmult(H_2) = n-2$, for any 
with two realizations $H_1$ and $H_2$ of $\hd$
such that $\totmult(H_1) = \totmult(\hd)$ and $\maxmult(H_2) = \maxmult(\hd)$.
\end{corollary}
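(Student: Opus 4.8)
The plan is to read this corollary off \Cref{lemma:totgap} and \Cref{lemma:maxgap} directly. Together these two lemmas already determine $\totmult(\hd)$, $\maxmult(\hd)$, and --- crucially --- the value taken by each of the two measures at an optimizer for the \emph{other} measure. Consequently the corollary is not a genuinely new result but a bookkeeping statement: all that remains is to name two concrete realizations and subtract, with no fresh construction or counting required.

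Concretely, I would keep the witness sequence $\hd = \paren{(2n-2)^2, (n-1)^{n-2}}$ of this subsection and instantiate the two realizations as follows. Let $H_1$ be a realization minimizing total multiplicity. \Cref{lemma:totgap} yields $\totmult(H_1) = \totmult(\hd) = n-1$, and its uniqueness argument --- each degree-$(n-1)$ vertex forces all of its incident edges to be used exactly once, so every excess copy sits on the pair $\set{1,2}$ --- pins $H_1$ down as the complete graph together with $n-1$ copies of $(1,2)$, whence $\maxmult(H_1) = n$. Let $H_2$ be the explicit maxmult-optimal realization built in \Cref{lemma:maxgap}, in which every edge incident to $\set{1,2}$ is doubled and the degree-$(n-1)$ requirements of the remaining vertices are met by a complete graph minus a cycle; that lemma gives $\maxmult(H_2) = \maxmult(\hd) = 2$ and $\totmult(H_2) = (n-1) + (n-2) = 2n-3$. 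The two gaps are now one line each: $\maxmult(H_1) - \maxmult(H_2) = n-2$ and $\totmult(H_2) - \totmult(H_1) = (2n-3) - (n-1) = n-2$. The hypothesis $n \geq 5$ is simply inherited from \Cref{lemma:maxgap}, where it guarantees that the cycle removal realizing the degree-$(n-1)$ vertices is feasible.

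There is essentially no obstacle here; the only point deserving a remark is how to interpret the quantifier so that the first difference is an exact equality rather than merely $\geq n-2$. \Cref{lemma:maxgap} only guarantees $\totmult(H) \geq 2n-3$ for an arbitrary maxmult-optimal $H$, since such a realization is free to (wastefully) double some edge among the vertices $\set{3,\ldots,n}$. However, once $\maxmult = 2$ the two degree-$(2n-2)$ vertices force every edge incident to $\set{1,2}$ to be doubled, contributing exactly $2n-3$ excess copies, and this bound is attained precisely when the edges among the remaining vertices are kept simple. Reading $H_2$ as a maxmult-optimal realization that additionally minimizes total multiplicity --- equivalently, the construction of \Cref{lemma:maxgap} --- therefore makes both displayed differences hold with equality, which is the cleanest way to state and prove the corollary.
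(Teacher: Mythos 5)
Your proposal is correct and takes essentially the same route as the paper, which states the corollary with no separate proof precisely because it follows by instantiating $H_1$ and $H_2$ with the realizations analyzed in \Cref{lemma:totgap} and \Cref{lemma:maxgap} and subtracting, exactly as you do. Your closing remark is also well taken: since \Cref{lemma:maxgap} only guarantees $\totmult(H_2) \geq 2n-3$ for an arbitrary realization with $\maxmult(H_2)=\maxmult(\hd)$, the displayed equality $\totmult(H_2)-\totmult(H_1)=n-2$ requires reading $H_2$ as the explicit construction (or as a maxmult-optimal realization that additionally minimizes total multiplicity), a looseness in the corollary's (evidently garbled) statement that you correctly identify and repair.
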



\subsection{Bipartite Graphs}

Let $n$ be an even integer such that $n\geq 4$, and consider the sequence
\[
\td = \left( n^2, (n/2)^{n-2} \right)
~.
\]

\begin{lemma}
\label{lemma:bitotgap}
$\totmultbi(\td) = n/2$, and if $H$ realizes $\td$ such that $\totmultbi(H) = \totmultbi(\td)$,
then $\maxmultbi(H) \geq n/2+1$.
\end{lemma}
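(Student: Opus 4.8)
The plan is to derive both claims from a single edge count, using the fact (\Cref{obs:tot}) that minimizing the total multiplicity is the same as maximizing the number of edges $\abs{E'}$ of the underlying graph. The sequence $\td$ has exactly $n$ vertices and volume $\vol{\td} = 2n + (n-2)(n/2) = n(n+2)/2$, so every realization $H=(V,E)$ uses exactly $m = \vol{\td}/2 = n^2/4 + n/2$ edges counted with multiplicity. If $H$ is bipartite with sides $L,R$, then $\abs{L}+\abs{R}=n$, so its underlying graph $G'=(V,E')$ satisfies $\abs{E'}\le \abs{L}\cdot\abs{R}\le (n/2)^2 = n^2/4$, with equality only when $\abs{L}=\abs{R}=n/2$ and $G'$ is the complete bipartite graph $K_{n/2,n/2}$. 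Since $\totmult(H)=\abs{E}-\abs{E'}=m-\abs{E'}$, this already yields $\totmult(H)\ge m-n^2/4 = n/2$ for every bipartite realization, i.e. $\totmultbi(\td)\ge n/2$.

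For the matching upper bound I would exhibit one realization: take $K_{n/2,n/2}$ on sides $L=\set{u,\ell_1,\dots,\ell_{n/2-1}}$ and $R=\set{w,r_1,\dots,r_{n/2-1}}$ and add $n/2$ parallel copies of the single edge $(u,w)$. Then $\deg(u)=\deg(w)=n/2+n/2=n$, every other vertex has degree $n/2$, and the total multiplicity is exactly $n/2$; hence $\totmultbi(\td)\le n/2$, proving $\totmultbi(\td)=n/2$. (Alternatively one reaches the same value by enumerating the at most two balanced partitions of $\td$ and evaluating the closed form $\totmultbi(a,b)=\max_{\ell}\paren{\sum_{i=1}^{\ell}a_i-\sum_{i=1}^{q}\min\set{\ell,b_i}}$ of \Cref{thm:totmult-bi-graph}: the symmetric partition $a=b=(n,(n/2)^{n/2-1})$ gives $n/2$, while placing both degree-$n$ vertices on one side gives at least $n-2\ge n/2$.)

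For the second claim I would show that the equality $\totmult(H)=n/2$ rigidly fixes $H$ near the two large vertices. Equality in the chain above forces $\abs{L}=\abs{R}=n/2$ and $G'=K_{n/2,n/2}$. A degree count then pins down the partition: a side holding $k$ vertices of degree $n$ and $n/2-k$ of degree $n/2$ has volume $kn/2 + n^2/4$, which equals $m$ exactly when $k=1$; so each side carries exactly one degree-$n$ vertex, say $u\in L$ and $w\in R$. Now decompose the excess by its $L$-endpoint: since every vertex of $L$ has underlying degree $n/2$ in $K_{n/2,n/2}$, the excess incident to $v\in L$ is $\deg(v)-n/2$, which equals $n/2$ for $v=u$ and $0$ otherwise. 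Thus all $n/2$ units of excess are incident to $u$, and the symmetric count on $R$ shows all are incident to $w$. An edge incident to both $u\in L$ and $w\in R$ can only be $(u,w)$, so all $n/2$ excess copies lie on $(u,w)$, giving $\abs{E_H(u,w)}=n/2+1$ and hence $\maxmult(H)\ge n/2+1$.

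The main obstacle — the only non-mechanical step — is this rigidity argument: one must check that attaining the minimum total multiplicity simultaneously forces the balanced cardinality split $\abs{L}=\abs{R}=n/2$, completeness of $G'$, and the placement of the two degree-$n$ vertices on \emph{opposite} sides. These three facts together are exactly what make $u$ and $w$ share a unique common edge, so the entire excess budget of $n/2$ is forced onto $(u,w)$. I would also verify the small case $n=4$ separately, where the ``both large vertices on one side'' partition is infeasible and only the symmetric partition survives, to confirm the argument degrades gracefully.
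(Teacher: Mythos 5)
Your proof is correct and follows essentially the same route as the paper: the same construction ($K_{n/2,n/2}$ plus $n/2$ parallel copies of one edge) for the upper bound, the maximize-$\abs{E'}$ edge count (the paper's \Cref{obs:tot}) for the lower bound $\totmult(H) = \vol{\td}/2 - \abs{E'} \geq n/2$, and a rigidity argument forcing all excess onto the unique edge joining the two degree-$n$ vertices. If anything, your volume count pinning exactly one degree-$n$ vertex on each side ($k=1$) makes explicit a step the paper's proof leaves implicit, namely ruling out that an optimal realization places both large vertices on the same side.
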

\begin{proof}
Let $a = b = (n, (\frac{n}{2})^{(n-2)/2})$. We construct a multigraph $H$ 
that realizes $(a,b)$, which consist of a complete bipartite graph and 
$n/2$ copies of the edge $(1,1)$.
Formally, $H = (A,B,E)$, where 
\[\textstyle
E = \set{(i,j) \mid 1\leq i,j \leq n/2}\uplus\biguplus_{t=1}^{n/2} \set{(1,1)}
~.
\]
See example in \Cref{fig:bitot}.

On both sides we have that $\deg(1) = \frac{n}{2} + \frac{n}{2} = n$, 
and $\deg(i) = \frac{n}{2}$, for $i > 1$, as required. 
Thus, $H$ realizes $(a,b)$.
Observe that $\abs{E'} = \frac{n}{2} \cdot \frac{n}{2}$, hence by
\Cref{obs:tot} we have that 
\[
\totmultbi(\td) = \totmultbi(a,b) = \totmultbi(H) = n/2
~.
\]

Let $\bar{H}$ be a realization such that $\totmultbi(\bar{H})=\totmultbi(a,b)$.
It follows that $\abs{\bar{E}'} = \frac{n}{2} \cdot \frac{n}{2}$.
Consider a vertex $i$, such that $i > 2$. All edges touching $i$ must be used at least once.
Since $\td_i = \frac{n}{2}$, all edges touching $i$ must be used exactly once.
It follows that all excess edges are connected to the vertices $\set{1,2}$.
Hence, $\bar{H} = H$. Also, $\maxmultbi(H) = n/2 + 1$.
\end{proof}

\begin{figure}[t]
\centering
\subfloat[Optimal $\totmultbi$ realization $H_1$.]{
\label{fig:bitot}
\centering
~ \hspace{5pt}
\begin{footnotesize}
\begin{tikzpicture}[scale=1.5,auto,fill=lightgray,
    fdot/.style={circle,draw=black,fill=black,inner sep=0pt,minimum size=5pt},
    gdot/.style={circle,draw=black,fill=lightgray,inner sep=0pt,minimum size=5pt},
    edot/.style={circle,draw=black,fill=white,inner sep=0pt,minimum size=5pt}]
  \node[fdot] (v1) at (0,2) {};
  \node[edot] (v3) at (0,1) {};
  \node[edot] (v5) at (0,0) {};
  \node[fdot] (v2) at (2,2) {};
  \node[edot] (v4) at (2,1) {};
  \node[edot] (v6) at (2,0) {};
%
  \path (v1) edge [bend left = 5] (v2);
  \path (v1) edge [bend left = 15] (v2);
  \path (v1) edge [bend right = 5] (v2);
  \path (v1) edge [bend right = 15] (v2);
  \draw (v1) -- (v4);
  \draw (v1) -- (v6);
  \draw (v3) -- (v2);
  \draw (v3) -- (v4);
  \draw (v3) -- (v6);
  \draw (v5) -- (v2);
  \draw (v5) -- (v4);
  \draw (v5) -- (v6);
\end{tikzpicture}
\end{footnotesize}
\hspace{5pt} ~
}
\hspace{5pt}
\subfloat[Optimal $\maxmultbi$ realization $H_2$.]{
\label{fig:biab}
\centering
~ \hspace{5pt} 
\begin{footnotesize}
\begin{tikzpicture}[scale=1.5,auto,fill=lightgray,
    fdot/.style={circle,draw=black,fill=black,inner sep=0pt,minimum size=5pt},
    gdot/.style={circle,draw=black,fill=lightgray,inner sep=0pt,minimum size=5pt},
    edot/.style={circle,draw=black,fill=white,inner sep=0pt,minimum size=5pt}]
  \node[fdot] (v1) at (0,2) {};
  \node[edot] (v3) at (0,1) {};
  \node[edot] (v5) at (0,0) {};
  \node[fdot] (v2) at (2,2) {};
  \node[edot] (v4) at (2,1) {};
  \node[edot] (v6) at (2,0) {};
  \path (v1) edge [bend left = 5] (v2);
  \path (v1) edge [bend right = 5] (v2);
  \path (v1) edge [bend left = 5] (v4);
  \path (v1) edge [bend right = 5] (v4);
  \path (v1) edge [bend left = 5] (v6);
  \path (v1) edge [bend right = 5] (v6);
  \path (v2) edge [bend left = 5] (v3);
  \path (v2) edge [bend right = 5] (v3);
  \path (v2) edge [bend left = 5] (v5);
  \path (v2) edge [bend right = 5] (v5);
  \path (v3) edge (v6);
  \path (v5) edge (v4);
\end{tikzpicture}
\end{footnotesize}
\hspace{5pt} ~
}
\hspace{5pt}
\subfloat[Optimal $\maxmultbi$ realization $H_3$.]{
\label{fig:bimax}
\centering
~ \hspace{5pt} 
\begin{footnotesize}
\begin{tikzpicture}[scale=1.5,auto,fill=lightgray,
    fdot/.style={circle,draw=black,fill=black,inner sep=0pt,minimum size=5pt},
    gdot/.style={circle,draw=black,fill=lightgray,inner sep=0pt,minimum size=5pt},
    edot/.style={circle,draw=black,fill=white,inner sep=0pt,minimum size=5pt}]
  \node[fdot] (v1) at (0,1.5) {};
  \node[fdot] (v2) at (0,0.5) {};
  \node[edot] (v3) at (2,2) {};
  \node[edot] (v4) at (2,1.33) {};
  \node[edot] (v5) at (2,0.66) {};
  \node[edot] (v6) at (2,0) {};
  \path (v1) edge [bend left = 5] (v3);
  \path (v1) edge [bend right = 5] (v3);
  \path (v1) edge [bend left = 5] (v4);
  \path (v1) edge [bend right = 5] (v4);
  \path (v1) edge (v5);
  \path (v1) edge (v6);
  \path (v2) edge (v3);
  \path (v2) edge (v4);
  \path (v2) edge [bend left = 5] (v5);
  \path (v2) edge [bend right = 5] (v5);
  \path (v2) edge [bend left = 5] (v6);
  \path (v2) edge [bend right = 5] (v6);
\end{tikzpicture}
\end{footnotesize}
\hspace{5pt} ~
}
\caption{Multigraph bipartite realizations for the sequence $\tilde{d} = (6^2,3^4)$.
On the left we have $\totmultbi(H_1) = 3$ and $\maxmultbi(H_1) = 4$;
In the center we have $\totmultbi(H_2) = 5$ and $\maxmultbi(H_2) = 2$;
On the right we have $\totmultbi(H_3) = 4$ and $\maxmultbi(H_3) = 2$.}
\label{fig:bipartite}
\end{figure}
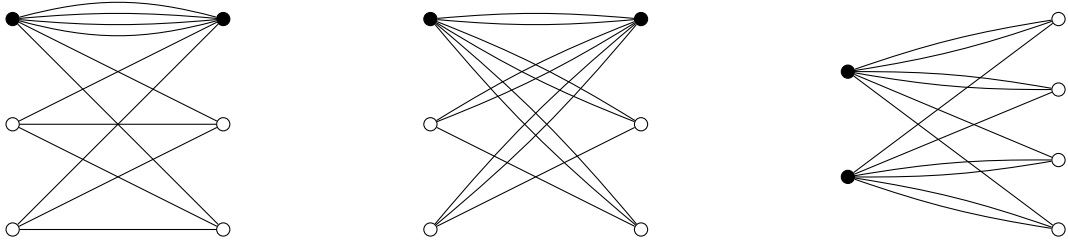

\begin{lemma}
\label{lemma:bimaxgap}
$\maxmultbi(\td) = 2$, and if $H$ realizes $\td$ such that $\maxmult(H) = \maxmult(\td)$,
then $\totmult(H) \geq n-2$.
\end{lemma}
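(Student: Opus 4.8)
The plan is to prove the two assertions separately: first that $\maxmultbi(\td)=2$, and then the total-multiplicity lower bound for any realization attaining this value. For $\maxmultbi(\td)\ge 2$ I would use a pigeonhole argument on either degree-$n$ vertex: in any bipartite realization a vertex $v_1$ of degree $n$ lies on one side, so the opposite side has at most $n-1$ vertices (there are only $n$ vertices in total and $v_1$ is not among them), and realizing degree $n$ with at most $n-1$ distinct neighbors forces some edge to have multiplicity at least $2$. For the matching upper bound I would exhibit one concrete realization of maximum multiplicity $2$: take the balanced partition $a=b=\paren{n,(n/2)^{(n-2)/2}}$ with $|A|=|B|=n/2$, join each degree-$n$ vertex to all $n/2$ vertices of the opposite side by a double edge (contributing exactly $n$), and join the remaining degree-$(n/2)$ vertices by single edges; alternatively one can verify Berge's condition of \Cref{thm:r-multi-bi-graph} with $r=2$.

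For the total-multiplicity bound the key reduction is that when $\maxmult(H)=2$ every edge has multiplicity $1$ or $2$, so $\totmult(H)$ equals the number $D$ of double edges, and each double edge has exactly one endpoint on each side. I would then case-split on the placement of the two degree-$n$ vertices $v_1,v_2$. If they lie on opposite sides with $|A|=p$ and $|B|=q=n-p$, then $v_1$ has at most $q$ distinct neighbors, hence at least $n-q$ incident double edges, while $v_2$ has at least $n-p$ incident double edges; these two sets of double edges can share only the edge $(v_1,v_2)$, so $D\ge(n-q)+(n-p)-1=n-1\ge n-2$. If $v_1,v_2$ lie on the same side $A$ with $|A|=p$, $|B|=q=n-p$, each of $v_1,v_2$ has at most $q=n-p$ distinct neighbors and hence at least $p$ incident double edges; since these are incident to distinct $A$-vertices, $D\ge 2p$. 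Complementarily, each of the $q=n-p$ degree-$(n/2)$ vertices of $B$ has at most $p$ distinct neighbors and hence at least $n/2-p$ double edges, so $D\ge(n-p)(n/2-p)$.

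The main obstacle is this same-side case, where neither elementary bound alone yields $n-2$ for all $p$, and where the naive counting inequality $\totmult(H)\ge \vol{\td}/2-|A||B|$ only gives $n/2$. The fix is to combine the two bounds across the range of $p$: for $p\le n/2-2$ the factors satisfy $n-p\ge n/2+2$ and $n/2-p\ge 2$, so $D\ge(n-p)(n/2-p)\ge 2(n/2+2)=n+4$; for $p=n/2-1$ we invoke $D\ge 2p=n-2$; and for $p\ge n/2$ we invoke $D\ge 2p\ge n$. Hence $D\ge n-2$ in every subcase, which together with the opposite-side case proves $\totmult(H)\ge n-2$. I would close by noting that $p=n/2-1$ (both degree-$n$ vertices on the smaller side, as in realization $H_3$ of \Cref{fig:bimax}) attains equality, so the bound is tight and consistent with the earlier claim that $\maxmultbi(\td)=2$ is attainable while $\totmultbi(\td)=n/2$ is not.
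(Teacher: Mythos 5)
Your proof is correct, but it reaches the lower bound by a genuinely different route than the paper. The paper's proof observes that $\td$ has exactly two balanced partitions, (P1) $a=b=\paren{n,(n/2)^{(n-2)/2}}$ and (P2) $a'=\paren{n^2,(n/2)^{n/2-3}}$, $b'=\paren{(n/2)^{n/2+1}}$, and for each exhibits an explicit maximum-multiplicity-$2$ realization --- your upper-bound construction is precisely the paper's (P1) graph --- computing $\totmult(H)=n-1$ and $\totmult(H'')=n-2$ respectively; minimality within each partition is argued from the structure of these constructions, and rather informally (``to minimize its load, its degree requirement should be distributed equally''). You instead prove the lower bound uniformly: since $\maxmult(H)=2$ makes $\totmult(H)$ equal the number $D$ of double edges, you double count the double edges forced at the two degree-$n$ vertices and at the degree-$(n/2)$ side, splitting on whether the hubs share a side and on the side size $p$. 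This yields $D\ge n-1$ when the hubs are on opposite sides and $D\ge\max\set{2p,\,(n-p)(n/2-p)}\ge n-2$ when they share a side, so your conclusion holds for \emph{every} vertex split, not just balanced ones (balancedness in fact forces $p=n/2$ or $p=n/2-1$, so your sweep over all $p$ is more than needed, but harmless); your counting is consistent with the paper's values, with $n-1$ matching (P1) and the minimum $n-2$ arising exactly at $p=n/2-1$, i.e.\ (P2). What each approach buys: yours is a complete, self-contained lower-bound argument where the paper's minimality claims are only sketched, and it explains structurally why equality requires both degree-$n$ vertices on the smaller side; the paper's enumeration-plus-construction proof, in return, supplies the explicit extremal realizations ($H_2$ and $H_3$ of \Cref{fig:bipartite}) that the subsequent corollary needs in order to show the value $n-2$ is actually attained --- a point you cover only by citing $H_3$, which suffices for the lemma as stated.
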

\begin{proof}
There are two possible partitions for $\td$:
\begin{description}
\item{(P1)} $a = b = \left(n, (\frac{n}{2})^{(n-2)/2}\right)$, and
\item{(P2)} $a'=\left(n^2,(\frac{n}{2})^{n/2-3}\right)$ and
    $b'=\left((\frac{n}{2})^{n/2+1}\right)$.
\end{description}
First examine partition~(P1).
Consider a vertex $1 \in A$ (or $1 \in B$). To minimize its load, its degree 
requirement should be distributed equally among the rest of the vertices on 
the other side of the partition. This leads to a realization $H$, where vertex 
$1 \in A$ is connected to all the vertices in $B$ by two copies, while vertex 
$1 \in B$ is connected to all the vertices in $A$ by two copies. The 
requirement of the other vertices is obtained using a complete bipartite 
graph minus a perfect matching.
Hence, $H = (A,B,E)$, where 
\[
E = \set{(1,1), (1,1)} \uplus \set{(1,i), (1,i), (i,1), (i,1)\mid i \geq 2} \uplus
      \set{(i,j) \mid i,j \geq 2, j \neq i }
~.
\]
See example in \Cref{fig:biab}.

We have that $\deg(1) = 2(n/2) = n$, and $\deg(i) = 2 + (n/2 - 2) = n/2$, 
for $i > 2$, on both sides, as required. Thus, $H$ realizes $(a,b)$. Moreover, 
$\maxmultbi(a,b) = \maxmultbi(H) = 2$, since each edge has at most two copies.
Observe that an edge $(i,j)$, where $i, j  > 1$ has at most a single copy. 
Hence, $H$ minimizes the maximum multiplicity. 
In addition, $\totmult(H) = n/2 + n/2 - 1 = n - 1$. 

Next, consider partition (P2).
We construct a realization $H'' = (V'',E'')$ as follows:
\[ 
E'' = \set{(i,j) \mid i \leq 2 \text{ or } i > 2 \text{ and } j \neq i} 
      \uplus \set{(1,j) \mid j \leq n/2 - 1}
      \uplus \set{(2,j) \mid j \geq 3}~. 
\]
See example in \Cref{fig:bimax}.

On the left side, we have that $\deg(1) =\deg(2) = n/2+1 + n/2-1 = n$, 
and $\deg(i) = n/2 + 1 - 1 = n/2$, for $i > 3$.
On the right side, $\deg(j) = n/2-1 + 2$, for $i \in \set{3,\ldots,n/2-1}$, and
$\deg(j) = n/2 + 1$, for $i \in \set{1,2,n/2,n/2+1}$.
Thus, $H''$ realizes $(a',b')$. Moreover, 
$\maxmultbi(a',b') = \maxmultbi(H'') = 2$, since each edge has at most two copies.
Furthermore, $\totmult(H'') = 2(n/2 - 1) = n - 2$.
\end{proof}

\begin{corollary}
Let $n$ be an even integer such that $n \geq 4$.
There exists a sequence $\td$ of length $n$ such that 
$\totmultbi(H_2) - \totmultbi(H_1) = n/2 - 2$ and
$\maxmultbi(H_1) - \maxmultbi(H_2) = n/2 - 1$, 
for any 
with two realizations $H_1$ and $H_2$ of $\td$
such that $\totmultbi(H_1)=\totmultbi(\td)$ and $\maxmultbi(H_2)=\maxmultbi(\td)$.
\end{corollary}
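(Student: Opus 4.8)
The plan is to read off from Lemmas~\ref{lemma:bitotgap} and~\ref{lemma:bimaxgap} enough rigidity to pin three of the four quantities occurring in the two differences, and to bracket the fourth between a universal lower bound and a matching construction. First I would invoke Lemma~\ref{lemma:bitotgap}, noting that its proof establishes more than the value $\totmultbi(\td)=n/2$: it shows the total-optimal realization is \emph{unique}. Indeed, any $\bar H$ with $\totmultbi(\bar H)=\totmultbi(\td)$ satisfies $|\bar E'|=(n/2)^2$, which forces every edge incident to a degree-$(n/2)$ vertex to be simple, and hence $\bar H=H$. Therefore \emph{every} realization $H_1$ with $\totmultbi(H_1)=\totmultbi(\td)$ has both $\totmultbi(H_1)=n/2$ and $\maxmultbi(H_1)=n/2+1$; these values are not merely attainable but forced.

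Next I would apply Lemma~\ref{lemma:bimaxgap}, which gives $\maxmultbi(\td)=2$, so \emph{every} realization $H_2$ with $\maxmultbi(H_2)=\maxmultbi(\td)$ has $\maxmultbi(H_2)=2$. Combining with the first paragraph, the second equation holds for every admissible pair with no freedom left on either side:
\[
\maxmultbi(H_1)-\maxmultbi(H_2)=(n/2+1)-2=n/2-1 .
\]
This is the genuinely universal half of the statement, and it rests entirely on the uniqueness of the total-optimal realization together with the optimality value $\maxmultbi(\td)=2$.

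For the first equation I would control $\totmultbi(H_2)$ through the two balanced partitions (P1) and (P2) enumerated in the proof of Lemma~\ref{lemma:bimaxgap}, which are the only ones $\td$ admits. Every max-optimal $H_2$ realizes one of them, and in each a degree-$n$ vertex has too few neighbors on the opposite side to reach degree $n$ using simple edges under the cap $\maxmultbi\le 2$, so it is forced to double some edges: in (P1) a big vertex has exactly $n/2$ neighbors and must double all of them, forcing $n-1$ excess edges after accounting for the edge shared by the two big vertices; in (P2) each big vertex has $n/2+1$ neighbors and must double at least $n/2-1$ of them, forcing at least $n-2$ excess edges. Hence $\totmultbi(H_2)\ge n-2$ for \emph{every} admissible $H_2$, and the realization $H''$ of (P2) attains this bound. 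Together with $\totmultbi(H_1)=n/2$ this yields
\[
\totmultbi(H_2)-\totmultbi(H_1)\ge n/2-2 ,
\]
with equality witnessed by the minimum-total max-optimal realization.

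The main obstacle is the asymmetry between the two differences. Whereas the uniqueness of $H_1$ makes the max-multiplicity difference a rigid equality for \emph{all} admissible pairs, the total multiplicity of a max-optimal realization is \emph{not} uniquely determined: partitions (P1) and (P2) produce the distinct values $n-1$ and $n-2$. Consequently the first equality must be understood as a tight bound, attained by the (P2) realization $H''$ (available once $n\ge 6$, so that (P2) is nonempty), while the universally valid statement is the inequality $\totmultbi(H_2)-\totmultbi(H_1)\ge n/2-2$. Making this distinction explicit---rigid equality on the max side, tight lower bound on the total side---is exactly what upgrades the earlier existential argument to one that quantifies correctly over all optimal realizations.
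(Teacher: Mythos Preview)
The paper states this corollary without proof, as an immediate consequence of Lemmas~\ref{lemma:bitotgap} and~\ref{lemma:bimaxgap}; your derivation from those two lemmas is exactly the intended route, and your extraction of the four numerical values $\totmultbi(H_1)=n/2$, $\maxmultbi(H_1)=n/2+1$, $\maxmultbi(H_2)=2$, and $\totmultbi(H_2)\ge n-2$ matches what the lemmas actually establish.

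You go further than the paper in one respect, and you are right to do so. The corollary as written asserts the equality $\totmultbi(H_2)-\totmultbi(H_1)=n/2-2$ \emph{for any} max-optimal $H_2$, but Lemma~\ref{lemma:bimaxgap} only proves $\totmult(H_2)\ge n-2$, and its own proof exhibits a max-optimal realization under partition~(P1) with $\totmult=n-1$. So the universal equality cannot hold; what the lemmas support is the inequality $\totmultbi(H_2)-\totmultbi(H_1)\ge n/2-2$, with equality witnessed by the partition-(P2) realization $H''$ (which, as you note, requires $n\ge 6$). Your final paragraph makes precisely this distinction---rigid equality on the $\maxmult$ side, tight lower bound on the $\totmult$ side---and that is the correct reading of what Lemmas~\ref{lemma:bitotgap} and~\ref{lemma:bimaxgap} together deliver. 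The paper's phrasing of the corollary is loose on this point; your analysis is the more careful one.
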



\section{Bipartite Realization of a Single Sequence}
\label{sec:single-sequence-hardness}

In this section, we study the following question: given a degree sequence $d$, 
can it be realized as a multigraph whose underlying graph is bipartite?
Also, if there exists such a realization, we would like to find one which 
minimizes the maximum or the total multiplicity.


\subsection{Hardness Result}

Given a sequence and a balanced partition one may construct a bipartite 
multigraph realization by assigning edges in an arbitrary manner.

\begin{observation}
\label{obs:bi-sequence}
Let $d$ be a sequence and let $(\ell,r) \in \bp(d)$ be a partition
of $d$. Then, there exists a bipartite multigraph realization of $(\ell,r)$.
\end{observation}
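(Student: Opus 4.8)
The plan is to recognize that a bipartite multigraph realization of $(\ell,r)$ is nothing more than a non-negative integer matrix with prescribed row and column sums. Write $\ell = (\ell_1,\dots,\ell_p)$ and $r = (r_1,\dots,r_q)$, and let $x_{ij}\ge 0$ record the multiplicity of the edge joining the $i$-th left vertex to the $j$-th right vertex. A realization of $(\ell,r)$ is exactly a choice of integers $x_{ij}\ge 0$ satisfying $\sum_{j} x_{ij} = \ell_i$ for every $i$ and $\sum_{i} x_{ij} = r_j$ for every $j$. Since every edge of a bipartite graph runs between the two sides, no self-loop can arise and hence no multiplicity value is forbidden; the only compatibility requirement is that the two side-totals agree, i.e. $\vol{\ell} = \vol{r}$. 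This holds precisely because $(\ell,r)$ is a \emph{balanced} partition, so $\vol{\ell} = \vol{r} = m$ where $\vol{d} = 2m$.

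I would then build such a matrix explicitly by a greedy ``peeling'' (north-west corner) argument, by induction on $p+q$. For the base case, when $p=1$ (the case $q=1$ is symmetric), the single left vertex has degree $\vol{r}$, so connecting it to each right vertex $j$ by $r_j$ parallel edges realizes $(\ell,r)$ directly. For the inductive step, assume $p,q\ge 1$ and, swapping the two sides if needed, that $\ell_1 \le r_1$. Connect the first left vertex to the first right vertex by $\ell_1$ parallel edges. This saturates the first left vertex; delete it and replace $r_1$ by $r_1 - \ell_1 \ge 0$, discarding the first right vertex if this difference is $0$. The remaining left and right multisets both have total $m - \ell_1$ and strictly fewer vertices, so by the induction hypothesis they admit a realization; re-inserting the $\ell_1$ edges yields a realization of $(\ell,r)$.

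The point worth flagging is that there is essentially no obstacle to overcome. In sharp contrast to the simple bipartite case governed by the Gale--Ryser inequalities (\Cref{thm:Gale-Ryser}), the multigraph relaxation removes every structural obstruction: the sole necessary condition, equality of the two side-sums, is automatically satisfied by a balanced partition and is also sufficient. Thus the only thing that must be verified is that the peeling step preserves the invariant ``non-negative entries and equal side-totals,'' which it does by construction, with the positivity of the sequences maintained up to transient zero entries that are simply dropped. Since $p+q$ strictly decreases at each step, the induction terminates and produces the desired realization.
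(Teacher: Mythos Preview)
Your proof is correct. The north-west corner construction is a clean, explicit way to build the required non-negative integer matrix with prescribed margins, and your induction on $p+q$ is sound: the swap guarantees $r_1-\ell_1\ge 0$, the left side shrinks by one vertex at every step, and the equal-totals invariant is preserved.

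By way of comparison, the paper does not actually prove this observation at all; it is stated with only the one-line remark that ``one may construct a bipartite multigraph realization by assigning edges in an arbitrary manner.'' So your argument is strictly more detailed than what the paper provides. The paper's implicit view is simply that any greedy assignment works because parallel edges are allowed and self-loops cannot arise between the two sides; your matrix-with-prescribed-margins framing and explicit peeling make this precise. Either viewpoint suffices, and yours has the virtue of yielding an actual algorithm.
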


It follows that deciding whether a degree sequence $d$ can it be 
realized as a multigraph whose underlying graph is bipartite is NP-hard.

\begin{theorem}
\label{thm:bipartite-hard}
Deciding if a degree sequence $d$ admits a bipartite multigraph realization is NP-hard.
\end{theorem}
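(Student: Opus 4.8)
The plan is to reduce from the \textbf{Partition} problem, which asks whether a given multiset of positive integers $S = \{s_1, \ldots, s_k\}$ with $\sum_i s_i = 2W$ can be split into two sub-multisets each summing to $W$. This is the natural target because, by \Cref{obs:bi-sequence}, a sequence $d$ admits a bipartite multigraph realization \emph{if and only if} it admits a balanced partition $(\ell, r) \in \bp(d)$ with $\vol{\ell} = \vol{r}$. Thus the existence of a bipartite multigraph realization is essentially equivalent to the existence of a balanced partition of the degree sequence, and deciding the latter is exactly a \textbf{Partition}-type question on the degrees.

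The key steps, in order, are as follows. First I would take an arbitrary \textbf{Partition} instance $S$ and build from it a degree sequence $d$, arranged in non-increasing order, so that $d$ has a balanced partition precisely when $S$ has an equal-sum split. The most direct attempt is simply to let $d = (s_1, \ldots, s_k)$ (sorted), since $\vol{d} = 2W$ is even, so $d$ is a valid degree sequence, and a block of $d$ summing to $W$ corresponds exactly to a sub-multiset of $S$ summing to $W$. Second, I would invoke \Cref{obs:bi-sequence} to conclude that $d$ has a bipartite multigraph realization iff $\bp(d) \neq \emptyset$ iff $S$ is a yes-instance. Third, I would note that the reduction is clearly polynomial-time (it is essentially the identity map followed by sorting), completing the NP-hardness argument.

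The main obstacle I anticipate is a subtlety in the definition of the degree sequences and multigraphs being considered. The paper restricts attention to sequences of \emph{positive} integers, and more importantly requires realizations to be \emph{loopless} multigraphs. If a block has size one---i.e., a single degree $d_i$ sitting alone on one side---then realizing that vertex's degree without self-loops forces the $d_i$ edges to go to the other side, which is fine, but one must check that \Cref{obs:bi-sequence} genuinely produces a loopless realization in all cases (the observation as stated asserts exactly this, so I would lean on it directly rather than re-prove it). A more delicate point is whether the trivial reduction $d = S$ interacts badly with the Owens--Trent feasibility condition $d_1 \le \sum_{i \geq 2} d_i$; however, since \textbf{Partition} is the target and any balanced partition automatically yields $\vol{\ell} = \vol{r} = W$ with each side nonempty, the feasibility condition is not actually needed here---the equivalence runs purely through the existence of a balanced partition. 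The one place requiring genuine care is ensuring the standard $\mathcal{NP}$-completeness of \textbf{Partition} is being used in the weak (not strong) sense and that the numbers $s_i$ are encoded in binary, so that the reduction size is polynomial in the input bit-length; I would state this explicitly to avoid any ambiguity about input encoding.

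If the bare identity reduction turns out to have a degenerate edge case (for instance, a sequence whose only balanced partitions are forced to use self-loops, or where $\bp(d)$ is nonempty for trivial reasons unrelated to $S$), the fallback is to pad the instance: append a small number of auxiliary degrees, such as a pair of large equal ``anchor'' values forced onto opposite sides, so that any balanced partition of the padded sequence restricts to an equal-sum split of the original $S$ on the remaining coordinates. This padding preserves the iff while ruling out spurious partitions, and it remains polynomial. I expect the unpadded reduction to suffice, so this is a contingency rather than the primary line of attack.
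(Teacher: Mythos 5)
Your proposal is correct and is essentially identical to the paper's proof: both reduce from \textsc{Partition} via the (identity) map sending the instance to the degree sequence itself, and both invoke \Cref{obs:bi-sequence} to conclude that a balanced partition exists if and only if a bipartite multigraph realization does. The extra contingencies you raise (padding, self-loop edge cases) are unnecessary, since \Cref{obs:bi-sequence} already guarantees a loopless bipartite realization for any balanced partition.
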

\begin{proof}
We prove the theorem using a reduction from the \textsc{Partition} problem.
Recall that \textsc{Partition} contains all sequences
$(a_1,\ldots,a_n)$ such that there exists an index set $S \subseteq
[1,n]$ for which $\sum_{i \in S} a_i = \sum_{i \not\in S} a_i$
(see. e.g., \cite{GJ79}).
Observation~\ref{obs:bi-sequence} implies that $\seqd$ is a \textsc{Partition} 
instance if and only if $d$ admits a bipartite mulitgraph realization.
\end{proof}

\Cref{obs:bi-sequence} also implies a reduction from bipartite multigraph realization to \textsc{Partition}.
Since \textsc{Partition} admits a pseudo-polynomial time algorithm,
we have the following.

\begin{theorem}
\label{thm:bipartite-poly}
Deciding if a sequence $d$ admits a bipartite multigraph realization
can be done in pseudo-polynomial time.
\end{theorem}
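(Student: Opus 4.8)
The plan is to reduce the decision problem to the \textsc{Partition} problem by way of \Cref{obs:bi-sequence}, and then exploit the fact that \textsc{Partition} is solvable in pseudo-polynomial time. The observation already establishes that a sequence $d$ admits a bipartite multigraph realization if and only if $d$ has a balanced partition, i.e., if and only if $\bp(d) \neq \emptyset$. Thus deciding bipartite multigraph realizability is \emph{equivalent} to deciding whether $d$ can be split into two sub-sequences of equal volume, which is precisely the \textsc{Partition} question on the sequence $d$ itself.

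Concretely, I would argue as follows. By \Cref{obs:bi-sequence}, the sequence $d$ admits a bipartite multigraph realization if and only if there is a partition $(\ell,r) \in \bp(d)$, i.e., a subset $S \subseteq [1,n]$ with $\sum_{i \in S} d_i = \sum_{i \notin S} d_i = \vol{d}/2$. This is exactly the statement that $d$, viewed as an instance of \textsc{Partition}, is a ``yes'' instance. The standard dynamic-programming algorithm for \textsc{Partition} decides, in time $\mathcal{O}(n \cdot \vol{d})$, whether such a subset $S$ exists. Since $\vol{d} = \sum_{i=1}^n d_i$ is bounded by $n \cdot d_1$, this running time is polynomial in $n$ and the numeric value $\vol{d}$, hence pseudo-polynomial in the input size. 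Running this algorithm and returning its answer therefore decides bipartite multigraph realizability in pseudo-polynomial time.

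I do not expect any genuine obstacle here, since the entire content is delegated to \Cref{obs:bi-sequence} and to the well-known pseudo-polynomial algorithm for \textsc{Partition}; the proof is essentially a one-line reduction in each direction. The only point requiring a modicum of care is to confirm that the reduction is \emph{parsimonious enough} to transfer the pseudo-polynomial bound: the reduction maps $d$ to itself (as a \textsc{Partition} instance), so the numeric parameters are literally unchanged, and no blow-up in the magnitude of the numbers occurs. This makes the transfer of the pseudo-polynomial guarantee immediate.

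In writing this up I would keep it to two short sentences invoking \Cref{obs:bi-sequence} for the equivalence and then citing the pseudo-polynomial solvability of \textsc{Partition}, exactly mirroring the structure of the preceding hardness proof in \Cref{thm:bipartite-hard} (which uses the same observation in the opposite direction). The symmetry between the two theorems -- NP-hardness via a reduction \emph{from} \textsc{Partition} and pseudo-polynomial tractability via a reduction \emph{to} \textsc{Partition} -- is the clean takeaway, and the proof should make that parallel explicit.
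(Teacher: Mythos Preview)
Your proposal is correct and matches the paper's argument essentially verbatim: the paper simply notes that \Cref{obs:bi-sequence} yields a reduction from bipartite multigraph realizability to \textsc{Partition}, and then invokes the pseudo-polynomial algorithm for \textsc{Partition}. Your write-up is, if anything, more detailed than what the paper provides.
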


We note that there may be an exponential number of balanced partitions of a sequence $d$ 
even if $d_1 < n$ (see, e.g.,~{\cite{BBPR25-hl})}.
In bipartite multigraph realization it is enough to find any balanced partition.
However, in BDR the partition should also satisfy the Gale-Ryser conditions.


Next, we show that deciding whether a given sequence has a multigraph realization whose 
underlying graph belongs to a graph family is hard for any family which is a subfamily of bipartite graphs
and a super family of paths.
We start be defining the following variant of \partition we refer to as $\partition'$.
A sequence of integers $b = (b_1,\ldots,b_n)$ is in $\partition'$ 
if and only if 
\begin{enumerate}
\item $n$ is even.
\item There exists $B > 0$ such that $b_i \geq 2B$, for every $i \in \set{1,\ldots,n-2}$, 
        $b_{n-1} = b_n = B$, and $\sum_{i = 1}^n b_i = (2n-1)B$.
\item There exists an index set $S$ such that $\sum_{i \in S} b_i = \sum_{i \not\in S} b_i$.
\end{enumerate}

\begin{observation}
\label{obs:sum}
Let $d$ be a sequence that satisfies the first two conditions of $\partition'$, and let $S \subseteq \set{1,\ldots,n-2}$.
Then, $\sum_{i \in S} b_i \leq B (2\abs{S} + 1)$.
\end{observation}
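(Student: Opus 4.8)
The plan is to exploit the tight structure imposed by the first two conditions of $\partition'$: the $n-2$ ``large'' entries are each at least $2B$, yet their total is pinned down exactly, leaving very little slack. First I would compute the sum of the large entries. Since $b_{n-1} = b_n = B$ and the full volume is $(2n-1)B$, subtracting the two small entries gives $\sum_{i=1}^{n-2} b_i = (2n-1)B - 2B = (2n-3)B$. On the other hand, each of these $n-2$ entries is at least $2B$, so their minimum possible total is $2(n-2)B = (2n-4)B$. The gap between the actual total and this minimum is therefore exactly $B$; that is,
\[
\sum_{i=1}^{n-2}(b_i - 2B) = B,
\]
with every summand nonnegative.

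With this ``budget'' in hand, the bound for an arbitrary subset $S \subseteq \set{1,\ldots,n-2}$ follows immediately. I would split each selected entry into its baseline $2B$ and its nonnegative excess $b_i - 2B \geq 0$, writing
\[
\sum_{i \in S} b_i = 2B\abs{S} + \sum_{i \in S}(b_i - 2B).
\]
Since the excesses are nonnegative and sum to $B$ over all of $\set{1,\ldots,n-2}$, the excess restricted to $S$ is at most $B$. Substituting yields $\sum_{i \in S} b_i \leq 2B\abs{S} + B = B(2\abs{S}+1)$, exactly as claimed.

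There is no serious obstacle here; the argument is a one-line averaging/slack computation. The only point requiring care is the bookkeeping that isolates the total excess as precisely $B$ (not merely a bound), which is what makes the ``$+1$'' term in the inequality the correct tight constant rather than something looser. This exact-slack fact is presumably the feature of $\partition'$ that the subsequent hardness reduction relies on.
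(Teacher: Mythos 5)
Your proof is correct and is essentially the same argument as the paper's: the paper subtracts the complement (bounding each unselected large entry below by $2B$) from the total volume, which is algebraically identical to your decomposition $\sum_{i \in S} b_i = 2B\abs{S} + \sum_{i \in S}(b_i - 2B)$ with total excess exactly $B$. Your ``slack budget'' phrasing is just a cleaner presentation of the same one-line computation.
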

\begin{proof}
\[
\sum_{i \in S} b_i 
= \sum_{i=1}^n b_i - \sum_{i \not\in S, i<n-1} b_i - b_{n-1} - b_n
\leq (2n - 1) \cdot B  - (n - 2 - \abs{S}) \cdot 2B - 2B
=    B (2\abs{S} + 1)
~.
\]
\end{proof}

\begin{observation}
\label{obs:B}
Let $b \in \partition'$, and let $S$ be an index set such that $\sum_{i \in S} b_i = \sum_{i \not\in S} b_i$.
Then, $\abs{S} = n/2$ and $\abs{S \cap \set{n-1,n}} = 1$.
\end{observation}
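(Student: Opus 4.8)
The plan is to split the index set $S$ into its \emph{large} part $S' = S \cap \set{1,\ldots,n-2}$ and its \emph{small} part $S \cap \set{n-1,n}$, and then to pin down both cardinalities by sandwiching $\sum_{i \in S} b_i$ between a matching lower and upper bound. Since $\sum_{i \in S} b_i = \sum_{i \not\in S} b_i$ and the total sum is $\sum_{i=1}^n b_i = (2n-1)B$ by condition~(2), each side of the partition must equal exactly $(2n-1)B/2 = (n-\tfrac12)B$. This half-integer target is what will ultimately do the work.

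Write $s = \abs{S'}$ and $k = \abs{S \cap \set{n-1,n}} \in \set{0,1,2}$, so that $\sum_{i \in S} b_i = \sum_{i \in S'} b_i + kB$. For the lower bound I would use condition~(2) directly: each large element satisfies $b_i \geq 2B$, hence $\sum_{i \in S'} b_i \geq 2sB$. For the upper bound I would invoke Observation~\ref{obs:sum} applied to $S'$ (which is a subset of $\set{1,\ldots,n-2\}$, as required), giving $\sum_{i \in S'} b_i \leq B(2s+1)$. Combining the two bounds with the target yields the chain $2sB + kB \leq (n-\tfrac12)B \leq B(2s+1+k)$.

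Dividing through by $B$ gives $2s + k \leq n - \tfrac12 \leq 2s + 1 + k$. Here is the crux: the quantity $2s+k$ is an integer while $n-\tfrac12$ is a half-integer, so the left inequality forces $2s+k \leq n-1$ and the right forces $2s+k \geq n-1$, collapsing both into the single equality $2s+k = n-1$. Because $n$ is even by condition~(1), the value $n-1$ is odd, so $2s+k$ must be odd, which forces $k$ to be odd; since $k \in \set{0,1,2}$, this pins $k=1$, i.e.\ $\abs{S \cap \set{n-1,n}} = 1$. Substituting back gives $2s = n-2$, hence $s = n/2 - 1$ and $\abs{S} = s + k = n/2$, exactly as claimed.

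The only nontrivial ingredient is the upper bound, and this is precisely what Observation~\ref{obs:sum} supplies; everything else is a parity argument that exploits the carefully engineered volume $(2n-1)B$, whose oddness makes the target a half-integer and thereby squeezes the integer quantity $2s+k$ to a unique value. I therefore expect no real obstacle beyond bookkeeping the separation of large versus small indices throughout.
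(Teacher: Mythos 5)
Your proof is correct, and while it rests on the same two ingredients as the paper's own proof --- the decomposition of $S$ into $S' = S \cap \set{1,\ldots,n-2}$ and $S \cap \set{n-1,n}$, and the upper bound of Observation~\ref{obs:sum} --- it is organized along a genuinely different route. The paper argues by contradiction in four cases: it first rules out $\abs{S} \leq n/2 - 1$ and $\abs{S} \geq n/2 + 1$ (the latter by applying the same upper bound to the complement of $S$), and then, with $\abs{S} = n/2$ in hand, separately rules out $\set{n-1,n} \subseteq S$ and $\set{n-1,n} \cap S = \emptyset$. You instead pair the upper bound with the lower bound $\sum_{i \in S'} b_i \geq 2sB$ --- immediate from condition~(2), and not needed in the paper's proof, which leans on complement symmetry instead --- to sandwich the half-integer target $n - \tfrac{1}{2}$ via $2s + k \leq n - \tfrac{1}{2} \leq 2s + k + 1$, which by integrality collapses to the single equality $2s + k = n - 1$; parity of $n-1$ (using that $n$ is even, condition~(1)) then yields $k = 1$ and $s = n/2 - 1$ simultaneously. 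All steps check: $B > 0$ justifies dividing by $B$, $S'$ is a subset of $\set{1,\ldots,n-2}$ as Observation~\ref{obs:sum} requires, and both integrality deductions ($2s+k \leq n-1$ from the left inequality, $2s+k \geq n-1$ from the right) are sound. Your version buys a shorter, unified argument with no case analysis and no appeal to the complement, and it makes explicit why the engineered volume $(2n-1)B$ matters (its oddness creates the half-integer target); the paper's version is more pedestrian but needs only upper bounds. Either proof is complete, and yours is arguably the cleaner write-up.
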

\begin{proof}
Assume that $\abs{S} \leq n/2 - 1$, and let $S' = S \cap \set{1,\ldots,n-2}$ and $S'' = S \cap \set{n-1,n}$.
By \Cref{obs:sum} we have that
\[
\sum_{i \in S} b_i 
\leq B (\abs{S'} \cdot 2 + 1) + B \abs{S"}
\leq B (\abs{S} \cdot 2 + 1) 
\leq B (n - 2 + 1)
=    B (n - 1)
~.
\]
A contradiction.
A similar argument works for the case where $\abs{S} \geq n/2+1$.

Let $\abs{S} = n/2$ and assume that $\set{n-1,n} \subseteq S$. It follows that 
\[
\sum_{i \in S} b_i 
=    2B + \sum_{i \in S, i \leq n-2} b_i 
\leq 2B + (n/2-2)2B + B
=    (n - 1)B
~.
\]
A contradiction.
A similar argument works for the case where $\set{n-1,n} \cap S = \emptyset$.
\end{proof}

We show that this variant of \partition is NP-hard.

\begin{lemma}
$\partition'$ is NP-hard.
\end{lemma}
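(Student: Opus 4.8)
The plan is to prove NP-hardness of $\partition'$ by a reduction from the ordinary \partition problem. Given an arbitrary \partition instance $(a_1, \ldots, a_m)$ with total sum $\vol{a} = 2S$, I would engineer a new instance $b = (b_1, \ldots, b_n)$ that satisfies the three structural conditions of $\partition'$ (even length, a dominant block of large entries plus two equal small tail entries $b_{n-1} = b_n = B$ summing to $(2n-1)B$), in such a way that $b$ has a balanced bipartition if and only if $(a_1, \ldots, a_m)$ does. The natural device is to set $B$ to be a large common scaling factor (say $B = 2S$ or larger) and to embed the original values as a perturbation of the form $b_i = 2B + a_i$ for the ``large'' coordinates, while padding with copies of $2B$ and appending the two tail entries equal to $B$. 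The padding count and scaling are chosen so that condition~(2) holds exactly.

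The key steps, in order, are as follows. First, I would specify the construction explicitly: choose the number of large coordinates (which must force $n$ even), define each $b_i$ either as $2B$ or as $2B + a_j$ encoding the original numbers, fix $b_{n-1} = b_n = B$, and then solve for $B$ and the number of padding entries so that $\sum_i b_i = (2n-1)B$ and all large entries are $\ge 2B$. Second, I would verify the forward direction: any balanced partition $S$ of the original instance lifts to a balanced index set of $b$, using \Cref{obs:B} to know that exactly one of the two tail elements $\{n-1, n\}$ lands on each side, so the two tail entries of value $B$ split the ``base'' contribution symmetrically and leave the embedded $a_j$ values to balance. Third, I would verify the reverse direction: given a balanced partition of $b$, \Cref{obs:B} guarantees $\abs{S} = n/2$ and that exactly one tail element sits on each side; subtracting the fixed base contribution of $2B$ per large coordinate (which is split evenly since $\abs{S \cap \{1, \ldots, n-2\}} = n/2 - 1$ on each side once we remove the tail) leaves precisely a balanced partition of the residual $a_j$ values, recovering a solution to the original \partition instance.

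The main obstacle I anticipate is arithmetic bookkeeping to make condition~(2) hold \emph{exactly} — the equation $\sum_i b_i = (2n-1)B$ is a rigid constraint that couples the number of large coordinates, the scaling factor $B$, and the total $\sum_j a_j$, so I would need to choose these parameters carefully (likely absorbing $\sum_j a_j$ into the definition of $B$ and the number of pure-padding entries $2B$) to hit the target sum while keeping every large entry at least $2B$ and keeping $n$ even. A subtle point is that once the base value $2B$ dominates, the embedded values $a_j$ must be small relative to $B$ so they cannot by themselves tip the balance; this is exactly what \Cref{obs:sum} and \Cref{obs:B} are set up to enforce, and I would lean on them to show that the only freedom in partitioning $b$ is the choice encoded by the $a_j$ residuals. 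Once the parameters are pinned down, both directions of the equivalence follow by direct substitution, so the genuine difficulty is confined to designing the gadget rather than to the correctness argument itself.
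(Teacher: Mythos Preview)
Your proposal is correct and follows essentially the same approach as the paper: the paper sets $B = \sum_j a_j$, defines $b_j = 2B + a_j$ for $j \le n$, pads with $n$ further copies of $2B$, appends $b_{2n+1} = b_{2n+2} = B$, and then uses \Cref{obs:B} exactly as you describe to handle the reverse direction. The only refinement you would need is to pin down the padding count (the paper uses exactly $n$ pure-$2B$ entries so that the new length is $2n+2$ and the total becomes $(4n+3)B$), which resolves the arithmetic bookkeeping you flagged as the main obstacle.
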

\begin{proof}
We prove the theorem by a reduction from \textsc{Partition}.
Given a sequence $a = (a_1,\ldots,a_n)$, where $B = \sum_i a_i$, we
construct the following degree sequence $b$ as follows:
\[
b_j = \begin{cases}
2B + a_j & j \in \set{1,\ldots,n}, \\
2B         & j \in \set{n+1,\ldots,2n}, \\
B           & j \in \set{2n+1,2n+2}.
\end{cases}
\]
The length of $b$ is $2n+2$ which is even.
Observe that $b_i \geq 2B$, for every $i \in \set{1,\ldots,2n}$, 
and $b_{2n+1} = b_{2n+2} = B$. Also, 
\[
\sum_{i = 1}^{2n+2} b_i 
= 4nB + \sum_i a_i + 2B 
= (4n + 3)B 
= (2(2n+2) - 1)B
~.
\]
Hence it remains to show that $a \in \partition$ if and only if there exists 
an index set $S$ such that $\sum_{i \in S} b_i = \sum_{i \not\in S} b_i$ 
and $\abs{S} = n+1$.

Suppose that $a \in \partition$ and let $T$ be an index set such that 
$\sum_{i \in T} a_i = \sum_{i \not\in T} a_i$. 
Let $S = T \cup \set{i + n : i \not\in T} \cup \set{2n+1}$.
Observe that $\abs{S} = n + 1 $ and
\[
\sum_{i \in S} b_i 
= \sum_{i \in T} (2B+a_i) + (n-\abs{T}) 2B + B
= (2n+1)B + B/2
~,
\]
As required.

On the other hand, assume that $b \in \partition'$ and let $S$ be an index 
set such that $\sum_{i \in S} b_i = \sum_{i \not\in S} b_i$ and $\abs{S} = n+1$.
By \Cref{obs:B} we may assume, without loss of generality, that $2n+1 \in S$ and 
$2n+2 \not\in S$.
Let $T = S \cap \set{1,\ldots,n}$. We have that
\[
\sum_{i \in T} a_i 
= \sum_{i \in T} (b_i - 2B)
= \sum_{i \in S} b_i - \abs{T} 2B - (n-\abs{T})2B - B
= (2n+1)B + B/2 - 2nB - B
= B/2
~.
\]
\end{proof}

It is said that a sequence $b$ has a \emph{sound} permutation if the following conditions hold:
\begin{enumerate}
\item $\sum_{i=1}^{n} (-1)^i b_{\pi(i)} = 0$.
  \label{sound:eq} 
  
\item $\sum_{i=1}^{k} (-1)^{i-1} b_{\pi(k-i+1)} > 0$, for all $k < n$.
  \label{sound:2}
\end{enumerate}

Next, we show that a sequence $b \in \partition'$ has a sound permutation.

\begin{lemma}
\label{lemma:sound}
If $b \in \partition'$, then $b$ admits a sound permutation.
\end{lemma}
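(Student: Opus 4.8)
The plan is to view a candidate permutation through its signed partial sums and thereby turn soundness into a positivity statement. Given $\pi$, set $c_k = b_{\pi(k)}$ and define $s_1 = c_1$, $s_k = c_k - s_{k-1}$; unrolling the recursion gives
\[
s_k \;=\; \sum_{i=1}^{k} (-1)^{i-1} c_{k-i+1} \;=\; c_k - c_{k-1} + c_{k-2} - \cdots,
\]
which is exactly the expression in the second defining condition of a sound permutation. Since $n$ is even, a short reindexing shows $\sum_{i=1}^n (-1)^i b_{\pi(i)} = s_n$, so the first condition is simply $s_n = 0$. Hence it suffices to order the entries of $b$ as $c_1,\ldots,c_n$ with $s_k > 0$ for every $k < n$ and $s_n = 0$; reading the $s_k$ as edge multiplicities, this is the same as realizing $b$ by a path multigraph.

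First I would fix the two sides. By \Cref{obs:B} there is an index set $S$ with $\abs{S} = n/2$ containing exactly one of the two small entries $b_{n-1} = b_n = B$; say $b_n \in S$ and $b_{n-1}\notin S$. The equality $\sum_{i\in S} b_i = \sum_{i\notin S} b_i = (2n-1)B/2$ forces $B$ to be even (as $2n-1$ is odd). I then build $\pi$ by sending the indices in $\bar S \bydef \set{1,\ldots,n}\setminus S$ to the odd positions $1,3,\ldots,n-1$ and those in $S$ to the even positions $2,4,\ldots,n$, placing the small entry $b_{n-1}$ at position $1$ and the small entry $b_n$ at position $n$, and distributing the $n-2$ large entries arbitrarily among the remaining positions of their own side. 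The odd positions then carry exactly the multiset $\bar S$ and the even positions exactly $S$, so the two have equal sum and $s_n = \sum_i (-1)^i c_i = 0$ holds automatically; the first condition is met for free, independently of the internal order.

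The positivity is where the arithmetic of $\partition'$ does the work. Writing each large entry as $2B + \epsilon_i$ with $\epsilon_i \ge 0$, the volume constraint $\sum_i b_i = (2n-1)B$ yields total excess $\sum_i \epsilon_i = (2n-3)B - (n-2)\cdot 2B = B$, and since each side of the partition has one small entry, $n/2 - 1$ large entries, and sum $(2n-1)B/2$, each side carries excess exactly $B/2$. Unrolling $s_k$ from $c_1 = B$, the baseline and the $2B$'s combine (using $n$ even) to give $s_k = B + \mu_k$, where $\mu_k = \sum_{j=2}^{k} (-1)^{k-j}\epsilon_j$ is the alternating sum of the excesses at positions $2,\ldots,k$ (all large, since the only small positions are $1$ and $n$). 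In $\mu_k$ the positively signed excesses all sit on positions of one parity, hence on one side of the partition, and the negatively signed ones on the other; each group sums to at most $B/2$, so $-B/2 \le \mu_k \le B/2$ for \emph{every} internal ordering. Therefore $s_k \ge B/2 \ge 1 > 0$ for all $k \le n-1$, giving the second condition, while at $k = n-1$ all excesses have appeared and the two sides contribute exactly $+B/2$ and $-B/2$, so $\mu_{n-1} = 0$ and $s_{n-1} = B = c_n$, consistent with $s_n = 0$.

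The main obstacle is simply to recognize that the partition hypothesis (the third condition of $\partition'$) is used twice over, not merely to make $s_n = 0$: because each side necessarily holds the same number of large entries, balancing the two side-sums is equivalent to balancing their excesses at $B/2$ apiece, and it is precisely this even split of the total excess $B$ that confines the running alternating sum $\mu_k$ to $[-B/2,B/2]$. Without it the excess could pile up on one parity class and drive some $\mu_k$ down to $-B$, forcing a vanishing multiplicity $s_k = 0$; the tiny excess budget $B$ together with its forced $B/2$–$B/2$ split is exactly what rules this out. Once this is seen, no delicate ordering argument is required — any assignment respecting the two sides works — and the remaining steps are routine bookkeeping.
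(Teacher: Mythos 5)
Your proof is correct and takes essentially the same route as the paper: the identical permutation (the two entries equal to $B$ at positions $1$ and $n$, with $S$ and its complement placed on the two parity classes, internal order arbitrary), and the identical key estimate that each side's total excess over the $2B$ baseline is exactly $B/2$, which keeps every alternating partial sum at least $B/2>0$. Your bookkeeping via $c_j = 2B+\epsilon_j$ and the bound $\abs{\mu_k}\leq B/2$ is merely a cleaner repackaging of the paper's direct two-case (odd/even $k$) computation.
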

\begin{proof}
Let $S$ be an index set such that $\sum_{i \in S} b_i = \sum_{i \not\in S} b_i$ and $\abs{S} = n/2$.
By \Cref{obs:B} we may assume, without loss of generality, that $n-1 \in S$ and 
$n \not\in S$.
We define permutation $\pi$ as follows.
First, let $\pi(1) = n-1$, and $\pi(n) = n$. Also, assign the remaining $n/2 - 1$ members of $S$ to odd 
indices. The remaining $n/2 - 1$ non-members of $S$ are assigned to even indices.

Condition~\ref{sound:eq} is satisfied, since
\[
\sum_{i : \pi(i) \text{ is odd}} b_i 
= \sum_{i \in S} b_i 
= \sum_{i \not\in S} b_i 
= \sum_{i : \pi(i) \text{ is even}} b_i 
~.
\]

It remains to prove that Condition~\ref{sound:2} is satisfied.
If $k$ is odd, we have that
\[
\sum_{i=1}^{k} (-1)^{i-1} b_{\pi(k-i+1)}
=      \sum_{j = 1}^{(k+1)/2} \!\! b_{2j-1} - \!\! \sum_{j = 1}^{(k-1)/2} \!\! b_{2j} 
\geq [B + (k-1)/2 \cdot 2B] - [(k-1)/2 \cdot 2B + B/2]
\geq B/2
~.
\]
Otherwise, 
\[
\sum_{i=1}^{k} (-1)^{i-1} b_{\pi(k-i+1)}
=      -\sum_{j = 1}^{k/2} b_{2j-1} + \sum_{j = 1}^{k/2} b_{2j} 
\geq -[B + (k/2 - 1) \cdot 2B + B/2] + [k/2 \cdot 2B] 
\geq B/2
~.
\]
The lemma follows.
\end{proof}

We are now ready for the Hardness result regarding bipartite multigraphs.

\begin{theorem}
\label{thm:hard}
Let $\calF$ be a family of bipartite graphs which contains all paths.
It is NP-hard to decide if a degree sequence $d$ admits a multigraph realization
whose underlying graph is in $\calF$.
\end{theorem}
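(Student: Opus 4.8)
The plan is to reduce from $\partition'$, which was just shown to be NP-hard, by constructing a degree sequence $d$ from a $\partition'$ instance $b$ such that $d$ admits a multigraph realization whose underlying graph lies in $\calF$ if and only if $b$ has a sound permutation (equivalently, $b \in \partition'$). The key structural observation I would exploit is that a \emph{path} is the ``thinnest'' bipartite graph, so if $\calF$ contains all paths and is contained in the bipartite graphs, the family is squeezed between these two extremes; I want to design $d$ so that the \emph{only} possible underlying graph in $\calF$ realizing $d$ is forced to be a single path. This is why \Cref{lemma:sound} was proved: a sound permutation is precisely an ordering of the $b_i$ along a path so that the alternating partial sums stay positive, which is exactly the feasibility condition for laying out the degrees on a path via a multigraph whose edge multiplicities are determined greedily.

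Concretely, I would take the sequence $b = (b_1,\ldots,b_n)$ and let each $b_i$ become the degree of a single vertex $v_i$, so $d = b$ (possibly after sorting into non-increasing order). A realization whose underlying graph is a path must arrange the $n$ vertices in a line $v_{\pi(1)} - v_{\pi(2)} - \cdots - v_{\pi(n)}$, where consecutive vertices are joined by some positive number of parallel edges $m_j = \abs{E(v_{\pi(j)},v_{\pi(j+1)})} \ge 1$. The degree constraints then read $m_1 = b_{\pi(1)}$, $m_{j-1}+m_j = b_{\pi(j)}$ for interior vertices, and $m_{n-1}=b_{\pi(n)}$. Solving this recurrence gives $m_j = \sum_{i=1}^{j}(-1)^{j-i} b_{\pi(i)}$, and the requirement $m_j \ge 1 > 0$ for every $j<n$ together with the closing constraint $m_{n-1}=b_{\pi(n)}$ is exactly Conditions~\ref{sound:2} and~\ref{sound:eq} of a sound permutation (after a reindexing matching the reversed partial sums in the definition). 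Thus a path realization exists iff $b$ has a sound permutation, which by \Cref{lemma:sound} holds whenever $b\in\partition'$.

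The remaining and more delicate direction is to argue that \emph{no} bipartite underlying graph other than a path can realize $d$, so that allowing all of $\calF$ (up to the full bipartite family) does not create spurious yes-instances. Here is where the special numerical structure of $\partition'$ does the work: the volume is $\vol{d}=(2n-1)B$, so any realization has exactly $(2n-1)B/2$ edge-slots, while the two small degrees $b_{n-1}=b_n=B$ and the lower bound $b_i\ge 2B$ on the rest tightly constrain how edges can be distributed. I would show that the sum of degrees forces the number of edges in the underlying simple bipartite graph to be so small (relative to what any non-path bipartite structure on $n$ vertices would demand) that the underlying graph can have at most $n-1$ edges and must be connected, hence is a tree, and the degree constraints force that tree to be a path. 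The main obstacle is making this counting rigorous: I expect to use that a connected bipartite graph on $n$ vertices that is not a path either contains a vertex of degree $\ge 3$ or a cycle, and then derive a contradiction with the edge budget implied by $\vol{d}=(2n-1)B$ and the two forced degree-$B$ leaves via \Cref{obs:B}. Once the underlying graph is pinned to a path, the equivalence with a sound permutation closes the reduction, and since $\partition'$ is NP-hard the theorem follows for every family $\calF$ between paths and bipartite graphs.
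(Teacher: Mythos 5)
Your forward direction is exactly the paper's: map $b$ to $d=b$ (sending syntactically invalid instances to an unrealizable dummy sequence), and use \Cref{lemma:sound} to lay the degrees out on a path with multiplicities $m_j$ given by the alternating partial sums, which Conditions~\ref{sound:eq} and~\ref{sound:2} keep positive. That part is correct and matches the paper's construction.

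The gap is in your converse. You set out to prove that \emph{no} bipartite underlying graph other than a path can realize $d$, via an edge-budget argument forcing at most $n-1$ underlying edges, connectivity, and hence a path. This claim is false, and the counting cannot be made rigorous: the volume $\vol{d}=(2n-1)B$ bounds the number of edges \emph{with multiplicity}, but places no useful upper bound on the number of \emph{distinct} edges in the underlying simple graph. Indeed, by \Cref{obs:bi-sequence}, once $b$ has a balanced partition you can assign edges across it arbitrarily, producing realizations whose underlying bipartite graphs are dense, disconnected, or tree-like but not paths; when $\calF$ is the family of all bipartite graphs these are perfectly legitimate witnesses, so uniqueness of the path realization simply does not hold. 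The good news is that uniqueness is also unnecessary, and this is where your proposal diverges from what actually closes the argument. The paper's converse is one line: if $d$ has a multigraph realization with underlying graph $G\in\calF$, then $G$ is bipartite, and in a bipartite multigraph every edge has one endpoint on each side, so the degree sums of the two sides are equal; this yields an index set $S$ with $\sum_{i\in S}b_i=\sum_{i\notin S}b_i$, i.e., Condition~3 of $\partition'$, while Conditions~1 and~2 hold because the reduction only outputs $d=b$ when they do. So the ``squeeze'' between paths and bipartite graphs is exploited asymmetrically --- paths for the yes-direction, bare bipartiteness for the no-direction --- and your attempt to force the underlying graph to be a path is a wrong turn that no amount of counting with $B$ and the two degree-$B$ vertices will repair.
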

\begin{proof}
We prove the theorem using a reduction $f$ from \partition'.
The reduction is as follows: $d = f(b) = b$, 
if $n$ is even and there exists $B > 0$ such that $b_i \geq 2B$, for every $i \in \set{1,\ldots,n-2}$, 
 $b_{n-1} = b_n = B$, and $\sum_{i = 1}^n b_i = (2n-1)B$.
Otherwise, $d = f(b) = d_0$, where $d_0$ is a sequence that cannot be realized.
Hence, we need to show that $b \in \partition'$ if and only if 
$d$ is realizable using an underlying graph from $\calF$.

First assume that $b \in \partition'$. In this case, $d = b$.
Hence, there exists a sound permutation $\pi$ for $d$.
Define the following multigraph $H$ whose underlying graph $G$ is a path.
The number of edges between $v_{\pi(k)}$ and $v_{\pi(k+1)}$ is 
\[
\sum_{i=1}^{k} (-1)^{i-1} d_{\pi(k-i+1)}
~.
\]
Since $\pi$ is sound, these numbers are positive.
It is not hard to verify that $H$ realizes $d$.

Now assume that $d$ realizable by a graph $G$ from $\calF$. It follows that $d = f(b) = b$.
It follows that $n$ is even and there exists $B > 0$ such that $b_i \geq 2B$, 
for every $i \in \set{1,\ldots,n-2}$, $b_{n-1} = b_n = B$, and $\sum_{i = 1}^n b_i = (2n-1)B$.
Since $G \in \calF$, it is bipartite, and thus there are two partitions $L$ and $R$, such that
\[
\sum_{j \in L} d_j = \sum_{j \in R} d_j
~.
\]
Hence, $b \in \partition'$.
\end{proof}

\begin{corollary}
\label{cor:hard}
It is NP-hard to decide if a degree sequence $d$ admits a multigraph realization
whose underlying graph is 
a path, a caterpillar, a bounded-degree tree, a tree, a forest, and a connected bipartite graph.
\end{corollary}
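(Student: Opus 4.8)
The plan is to derive this corollary directly from Theorem~\ref{thm:hard}, which asserts NP-hardness for \emph{any} family $\calF$ that simultaneously (i) consists only of bipartite graphs, and (ii) contains every path. Thus it suffices to verify these two closure properties for each of the six families named in the corollary. For bipartiteness, I would observe that trees and forests are acyclic, hence have no odd cycle and are bipartite; paths and caterpillars are themselves trees, so they inherit bipartiteness; bounded-degree trees are trees; and connected bipartite graphs are bipartite by definition. Condition~(i) therefore holds for all six families at once.

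Next I would verify that each family contains all paths. A path $P$ is trivially a path; it is a caterpillar (a caterpillar whose spine is $P$ itself, with no additional legs); it is a tree; it is a forest; and it is a connected bipartite graph. Hence the inclusion of all paths is immediate in five of the six cases, and Theorem~\ref{thm:hard} applies to each of them without further comment. Combined with the bipartiteness check above, this already establishes NP-hardness for paths, caterpillars, trees, forests, and connected bipartite graphs.

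The one case requiring care---and the only place I expect a genuine subtlety---is the family of bounded-degree trees. Interpreting this as the trees whose maximum degree is at most a fixed constant $\Delta$, the inclusion of all paths holds precisely when $\Delta \geq 2$, since every path has maximum degree at most $2$. This is fully consistent with the reduction underlying Theorem~\ref{thm:hard}: the realizing multigraph constructed there from a sound permutation has a single path as its underlying graph, so its maximum degree is $2$, and no larger degree bound is ever required. Fixing any $\Delta \geq 2$ thus makes condition~(ii) hold for bounded-degree trees, and Theorem~\ref{thm:hard} completes the argument. The overall proof is therefore a routine instantiation of the general hardness theorem to each listed family, with the bounded-degree constraint being the only hypothesis that must be checked against the path produced by the reduction.
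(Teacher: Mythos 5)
Your proposal is correct and matches the paper's (implicit) argument exactly: the corollary is a direct instantiation of Theorem~\ref{thm:hard}, checking that each listed family consists of bipartite graphs and contains all paths, which the paper leaves without further proof. Your extra care with the bounded-degree case (noting that any fixed degree bound $\Delta \geq 2$ suffices, since the reduction's realizing multigraph is a path) is a valid and worthwhile clarification of the only nontrivial check.
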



\subsection{Computing all Balanced Partitions of a Degree Sequence}

We describe an algorithm that given a degree sequence $d$, computes 
all balanced partitions of $d$. The algorithm relies on the self-reducibility 
of the \textsc{Subset-Sum} problem. Recall that in \textsc{Subset-Sum}
the input is a sequence of numbers $(a_1,\ldots,a_n)$ and an additional number $t$, 
and the question is whether there is a subset $S$ such that $\sum_{i \in S} a_i = t$.
Let Subset-Sum-DP be a dynamic programming algorithm for \textsc{Subset-Sum}
whose running time is denoted by $T_{DP}(a,t)$ (see, e.g., \cite{CLRS09}).
The running time of the dynamic programming algorithm can be bounded by 
$O(n \cdot \min\set{\sum a, \abs{\bp(a)}})$.

In this section we abuse notation by presenting a sequence $d$ as a sequence of $q$ 
blocks, namely $d = (d_1^{n_1}, d_2^{n_2},\ldots,d_q^{n_q})$.
The algorithm for computing all balanced partitions of a sequence $\seqd$ is recursive, 
and it works as follows. 
The input is a suffix of $d$, i.e., $(d_k^{n_k},\ldots,d_q^{n_q})$, represented by 
$d$ and $k$, and a partition $(L,R)$ of the prefix $(d_1^{n_1},\ldots,d_{k-1}^{n_{k-1}})$.
If the current suffix is empty, then it checks whether the current partition is balanced, 
and if it is balanced, then the partitioned is returned. 
Otherwise, it checks whether the current partition can be completed to a balanced partition. 
If the answer is YES, then the algorithm is invoked for the $n_k+1$ options of adding the 
$n_k$ copies of $d_k$ to $(L,R)$.
The initial call is $(d,1,\emptyset,\emptyset)$.

\begin{algorithm}[ht]
\caption{Partitions$(d,k,L,R)$}
\label{alg:partitions}
	\eIf {$k = q+1$}{
		\lIf {$(L,R) \in \bp(d)$}{
			\Return $\set{(L,R)}$
		}
	}{
		\lIf {$\text{Subset-Sum-DP}((d_k^{n_k},\ldots,d_q^{n_q}),\sum L - \sum R) = \text{NO}$}{
			\Return $\emptyset$
		}
	}
	$\cP \gets \emptyset$ \\
	\For {$i=0$ to $n_k$}{
		$L' \gets L \circ (d_k^i)$ \\
		$R' \gets L \circ (d_k^{n_k-i})$ \\
		$\cP \gets  \cP \cup \text{Partitions}(d,k+1,L',R')$ 
	}	
	\Return $\cP$
\end{algorithm}

\begin{lemma}
Algorithm \text{Partitions} returns all balanced partitions of $d$.
\end{lemma}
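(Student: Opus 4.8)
The plan is to prove correctness of Algorithm~\ref{alg:partitions} by induction on the recursion, establishing the invariant that a call $\text{Partitions}(d,k,L,R)$ returns exactly those balanced partitions $(L'',R'') \in \bp(d)$ that extend the current prefix assignment, i.e. for which $L \subseteq L''$ and $R \subseteq R''$ on the first $k-1$ blocks. First I would set up the notation carefully: the prefix $(d_1^{n_1},\ldots,d_{k-1}^{n_{k-1}})$ has already been split between $L$ and $R$, while the suffix $(d_k^{n_k},\ldots,d_q^{n_q})$ is still to be assigned. Since within a block the copies of $d_k$ are indistinguishable, the only choice that matters is how many of the $n_k$ copies go to $L$; this is exactly the $n_k+1$ branches of the \textbf{for} loop, ranging over $i=0,\ldots,n_k$. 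Thus the recursion tree enumerates, without omission or repetition, every possible way of distributing all blocks between the two sides.

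The key steps, in order, are as follows. At the base case $k = q+1$ the suffix is empty, so $(L,R)$ is a complete partition of $d$; the algorithm tests membership in $\bp(d)$ and returns the singleton $\set{(L,R)}$ exactly when $\sum L = \sum R$, which is precisely the defining condition of a balanced partition. For the inductive step I would argue two things. First, completeness: by the induction hypothesis each recursive call $\text{Partitions}(d,k+1,L',R')$ returns all balanced partitions extending $(L',R')$, and since every balanced extension of $(L,R)$ must place some number $i \in \set{0,\ldots,n_k}$ of copies of $d_k$ into $L$, the union over the loop collects all of them. Second, correctness of the pruning: the line invoking Subset-Sum-DP returns $\emptyset$ precisely when no sub-multiset of the remaining suffix $(d_k^{n_k},\ldots,d_q^{n_q})$ sums to $\sum L - \sum R$. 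I would show that such a sub-multiset is exactly what is needed to equalize the two sides: a balanced completion must add a total of $m - \sum L$ to $L$ and $m - \sum R$ to $R$ from the suffix, and since the suffix sum is fixed, equalization is possible if and only if some subset of the suffix has sum $\sum L - \sum R$. Hence the pruning discards only subtrees that contain no balanced partition, so it never removes a valid output.

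The main obstacle I expect is making the pruning argument airtight, namely verifying that \enquote{the current partition can be completed to a balanced partition} is logically equivalent to the Subset-Sum query $\sum L - \sum R$ having a positive answer over the suffix. The subtlety is that Subset-Sum asks for a subset summing to a \emph{target}, whereas balancing requires splitting the entire remaining suffix so that, after the split, the two running totals coincide. I would reconcile these by observing that if $W = \sum_{j=k}^q d_j^{n_j}$ denotes the suffix sum, then a split assigning a sub-multiset of sum $s$ to $R$ (and the rest to $L$) yields a balanced partition iff $\sum L + (W - s) = \sum R + s$, i.e. iff $s = \tfrac{1}{2}(\sum L - \sum R + W)$; a short calculation shows this is the same feasibility condition as the stated query once one accounts for the algorithm's target value, and I would double-check the off-by-sign and the parity/nonnegativity constraints (the target must be an achievable nonnegative value) so that the equivalence is exact. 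A minor additional point to address for full rigor is the apparent typo in the loop body, where $R'$ is built from $L$ rather than $R$; under the intended reading $R' \gets R \circ (d_k^{n_k - i})$ the two sides together use all $n_k$ copies, which is what the counting argument requires, and I would state this explicitly so the invariant $\sum L' + \sum R' = \sum_{j=1}^{k} d_j^{n_j}$ is maintained.
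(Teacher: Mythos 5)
Your proof is correct and follows essentially the same route as the paper's: induction over the recursion tree, with the base case $k=q+1$ checking membership in $\bp(d)$, the $n_k+1$ loop branches giving completeness, and the Subset-Sum pruning giving soundness (the paper's proof states exactly this invariant, that each call returns all balanced completions of its prefix). The two subtleties you flag are real imprecisions in the paper that its own proof silently elides --- the line $R' \gets L \circ (d_k^{n_k-i})$ is indeed a typo for $R' \gets R \circ (d_k^{n_k-i})$, and the literal query target $\sum L - \sum R$ does not by itself encode completability (e.g., suffix $(5,3)$ with $\sum L - \sum R = 2$ admits a balanced completion though no sub-multiset sums to $2$), so the correct reading is your condition $s = \tfrac{1}{2}\left(\sum L - \sum R + W\right)$, which the paper sidesteps by treating Subset-Sum-DP as an oracle for ``$(L,R)$ can be completed to a balanced partition''; making that adjustment explicit, as you propose, is exactly what is needed for the pruning step to be airtight.
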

\begin{proof}
Observe that each recursive call of the algorithm corresponds to a partition of 
a prefix of $d$.
We prove that, given a prefix partition, the algorithm returns all of its balanced completions.

At the recursion base, if $(L,R)$ is a partition of $d$, 
then it is returned if and only if  $(L,R) \in \bp(d)$.
For the inductive step, let $(L,R)$ be a partition of the prefix 
$(d_1^{n_1},\ldots,d_{k-1}^{n_{k-1}})$.
If $(L,R)$ gets a NO from Subset-Sum-DP, then it cannot be completed to a balanced partition, 
and indeed no partition that corresponds to the prefix $(L,R)$ is returned.
If $(L,R)$ gets a YES, then all possible partitions of $(d_1^{n_1},\ldots,d_k^{n_k})$ are checked.
By the inductive hypothesis the algorithm returns all balanced partitions that complete 
$(d_1^{n_1},\ldots,d_{k-1}^{n_{k-1}})$.
\end{proof}

The complexity of Algorithm Partitions is dominated by the total time
spent on the invocations of Subset-Sum-DP, therefore we need to bound  the
number of invocations of Subset-Sum-DP. 
More specifically, we show the following bound.

\begin{lemma}
Algorithm \text{Partition} invokes Subset-Sum-DP at most $2n \cdot \abs{\bp(d)}$ times.
\end{lemma}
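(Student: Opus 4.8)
I would first recast the quantity to be bounded in terms of the recursion tree of \text{Partitions}. Identify each recursive invocation $\text{Partitions}(d,k,L,R)$ with a \emph{node} at \emph{level} $k$; its prefix partition $(L,R)$ of $(d_1^{n_1},\ldots,d_{k-1}^{n_{k-1}})$ determines it uniquely. Subset-Sum-DP is called exactly once per node with $k\le q$, so the number of DP invocations equals the number of such nodes that are actually reached. The crucial structural fact is that a node at level $k+1$ is reached only if its parent at level $k$ passed the DP test, i.e.\ received a \textbf{YES}; call such parents \emph{success nodes}. A node that receives \textbf{NO} returns $\emptyset$ immediately and spawns no descendants, and a success node at level $k$ spawns exactly $n_k+1$ children (one for each $i=0,\ldots,n_k$). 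Writing $s_k$ for the number of success nodes at level $k$, this gives the level-by-level recurrence: the number of reached nodes at level $k+1$ equals $(n_k+1)\,s_k$, whence the total number of DP calls is $1+\sum_{k=1}^{q-1}(n_k+1)\,s_k$.

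\textbf{The key per-level bound.}
The heart of the argument is to show that $s_k$ is bounded by the number of balanced partitions, \emph{for every level $k$ separately}. I would argue as follows: if a node at level $k$ is a success node, then by correctness of Subset-Sum-DP its prefix $(L,R)$ can be completed, using the suffix $(d_k^{n_k},\ldots,d_q^{n_q})$, to a balanced partition of $d$. Distinct success nodes at the same level $k$ have \emph{distinct} prefixes, and a balanced partition of $d$ has a uniquely determined prefix at each level. Consequently, assigning to each level-$k$ success node one of its balanced completions is an injection into the set of balanced partitions, which yields $s_k\le\abs{\bp(d)}$ (after the ordered-versus-unordered bookkeeping discussed below).

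\textbf{Combining the estimates.}
Plugging $s_k\le\abs{\bp(d)}$ into the count from the first paragraph gives
\[
\#\text{DP calls} \;=\; 1+\sum_{k=1}^{q-1}(n_k+1)\,s_k
\;\le\; \abs{\bp(d)}\cdot\sum_{k=1}^{q}(n_k+1)
\;=\; \abs{\bp(d)}\,(n+q)
\;\le\; 2n\cdot\abs{\bp(d)},
\]
where the last step uses $\sum_{k=1}^{q} n_k = n$ together with $q\le n$. I expect the main obstacle to be tracking the constant cleanly rather than the qualitative argument: the algorithm generates \emph{ordered} prefixes $(L,R)$, so a naive injection lands in the set of ordered balanced partitions, which has size up to $2\abs{\bp(d)}$. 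Recovering the stated factor $2n$ (rather than $4n$) requires being careful about this symmetry---for instance by exploiting that only the first $q-1$ block-levels branch (so $\sum_{k=1}^{q-1}(n_k+1)\le n+q-2\le 2n-2$) and by pairing the symmetric prefixes $(L,R),(R,L)$ so that the injection effectively targets $\bp(d)$ itself. I would isolate this bookkeeping as the one step deserving full care, since the combinatorial skeleton---DP is called only below a success node, and success nodes inject into $\bp(d)$ level by level---is what drives the bound.
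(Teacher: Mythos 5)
Your argument is correct and essentially identical to the paper's: the paper likewise counts nodes of the computation tree level by level, bounds the YES-nodes (your \emph{success nodes}) at each level by $\abs{\bp(d)}$ using the fact that every such node has a balanced completion and distinct nodes at one level have distinct prefixes, charges the $n_k+1$ children (including all NO-siblings) to each YES-node, and sums to $(n+q)\abs{\bp(d)} \le 2n\,\abs{\bp(d)}$. The ordered-versus-unordered factor of $2$ you flag in the injection is a real point of bookkeeping, but the paper's own proof silently elides it in exactly the same way (its YES-leaves also count ordered prefixes $(L,R)$), so your treatment is, if anything, the more careful of the two.
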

\begin{proof}
Let us illustrate the recursive execution of the algorithm on $d$ by a
computation tree $T$ consisting of $q+1$ levels.
Each node in the tree is labeled by a triple $(L,R,A)$, where $A \in \{YES,NO\}$,
The first two entries in the label corresponds to the prefix partition $(L,R)$ in the invocation,
and the third corresponds to whether the partition can be completed to a balanced partition.
Note that each such node corresponds to a single invocation of Subset-Sum-DP (or alternatively 
checking whether $\sum L = \sum R$ in level $q+1$). It follows that the number of invocations 
of Subset-Sum-DP is bounded by the size of the computation tree, not including level $q+1$.

We refer to a node as a YES-node (NO-node) if its label end with a YES (NO).
Observe that all NO-nodes are leaves. On the other hand, there may be internal YES-nodes.
If a YES-node is a leaf, then it corresponds to a balanced partition $(L,R)$.
Clearly, the number of YES-nodes in level $k+1$ of the tree is no less than the number of YES-nodes in level $k$.  
Moreover, a NO-node must have a YES-node as a sibling, hence 
the number of NO-nodes in level $k \leq q$ is at most $n_k$ times the number of YES-nodes in level $k$.
Adding it all up we get:
\[
\sum_{k=1}^q \abs{\bp(d)} (n_k +1)
=    (n + q) \abs{\bp(d)}
\leq 2n \cdot \abs{\bp(d)}
~.
\qedhere
\]
\end{proof}

Hence, the lemma allows us to get an upper bound on the time complexity of the algorithm.

\begin{corollary}
The running time of Algorithm \text{Partition} is $O(n^2 \abs{\bp(d)} \min\set{\sum d, \abs{\bp(d)}})$.
\end{corollary}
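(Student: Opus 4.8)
The plan is to obtain the bound by combining the two preceding lemmas: the bound on the number of invocations of Subset-Sum-DP with the bound on the cost of a single invocation. As noted just before the corollary, the total work of Algorithm Partitions is dominated by its Subset-Sum-DP calls (the base-case tests at level $q+1$ only check whether $\sum L = \sum R$ and are no more expensive than a DP call). Hence it suffices to bound (number of invocations) $\times$ (cost of one invocation), and the running time splits into two branches according to which term of the inner minimum dominates.

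First I would invoke the preceding lemma, which gives that Algorithm Partitions invokes Subset-Sum-DP at most $2n \cdot \abs{\bp(d)}$ times. Next I would bound the cost of a single call. Every call runs on a suffix $a = (d_k^{n_k},\ldots,d_q^{n_q})$ of $d$ with a target determined by the current prefix partition $(L,R)$, so by the stated DP bound it costs $O(n \cdot \min\{\sum a, \abs{\bp(a)}\})$. Since $a$ is a suffix of $d$ we have $\sum a \le \sum d$ immediately, which already yields the per-call bound $O(n \cdot \sum d)$ and hence the first branch: multiplying by the invocation count gives $2n\abs{\bp(d)} \cdot O(n \sum d) = O(n^2 \abs{\bp(d)} \sum d)$.

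The second branch is where the work lies, and it is the step I expect to be the main obstacle. The naive hope, that $\abs{\bp(a)} \le \abs{\bp(d)}$ for a suffix $a$, is \emph{false}: a suffix such as $(1^{2k})$ of $d = (2k, 1^{2k})$ has exponentially many balanced partitions while $\abs{\bp(d)} = 1$. The correct quantity is not the number of balanced partitions of the bare suffix $a$, but the number of ways the current prefix partition $(L,R)$ extends to a balanced partition of the \emph{whole} sequence; each such extension is a distinct element of $\bp(d)$, so this number is at most $\abs{\bp(d)}$. The crux is therefore to show that, for the specific target dictated by $(L,R)$, the output-sensitive Subset-Sum-DP cost is governed by this number of \emph{global} completions rather than by the richer partition structure of $a$ in isolation, yielding a per-call bound of $O(n \cdot \min\{\sum d, \abs{\bp(d)}\})$. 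Granting this, multiplying by the $2n\abs{\bp(d)}$ invocation count gives the second branch $O(n^2 \abs{\bp(d)}^2)$, and taking the minimum of the two branches produces the claimed $O(n^2 \abs{\bp(d)} \min\{\sum d, \abs{\bp(d)}\})$.
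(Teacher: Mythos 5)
Your overall structure is exactly the paper's: the paper gives no more of a proof than ``invocation count $\times$ per-call DP cost,'' i.e., the preceding lemma's bound of $2n\cdot\abs{\bp(d)}$ invocations multiplied by the asserted Subset-Sum-DP bound, with $\sum d$ and $\abs{\bp(d)}$ silently substituted for the suffix quantities. So your first branch ($\sum a \le \sum d$, giving $O(n^2\abs{\bp(d)}\sum d)$) matches the paper verbatim, and your scrutiny of the second branch probes a step the paper simply does not discharge.

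Two remarks on that second branch. First, your counterexample is broken under the paper's counting: $\bp(d)$ is defined as a set of unordered pairs of subsequences, i.e., value-multisets (consistent with the algorithm enumerating only $n_k+1$ choices per distinct degree value), so the suffix $(1^{2k})$ has exactly \emph{one} balanced partition, not exponentially many. Your underlying point is nevertheless correct: $\abs{\bp(a)} \le \abs{\bp(d)}$ can fail for a suffix $a$ even with multiset counting --- take $d = (8,2,2,1,1,1,1)$, where $\abs{\bp(d)} = 1$ (the only split is $\set{(8),(2,2,1,1,1,1)}$), while the suffix $a = (2,2,1,1,1,1)$ has the two balanced partitions $\set{(2,2),(1,1,1,1)}$ and $\set{(2,1,1),(2,1,1)}$. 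Second, the step you explicitly leave ``granted'' --- that for the specific target $\sum L - \sum R$ the output-sensitive DP cost is governed by the number of completions of $(L,R)$ to a balanced partition of the whole sequence, each of which injects into $\bp(d)$ --- is precisely the gap in the paper as well: the paper's stated DP bound $O(n\cdot\min\set{\sum a, \abs{\bp(a)}})$, applied literally to the suffix, does not yield the $\abs{\bp(d)}$ term of the corollary. So your write-up is more honest than the source but, as it stands, is not a complete proof of the second branch; completing it would require exhibiting a pruned (forward/backward reachability) DP whose surviving table entries at each of the $\le n$ stages are each witnessed by a completion, giving the per-call bound $O(n\cdot\abs{\bp(d)})$ you need.
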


Clearly, the above running time becomes polynomial, if $\abs{\bp(d)}$ is polynomial.

Due to \Cref{thm:r-multi-bi-graph}, the minimum $r$ such that a given partition is 
$r$-\maxbi can be computed efficiently implying the following result. 

\begin{corollary}
Let $d$ be a degree sequence of length $n$ such that $\abs{\bp(d)} = \mathcal{O}(n^c)$,
 for some constant $c$. 
Then, $\maxmultbi(d)$ can be computed in polynomial time.
\end{corollary}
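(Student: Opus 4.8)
The plan is to combine the algorithm for enumerating all balanced partitions with the efficient test for $r$-\maxbi realizability of a single partition. First I would invoke Algorithm \text{Partition} to compute the full set $\bp(d)$. Since we assume $\abs{\bp(d)} = \mathcal{O}(n^c)$, the corollary on the running time of Algorithm \text{Partition} guarantees that this enumeration takes time $O(n^2 \abs{\bp(d)} \min\set{\sum d, \abs{\bp(d)}})$, which is polynomial in $n$ whenever $\abs{\bp(d)}$ is polynomial. The key observation making the whole approach work is that, by definition,
\[
\maxmultbi(d) = \min_{(a,b) \in \bp(d)} \maxmultbi(a,b)
~,
\]
i.e., an optimal bipartite multigraph realization of the single sequence $d$ must use \emph{some} balanced partition, and the best one over all partitions attains the optimum.

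Next I would show how to compute $\maxmultbi(a,b)$ for a \emph{fixed} partition $(a,b)$ efficiently. This is exactly the content of \Cref{thm:r-multi-bi-graph} (Berge): a partition $(a,b)$ is $r$-\maxbi if and only if the inequalities \eqref{eq:r-multi-bi-graph} hold for all $\ell = 1, \ldots, p$. As already noted after that theorem, since the left-hand sides are fixed and the right-hand sides are monotone in $r$, one can binary search over $r$ (with $r$ ranging up to $d_1$) and test the $\mathcal{O}(p)$ inequalities for each candidate, yielding a running time of $\mathcal{O}(n \cdot \log(d_1))$ per partition to determine the minimum feasible $r$, namely $\maxmultbi(a,b)$.

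The final step is simply to aggregate: for each of the $\mathcal{O}(n^c)$ partitions returned by Algorithm \text{Partition}, compute $\maxmultbi(a,b)$ in time $\mathcal{O}(n \log d_1)$, and return the minimum value over all partitions. The total cost is the enumeration time plus $\mathcal{O}(n^c) \cdot \mathcal{O}(n \log d_1)$ for the per-partition computations, both of which are polynomial in $n$ under the hypothesis. Hence $\maxmultbi(d)$ is computed in polynomial time.

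I do not expect a genuine obstacle here, as both ingredients are already established in the excerpt; the only point requiring a brief justification is the minimization identity above, which follows because any bipartite multigraph realizing $d$ induces a balanced partition of $d$ (its two sides have equal volume $m = \vol{d}/2$), and conversely each partition in $\bp(d)$ can be realized, so ranging over all partitions and taking the minimum of $\maxmultbi(a,b)$ recovers the global optimum $\maxmultbi(d)$.
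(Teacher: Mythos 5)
Your proposal is correct and follows essentially the same route as the paper, which likewise obtains this corollary by enumerating $\bp(d)$ with the partition-generation algorithm (polynomial time since $\min\set{\sum d, \abs{\bp(d)}} \leq \abs{\bp(d)} = \mathcal{O}(n^c)$) and computing $\maxmultbi(a,b)$ for each partition via binary search over the conditions of \Cref{thm:r-multi-bi-graph}, returning the minimum. Your explicit justification of the identity $\maxmultbi(d) = \min_{(a,b) \in \bp(d)} \maxmultbi(a,b)$ is a welcome detail the paper leaves implicit.
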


Similarly, \Cref{thm:totmult-bi-graph} implies the following.

\begin{corollary}
Let $d$ be a degree sequence of length $n$ such that $\abs{\bp(d)} = \mathcal{O}(n^c)$,
 for some constant $c$. 
Then, $\totmultbi(d) $ can be computed in polynomial time.
\end{corollary}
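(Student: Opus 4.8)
The plan is to reduce the computation of $\totmultbi(d)$ to computing $\totmultbi(a,b)$ over all balanced partitions $(a,b) \in \bp(d)$, each of which is cheap by \Cref{thm:totmult-bi-graph}, and then to enumerate $\bp(d)$ using Algorithm Partition, whose running time is polynomial under the hypothesis $\abs{\bp(d)} = \mathcal{O}(n^c)$.

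First I would establish the key identity
\[
\totmultbi(d) = \min_{(a,b) \in \bp(d)} \totmultbi(a,b)~.
\]
For the inequality ``$\geq$'', take any bipartite multigraph $H = (L,R,E)$ realizing $d$ with $\totmult(H) = \totmultbi(d)$. Since $H$ has no loops and is bipartite, every edge (counted with multiplicity) contributes one to the degree sum of each side, so $\vol{\deg(L)} = \abs{E} = \vol{\deg(R)}$; hence $(\deg(L), \deg(R))$ is a balanced partition of $d$, i.e.\ an element of $\bp(d)$. By definition of $\totmultbi(\deg(L),\deg(R))$ we get $\totmult(H) \geq \totmultbi(\deg(L),\deg(R)) \geq \min_{(a,b)} \totmultbi(a,b)$. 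For ``$\leq$'', each $(a,b) \in \bp(d)$ admits a realization attaining $\totmultbi(a,b)$, and that realization is a bipartite multigraph realizing $d$, so $\totmultbi(d) \leq \totmultbi(a,b)$ for every partition; taking the minimum gives the claim.

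Given the identity, the algorithm is immediate: run Algorithm Partition to produce the list $\bp(d)$; for each $(a,b)$ in the list compute $\totmultbi(a,b) = \max_{1 \leq \ell \leq p}\paren{\sum_{i=1}^\ell a_i - \sum_{i=1}^q \min\set{\ell, b_i}}$ in $\mathcal{O}(n)$ time using the closed form following \Cref{thm:totmult-bi-graph}; and return the minimum of these values. The enumeration costs $\mathcal{O}(n^2 \abs{\bp(d)} \min\set{\vol{d}, \abs{\bp(d)}})$ by the preceding corollary, the per-partition evaluation costs $\mathcal{O}(n \cdot \abs{\bp(d)})$ in total, and both are polynomial once $\abs{\bp(d)} = \mathcal{O}(n^c)$.

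I do not expect a genuine obstacle here, as the statement is a corollary assembling pieces already in place. The one point that warrants care is the completeness direction of the identity: one must confirm that the degree-sum balance forced by bipartiteness and the absence of loops guarantees that the partition induced by an optimal realization indeed lies in $\bp(d)$, so that restricting attention to enumerated balanced partitions loses no realization. A secondary technical note is that $\totmultbi(a,b)$ as given by \Cref{thm:totmult-bi-graph} is evaluated on an ordered pair, so one should either apply the formula to a fixed orientation of each unordered partition or observe that the optimal total multiplicity is independent of which side is labeled $A$, so either orientation yields the same value.
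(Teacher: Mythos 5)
Your proposal is correct and takes essentially the same route as the paper, which (implicitly, via ``Similarly, Theorem~\ref{thm:totmult-bi-graph} implies the following'') enumerates all balanced partitions with Algorithm Partitions and evaluates the closed-form expression $\totmultbi(a,b) = \max_{1\leq \ell\leq p}\paren{\sum_{i=1}^{\ell} a_i - \sum_{i=1}^{q}\min\{\ell,b_i\}}$ for each, returning the minimum. Your explicit justification of the identity $\totmultbi(d) = \min_{(a,b)\in\bp(d)} \totmultbi(a,b)$ via degree-sum balance simply spells out a step the paper leaves implicit.
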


We remark that a useful special subclass consists of
sequences with a \emph{constant} number of different degrees,
since such a sequence can have at most polynomially many different partitions.

\begin{corollary}
\label{coro:BDR_const_degrees}
Let $q$ be some constant and $d=(d_1^{n_1}, d_2^{n_2},\ldots,d_q^{n_q})$ 
be a degree sequence, where $n = \sum_{i=1}^q n_i$.
Then, $\bp(d)=\mathcal{O}(n^c)$, for some constant $c$.
\end{corollary}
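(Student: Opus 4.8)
The plan is to bound the number of balanced partitions of $d$ by counting the ways to distribute the copies of each distinct degree between the two sides. Since $d = (d_1^{n_1}, \ldots, d_q^{n_q})$, any partition $(L,R) \in \bp(d)$ is completely determined by specifying, for each block $i$, how many of the $n_i$ copies of $d_i$ are assigned to $L$ (the rest going to $R$). For block $i$ there are at most $n_i + 1$ choices, so the total number of partitions (balanced or not) is at most $\prod_{i=1}^q (n_i + 1)$.

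The main step is to show that $\prod_{i=1}^q (n_i+1) = \mathcal{O}(n^c)$ for some constant $c$. First I would observe that $\sum_{i=1}^q n_i = n$, so by the arithmetic-geometric mean inequality the product $\prod_{i=1}^q (n_i+1)$ is maximized, subject to this constraint, when the $n_i$ are as equal as possible; regardless, since each factor satisfies $n_i + 1 \leq n+1$ and there are exactly $q$ factors, we get the crude bound $\prod_{i=1}^q (n_i+1) \leq (n+1)^q$. Because $q$ is a constant, $(n+1)^q = \mathcal{O}(n^q)$, so taking $c = q$ suffices. This already proves the claim, since $\abs{\bp(d)}$ is at most the number of all such assignments, which is $\mathcal{O}(n^q)$.

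I would then note a small bookkeeping point: the set $\bp(d)$ consists of \emph{unordered} pairs $\set{a,b}$, so the count of ordered assignments overcounts by at most a factor of $2$, which does not affect the asymptotic bound. Hence $\abs{\bp(d)} \leq \frac12 \prod_{i=1}^q (n_i+1) \leq (n+1)^q = \mathcal{O}(n^c)$ with $c = q$.

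I do not anticipate a genuine obstacle here, since the bound follows directly from the counting argument and the constancy of $q$; the only thing to be careful about is stating the claim correctly, namely that the exponent $c$ depends on $q$ (which is itself constant), so the resulting bound is polynomial in $n$. One could sharpen the exponent using the fact that the balanced constraint $\vol{a} = \vol{b}$ eliminates many assignments, but this is unnecessary for the stated asymptotic result, so I would not pursue it.
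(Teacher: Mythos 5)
Your proof is correct and takes essentially the same route the paper intends: the paper states this corollary as a remark without an explicit proof, the implicit argument being exactly your counting --- each block of a partition is determined by how many of the $n_i$ copies of $d_i$ it receives, giving at most $\prod_{i=1}^q (n_i+1) \leq (n+1)^q = \mathcal{O}(n^q)$ balanced partitions since $q$ is constant. The factor-of-$2$ remark about unordered pairs is a harmless refinement that the asymptotic bound does not need.
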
 


\section{Small Maximum Degree Sequences}
\label{sec:single-sequence-small-degree}

Towards attacking the realizability problem of general bigraphic sequences,
we look at the question of bounding the total deviation of a nonincreasing
sequence $\seqd=(d_1,\ldots,d_n)$ as a function of its maximum degree, denoted
$\Delta = d_1$.

Burstein and Rubin~\cite{burstein2017sufficient} consider the realization
problem for directed graphs with loops, which is equivalent to BDR$^P$.
(Directed edges go from the first partition to the second.)
They give the following sufficient condition for a pair of sequences to be
the in- and out-degrees of a directed graph with loops.

\begin{theorem}[Burstein and Rubin~\cite{burstein2017sufficient}]
	\label{thm:bur-rub}
Consider a degree sequence $d$ with a partition $(a,b) \in \bp(\seqd)$ 
assuming that $a$ and $b$ have the same length $p$. 
Let $\sum a = \sum b = pc$ where $c$ is the average degree.
If $a_1 b_1 \leq pc +1$, then $d$ is realizable by a directed graph with loops.
\end{theorem}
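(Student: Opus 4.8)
The plan is to prove Theorem~\ref{thm:bur-rub} by translating the condition $a_1 b_1 \leq pc + 1$ into the Gale--Ryser inequalities of Theorem~\ref{thm:Gale-Ryser}, since realizability by a directed graph with loops is exactly the statement that $(a,b)$ is bigraphic. First I would recall that by Theorem~\ref{thm:Gale-Ryser}, it suffices to verify, for each $\ell = 1,\ldots,p$, the inequality $\sum_{i=1}^{\ell} a_i \leq \sum_{i=1}^{p} \min\{\ell, b_i\}$. The left-hand side is easy to bound from above using monotonicity: since $a$ is non-increasing with $a_1$ its maximum, we have $\sum_{i=1}^{\ell} a_i \leq \ell a_1$. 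The main work is to produce a good lower bound on the right-hand side.

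The key step is to lower-bound $\sum_{i=1}^{p} \min\{\ell, b_i\}$. I would split the sum according to whether $b_i \geq \ell$ or $b_i < \ell$; each term contributes at least $\min\{\ell, b_i\}$, and a clean way to exploit the volume constraint $\sum_{i=1}^p b_i = pc$ is to observe that $\min\{\ell, b_i\} \geq \ell \cdot b_i / b_1$ whenever $b_i \leq b_1$ (which always holds), because $\min\{\ell, b_i\} = b_i$ when $b_i \leq \ell$ and equals $\ell \geq \ell b_i / b_1$ when $b_i > \ell$. Summing this bound gives
\[
\sum_{i=1}^{p} \min\{\ell, b_i\} \;\geq\; \frac{\ell}{b_1} \sum_{i=1}^{p} b_i \;=\; \frac{\ell \, pc}{b_1}.
\]
Thus it would suffice to show $\ell a_1 \leq \ell pc / b_1$, i.e. $a_1 b_1 \leq pc$. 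This is slightly stronger than the hypothesis $a_1 b_1 \leq pc+1$, so the naive bound loses the crucial ``$+1$'' slack; recovering it is where the real argument lives.

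The hard part will be handling the boundary case where $a_1 b_1$ equals $pc+1$ rather than $pc$, since the crude proportional bound above is too lossy by exactly one unit. To recover the extra slack I would sharpen the estimate by tracking integrality: the quantities $\sum_{i=1}^\ell a_i$ and $\sum_{i=1}^p \min\{\ell,b_i\}$ are integers, so it suffices to show the real inequality up to strict tolerance less than one, and then the floor tightens it. Concretely, I would argue $\sum_{i=1}^{\ell} a_i \leq \ell a_1$ combined with $a_1 b_1 \le pc+1$ gives $\sum_{i=1}^\ell a_i \le \ell(pc+1)/b_1$, and compare against the refined lower bound on the right-hand side, where one extracts an additional $+1$ either from the term $i=1$ (since $\min\{\ell, b_1\} = \ell$ can be counted exactly rather than via the proportional estimate) or from the fact that $\ell \leq p$ and $a_1 \le \sum a = pc$ forces the estimate to be non-tight except at extreme $\ell$. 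I expect the cleanest route is to case-split on whether $\ell \geq b_1$ (where $\sum_i \min\{\ell,b_i\} = \sum_i b_i = pc \geq \ell a_1$ follows from $a_1 \le \ell \cdot c$-type bounds) versus $\ell < b_1$ (where the proportional bound plus one exact term closes the gap), and I anticipate the $\ell < b_1$ regime to be the genuine obstacle requiring the full strength of the $a_1 b_1 \leq pc+1$ hypothesis.
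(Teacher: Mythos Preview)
The paper cites Theorem~\ref{thm:bur-rub} without proof, but its proof of the generalization Lemma~\ref{lem:Delta<sqrtVol-r-bigraphic} (set $r=1$) is exactly your plan: verify Gale--Ryser by case-splitting on $\ell \geq b_1$ (where the right side equals $\sum b$ and dominates $\sum_{i\le\ell} a_i$ trivially) versus $\ell < b_1$, and in the latter case combine the proportional lower bound $\sum_i \min\{\ell,b_i\} \geq \ell X/b_1$ with integrality---packaged in the paper as the ceiling in Observation~\ref{obs:avg} applied to the conjugate sequence---to absorb the ``$+1$''. Your ``extract one exact term at $i=1$'' alternative does not help, since $\min\{\ell,b_1\}=\ell=\ell\, b_1/b_1$ is already tight there, but the integrality route you also name is the correct mechanism and matches the paper.
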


In what follows we make use of the following straightforward technical claim
which slightly strengthens a similar claim from~\cite{bar2022realizing}.

\begin{observation}
\label{obs:avg}
Consider a nonincreasing integer sequence $d=(d_1,\ldots,d_k)$ 
of total sum $\sum d= D$.
Then, $\sum_{i=1}^\ell d_i \geq \ceil{\ell D/k}$, for every $1 \leq \ell \leq k$.
\end{observation}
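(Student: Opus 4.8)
The plan is to first establish the \emph{rational} lower bound $\sum_{i=1}^\ell d_i \geq \ell D / k$, and then upgrade it to the ceiling by invoking the integrality of the left-hand side. Write $P_\ell = \sum_{i=1}^\ell d_i$, so that $D = P_k$ and the target inequality is equivalent to $k P_\ell \geq \ell D$.

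The core of the argument is a double-counting identity that exploits the non-increasing order of $d$. I would expand the difference
\[
k P_\ell - \ell D
= (k-\ell) \sum_{i=1}^{\ell} d_i - \ell \sum_{j=\ell+1}^{k} d_j
= \sum_{i=1}^{\ell} \sum_{j=\ell+1}^{k} (d_i - d_j)
~,
\]
where the first equality uses $D = \sum_{i=1}^\ell d_i + \sum_{j=\ell+1}^k d_j$, and the second simply rewrites the coefficients $(k-\ell)$ and $\ell$ as the sizes of the index ranges $\{\ell+1,\dots,k\}$ and $\{1,\dots,\ell\}$, respectively. Because $d$ is non-increasing, every summand satisfies $d_i - d_j \geq 0$ for $i \leq \ell < j$, so the entire sum is nonnegative. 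Hence $k P_\ell \geq \ell D$, i.e., $P_\ell \geq \ell D / k$.

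Finally, since $P_\ell = \sum_{i=1}^\ell d_i$ is an integer and it is bounded below by the real number $\ell D / k$, it must also be bounded below by the least integer exceeding that value, giving $\sum_{i=1}^\ell d_i \geq \ceil{\ell D / k}$, as claimed.

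I do not expect a genuine obstacle here; the only point requiring care is the bookkeeping in the double-counting identity, and the single place where monotonicity enters is the sign of each term $d_i - d_j$. An alternative, equally short route would bound the tail directly via $\sum_{j=\ell+1}^k d_j \leq (k-\ell)\, d_\ell \leq (k-\ell)\, P_\ell/\ell$ (using $d_\ell \leq P_\ell/\ell$ since $d_\ell$ is the smallest of the first $\ell$ terms), which rearranges to the same inequality; but the symmetric double-counting form is cleaner and makes the use of the ordering most transparent.
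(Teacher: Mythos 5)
Your proof is correct and takes essentially the same route as the paper: the paper derives $D \leq \frac{k}{\ell}\sum_{i=1}^\ell d_i$ from the fact that the prefix average dominates the suffix average, which is exactly the content of your double-counting identity $kP_\ell - \ell D = \sum_{i\leq \ell}\sum_{j>\ell}(d_i - d_j) \geq 0$, merely written out termwise. Your explicit integrality step for the ceiling (left implicit in the paper) and the handling of $\ell = k$ (where the paper's average comparison would divide by zero, but your empty double sum is fine) are small cosmetic improvements, not a different argument.
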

\begin{proof}
Since $d$ is nonincreasing, 
$
\frac{1}{\ell} \sum_{i=1}^\ell d_i 
\geq \frac{1}{k - \ell} \sum_{i=\ell+1}^k d_i
~.
$
Consequently, 
\[
D 
=     \sum_{i=1}^k d_i 
=     \sum_{i=1}^\ell d_i + \sum_{i=\ell+1}^k d_i
\leq \sum_{i=1}^\ell d_i + \frac{k-\ell}{\ell} \sum_{i=1}^\ell d_i
=     \frac{k}{\ell} \sum_{i=1}^\ell d_i
~,
\]
implying the claim. 
\end{proof}


\subsection{Bounding the Maximum Multiplicity}

Theorem~\ref{thm:bur-rub} is extended to bipartite multigraphs with
bounded maximum multiplicity, i.e., to $r$-\maxbi sequences. 
The following is a slightly stronger version of Lemma 14 from~\cite{bar2022realizing}.

\begin{lemma}
\label{lem:Delta<sqrtVol-r-bigraphic}
Let $r$ be a positive integer.
Consider a degree sequence $d$ of length $n$ 
with a partition $(a,b) \in \bp(d)$.
If $a_1 \cdot b_1 \leq r \cdot \sum d/2 + r$, then $(a,b)$ is $r$-\maxbi.
\end{lemma}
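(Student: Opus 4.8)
The plan is to apply the Berge characterization (Theorem~\ref{thm:r-multi-bi-graph}) directly: to show $(a,b)$ is $r$-\maxbi, it suffices to verify that $\sum_{i=1}^{\ell} a_i \leq \sum_{i=1}^{q} \min\{\ell r, b_i\}$ holds for every $\ell = 1, \ldots, p$. First I would fix an index $\ell$ and split the analysis into two regimes depending on how $\ell r$ compares to the entries $b_i$. The guiding idea is that the hypothesis $a_1 b_1 \leq r \cdot \sum d/2 + r$ is a product bound that controls the extremes, and I want to convert it into the additive inequality that Berge requires.

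The natural case split is on whether $\ell r \geq b_1$ (the largest entry on the $b$-side). In the \emph{easy case} $\ell r \geq b_1$, every term satisfies $\min\{\ell r, b_i\} = b_i$, so the right-hand side equals $\sum_{i=1}^q b_i = \sum d / 2$. Then I must show $\sum_{i=1}^{\ell} a_i \leq \sum d/2$. I would bound the left side using $\sum_{i=1}^{\ell} a_i \leq \ell a_1 \leq \ell r \cdot a_1$ and combine with the hypothesis rewritten as $a_1 \leq (r \sum d/2 + r)/b_1$; since $\ell r \geq b_1$, this should pin the left side below $\sum d/2$ after a short manipulation. In the \emph{hard case} $\ell r < b_1$, many of the $\min$ terms equal $\ell r$, and the right-hand side is at least $\ell r$ times the number of large $b_i$'s. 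Here I would lower-bound the right-hand side and use $\sum_{i=1}^{\ell} a_i \leq \ell a_1$ on the left, then appeal to the product hypothesis together with Observation~\ref{obs:avg} to guarantee enough mass on the $b$-side.

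The main obstacle will be the hard case $\ell r < b_1$, where I cannot collapse the right-hand side to $\sum d / 2$ and must instead argue carefully about how many entries $b_i$ exceed $\ell r$. The key technical move is to use Observation~\ref{obs:avg} applied to the $b$-sequence: it gives $\sum_{i=1}^{j} b_i \geq \lceil j \sum b / q \rceil$, which lets me lower-bound partial sums of $b$ and hence the count of entries large enough to contribute a full $\ell r$. I would let $k$ be the number of indices with $b_i > \ell r$, so that $\sum_{i=1}^q \min\{\ell r, b_i\} \geq k \ell r + \sum_{i > k} b_i$, and then show this quantity dominates $\ell a_1 \geq \sum_{i=1}^\ell a_i$ using the $+r$ slack in the hypothesis. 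The extra additive $r$ (versus the weaker bound without it) is precisely what should close the boundary cases where $\ell r$ is just below $b_1$, which is why the stated lemma strengthens the earlier version from~\cite{bar2022realizing}.

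Throughout, I would keep the volume identity $\sum a = \sum b = \sum d / 2$ in hand, use $a_1 = \Delta$ and $b_1$ as the controlling extremes, and verify that the two cases overlap correctly at the threshold $\ell r = b_1$ so that no index $\ell$ is left unchecked. If a clean unified bound emerges, I would present a single chain of inequalities $\sum_{i=1}^\ell a_i \leq \ell a_1 \leq \cdots \leq \sum_{i=1}^q \min\{\ell r, b_i\}$; otherwise the two-case structure above should suffice, with the product-to-sum conversion via Observation~\ref{obs:avg} being the crux of the argument.
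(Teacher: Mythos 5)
Your skeleton matches the paper's proof — verify Berge's condition (Theorem~\ref{thm:r-multi-bi-graph}) for each $\ell$, splitting on whether $\ell r \geq b_1$ — but the execution of both cases has problems, and the hard case has a genuine gap. In the easy case $\ell r \geq b_1$ your proposed manipulation goes the wrong way: from $a_1 \leq (r\sum d/2 + r)/b_1$ and $b_1 \leq \ell r$ you get $\ell a_1 \leq \ell(r\sum d/2+r)/b_1 \geq \sum d/2 + 1$, an upper bound that sits \emph{above} the target, so it cannot pin $\sum_{i=1}^{\ell} a_i$ below $\sum d/2$ (indeed $\ell a_1$ itself can exceed $\sum d/2$). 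This is harmless only because the case is trivial: $\sum_{i=1}^{\ell} a_i \leq \sum_{i=1}^{p} a_i = \sum d/2$, which is all the paper uses. In the hard case $\ell r < b_1$, your plan to apply Observation~\ref{obs:avg} to $b$ itself fails for a directional reason: it lower-bounds prefix sums $\sum_{i \leq k} b_i$, hence \emph{upper}-bounds the tail $\sum_{i > k} b_i$ appearing in your decomposition $k\ell r + \sum_{i>k} b_i$, which is the wrong direction. The paper instead applies Observation~\ref{obs:avg} to the \emph{conjugate} sequence $\conj{b}_j = \abs{\set{i \mid b_i \geq j}}$, which is nonincreasing of length $b_1$ with total sum $X = \sum d/2$, and whose length-$\ell r$ prefix sum is exactly $\sum_{i=1}^{q}\min\{\ell r, b_i\}$; this yields $\sum_{i=1}^{q}\min\{\ell r, b_i\} \geq \ceil{\ell r X / b_1}$.

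The second missing idea is the mechanism by which the $+r$ slack closes the bound: it is an integrality (ceiling) argument, not a real-valued estimate. From $rX \geq a_1 b_1 - r$ one gets $\ell r X/b_1 \geq \ell a_1 - \ell r/b_1$, and since you are in the case $\ell r < b_1$ the deficit $\ell r/b_1$ is strictly less than $1$, so $\ceil{\ell r X/b_1} = \ell a_1$ (equivalently: the left side is an integer exceeding $\ell a_1 - 1$). Without exploiting integrality, the slack of $r$, once divided by $b_1$, is simply too small to dominate $\ell a_1$, so your phrase that the $+r$ ``should close the boundary cases'' names the right culprit but supplies no working mechanism. If you prefer to avoid conjugate sequences entirely, the pointwise inequality $\min\{\ell r, b_i\} \geq (\ell r/b_1)\, b_i$ (valid whenever $\ell r \leq b_1$) summed over $i$ gives the same lower bound $\ell r X/b_1$ directly, after which the same integrality step finishes; but some version of that product-to-sum bound plus the ceiling is indispensable, and your $k$-counting route as sketched does not supply it.
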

\begin{proof}
Let $r$, $d$ and $(a,b)$ as in the lemma where $a = (a_1,a_2, \ldots, a_p)$ and $b = (b_1, b_2, \ldots, b_q)$. 
Moreover, let $X = \sum a = \sum b = \sum d / 2$.
To prove the claim, we assume that $a_1 \cdot b_1 \leq r \cdot X + r$,
and show that Equation~\eqref{eq:r-multi-bi-graph}
holds for a fixed index $\ell \in [p]$. 
The lemma then follows due to Theorem~\ref{thm:r-multi-bi-graph}.

First, we consider the case where $b_1 \leq \ell r $. 
Then, 
$\sum_{i= 1}^q \min\{\ell r ,b_i\} = X \geq \sum_{i=1}^{\ell} a_i$, and Equation~\eqref{eq:r-multi-bi-graph} holds.

In the following, we assume that $\ell r < b_1$.
Define the \emph{conjugate} sequence of $b$ as $\conj{b}_j = \abs{\set{b_i \mid b_i \geq j }}$, for every $j$.
Note that the conjugate sequence $\conj{b}$ of $b$ is nonincreasing, and that 
$\sum_{j=1}^{\ell r} \conj{b}_j = \sum_{i= 1}^q \min\{\ell r ,b_i\}$.
%
By Observation~\ref{obs:avg}, 
\[ 
\sum_{i= 1}^q \min\{\ell r ,b_i\}
\geq \ceil{ \ell r X/b_1 }
\geq \ceil{ \ell (a_1 b_1 - r)/b_1 }
= \ceil{\ell a_1 - \ell r/b_1}
=     \ell a_1
~.
\]
As $a$ is nonincreasing, we have that
\(
\sum_{i=1}^{\ell} a_i \leq \ell a_1 \leq \sum_{i= 1}^q \min\{\ell r ,b_i\}
\).
The lemma follows. 
\end{proof}

\begin{lemma}
There exists a degree sequence $d$ with a partition $(a,b) \in \bp(d)$, 
such that $a_1 \cdot b_1 = r \cdot \sum d/2 + r$, which is $r$-\maxbi, 
but not $(r-1)$-\maxbi.
\end{lemma}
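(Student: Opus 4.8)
The plan is to exhibit a concrete family of sequences meeting the boundary condition $a_1 b_1 = r \cdot \sum d/2 + r$ with equality, and then show that this partition is realizable with maximum multiplicity exactly $r$ but not $r-1$. The natural candidate is a ``top-heavy'' sequence where a single high-degree vertex on each side forces the maximum multiplicity. Concretely, I would take $a = (a_1, \ldots)$ and $b = (b_1, \ldots)$ so that $a_1$ and $b_1$ are the large degrees and the remaining entries are small fillers. A clean choice is to let the two big vertices be adjacent and absorb most of each other's degree through parallel edges, so that the edge between vertex $1 \in A$ and vertex $1 \in B$ must carry roughly $r$ copies.

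\textbf{Construction.} Fix $r$ and choose $a_1 = b_1 = k$ for a parameter $k$, with the rest of the two sides padded by degree-$1$ vertices. Then $\sum a = \sum b = X$ with $X = k + (\text{number of ones})$, and I would pick the number of padding vertices so that $X = (k^2 - r)/r = k^2/r - 1$, which makes $a_1 b_1 = k^2 = r X + r = r(\sum d/2) + r$ hold exactly. (Here one picks $k$ divisible by $r$, e.g.\ $k = r$ or $k = r m$, to keep $X$ an integer and the padding count nonnegative; the simplest instance is $a_1 = b_1 = r$, giving $X = r - 1$ ones-plus-big-vertex, which I would sanity-check first.) By Lemma~\ref{lem:Delta<sqrtVol-r-bigraphic} this partition is $r$-\maxbi, which disposes of the positive half immediately.

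\textbf{Lower bound.} The crux is showing the partition is \emph{not} $(r-1)$-\maxbi, i.e.\ that Equation~\eqref{eq:r-multi-bi-graph} of Theorem~\ref{thm:r-multi-bi-graph} fails for $r-1$ at some index. I would test the inequality at $\ell = 1$, where it reads $a_1 \leq \sum_{i=1}^q \min\{(r-1), b_i\}$. Since vertex $1 \in A$ has degree $a_1 = k$ and each of its $b_i$-neighbors can receive at most $r-1$ parallel edges, the total degree absorbable at $\ell = 1$ is $\sum_{i} \min\{r-1, b_i\}$; with the padding vertices having $b_i = 1$ and only $b_1 = k$ large, this sum is $(r-1) + (q-1)\cdot 1$ small compared to $k$ when the ones are few. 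I must arrange the parameters so that $a_1 = k$ strictly exceeds $\sum_{i=1}^q \min\{r-1, b_i\}$, violating the $(r-1)$-version of the Gale--Ryser--Berge inequality at $\ell = 1$.

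\textbf{Main obstacle.} The delicate point is the bookkeeping that makes the equality $a_1 b_1 = rX + r$ hold on the nose while \emph{simultaneously} forcing the $(r-1)$ inequality to fail; the two constraints pull in opposite directions, since many padding ones increase $X$ (helping feasibility) but also inflate $\sum_i \min\{r-1,b_i\}$ (hurting the lower bound). I expect the resolution is to make the big vertices $b_1$ genuinely large (not $1$) so that $\min\{r-1, b_1\} = r-1 < b_1$ truncates meaningfully, and to verify the failing index is the one where the truncation bites hardest. I would compute $\sum_i \min\{r-1, b_i\}$ at the chosen $\ell$ explicitly, confirm it falls just below $\sum_{i=1}^{\ell} a_i$ by the slack of exactly $r/b_1 < 1$ that Lemma~\ref{lem:Delta<sqrtVol-r-bigraphic} shows is consumed at the boundary, and thereby pin down that dropping from $r$ to $r-1$ breaks realizability precisely because the boundary case leaves no room to spare.
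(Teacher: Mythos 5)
There is a genuine gap in the \textbf{lower-bound half}: your padding-by-ones family does not satisfy ``not $(r-1)$-\maxbi'' for any feasible choice of parameters. With $a=b=(k,1^{X-k})$ and $X=k^2/r-1$, feasibility forces $X\geq k$, i.e.\ $k\geq r(m{-}1)^2\cdot(\text{something positive})$ in your parametrization $k=rm$ with $m\geq 2$; but then at $\ell=1$ the $(r{-}1)$-version of Equation~\eqref{eq:r-multi-bi-graph} reads
\begin{equation*}
a_1 \;=\; rm \;\leq\; \min\{r-1,k\} + (X-k)\cdot 1 \;=\; (r-1) + rm^2 - rm - 1,
\end{equation*}
and this \emph{holds} whenever $r(m-1)^2\geq 2$, i.e.\ for every feasible $m\geq 2$, $r\geq 2$; the same computation at general $\ell$ (where the left side is $k+\ell-1$ and the right side is $\min\{(r-1)\ell,k\}+X-k$) shows all the Berge inequalities hold, so your partition is in fact $(r-1)$-\maxbi. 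A concrete counterexample: $r=2$, $a=b=(4,1,1,1)$ gives $a_1b_1=16=2\cdot 7+2=r\sum d/2+r$ on the nose, yet $(a,b)$ is bigraphic (take the two degree-$4$ vertices adjacent and each degree-$1$ vertex adjacent to the opposite big vertex), i.e.\ $1$-\maxbi. The root cause is exactly the tension you flagged: the volume constraint $X=k^2/r-1$ forces many padding ones, and each one contributes a full unit to $\sum_i\min\{r-1,b_i\}$, so the truncation at the single large entry ($k\mapsto r-1$) is never enough to break the inequality. Your proposed fix --- ``make $b_1$ genuinely large'' --- is insufficient as long as only one entry per side is large; what is needed is that \emph{every} entry of $b$ is large, equivalently that side $b$ is \emph{short} relative to $a_1$.

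This is precisely what the paper's construction does: it takes the near-regular partition $a=b=\left(q^{k-1},q-1\right)$ with $q=rk$, so each side has only $k$ vertices, $\sum a = qk-1 = \sum d/2$, and $a_1b_1=q^2=r(qk-1)+r$ holds with equality. The positive half then follows from Lemma~\ref{lem:Delta<sqrtVol-r-bigraphic} exactly as in your proposal, while at $\ell=1$ with multiplicity $r-1$ every one of the $k$ entries of $b$ exceeds $r-1$, so
\begin{equation*}
\sum_{i=1}^{k}\min\{r-1,b_i\} \;=\; k(r-1) \;=\; q-k \;<\; q \;=\; a_1,
\end{equation*}
and the $(r-1)$-condition fails. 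So your overall strategy (boundary equality plus Lemma~\ref{lem:Delta<sqrtVol-r-bigraphic} for the upper bound, an $\ell=1$ violation of Theorem~\ref{thm:r-multi-bi-graph} for the lower bound) matches the paper, but the witness family must be replaced: all degrees large and few vertices per side, rather than one large degree with unit padding.
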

\begin{proof}
Consider the sequence $d = (q^{2k-1},(q-1)^2)$ for positive integers $q,k$
such that $q = r \cdot k$.
This sequence has a unique partition $(a,b) \in \bp(d)$, where $a = b = (q^{k-1},(q-1))$.
One can verify that $a_1 b_1 = q^2$, while
\[ \textstyle
r \cdot \sum d/2 + r 
= r (qk-1) + r
= rqk
= q^2
~.
\]
The partition $(a,b)$ is $r$-\maxbi, but no better. 
\end{proof}

Lemma~\ref{lem:Delta<sqrtVol-r-bigraphic} is stated for a given partition (BDR$^P$).
For BDR, we immediately have the following which is a slight improvement over Corollary 16 form~\cite{bar2022realizing}.

\begin{corollary}
Let $r$ be a positive integer and $d$ be a partitionable degree sequence.
If $d_1^2 \leq r \cdot \sum d / 2 + r$, then any partition $(a,b) \in \bp(d)$ is $r$-\maxbi.
\end{corollary}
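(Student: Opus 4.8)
The plan is to derive this as an immediate consequence of Lemma~\ref{lem:Delta<sqrtVol-r-bigraphic}, whose hypothesis is phrased in terms of the per-side maxima $a_1$ and $b_1$ rather than the global maximum $d_1 = \Delta$. The entire content of the corollary is the passage from the per-partition quantity $a_1 \cdot b_1$ to the partition-independent bound $d_1^2$.

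First I would fix an arbitrary partition $(a,b) \in \bp(d)$ and observe that both $a$ and $b$ are subsequences of the non-increasing sequence $d$. Since $d$ consists of positive integers and $\vol{a} = \vol{b} = \sum d/2 > 0$, neither block is empty, so $a_1$ and $b_1$ are both well defined. Being the largest entry of a subsequence of $d$, each satisfies $a_1 \leq d_1$ and $b_1 \leq d_1$, and therefore
\[
a_1 \cdot b_1 \leq d_1 \cdot d_1 = d_1^2 \leq r \cdot \sum d/2 + r,
\]
where the last inequality is exactly the hypothesis of the corollary.

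At this point the hypothesis of Lemma~\ref{lem:Delta<sqrtVol-r-bigraphic} is verified for the partition $(a,b)$, so the lemma yields that $(a,b)$ is $r$-\maxbi. Since the partition was arbitrary, the same conclusion holds for every $(a,b) \in \bp(d)$, which is the claim.

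There is no real obstacle here: the step that does all the work (checking Equation~\eqref{eq:r-multi-bi-graph} via the conjugate-sequence and Observation~\ref{obs:avg} argument) has already been carried out in Lemma~\ref{lem:Delta<sqrtVol-r-bigraphic}, and the corollary only substitutes the crude but partition-uniform bound $a_1 b_1 \leq d_1^2$. The one point worth stating explicitly is the non-emptiness of both blocks, which guarantees $a_1, b_1 \leq d_1$ and hence that the monotone weakening of the hypothesis is valid for \emph{all} partitions simultaneously.
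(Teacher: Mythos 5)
Your proof is correct and matches the paper's intended derivation: the paper states this corollary without explicit proof as an immediate consequence of Lemma~\ref{lem:Delta<sqrtVol-r-bigraphic}, and your argument---bounding $a_1 b_1 \leq d_1^2$ uniformly over all partitions and invoking the lemma---is exactly that derivation, with the non-emptiness of both blocks spelled out as a useful extra detail.
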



\subsection{Bounding the Total Multiplicity}

In this section, we establish results for total multiplicity analogous to those obtained 
in the previous section for the maximum multiplicity. 

\begin{lemma}
	\label{lem:Delta<sqrtVol-s-tot-bigraphic}
	Let $t$ be a positive integer.
	Consider a degree sequence $d$ of length $n$ 
	with a partition $(a,b) \in \bp(d)$.
	If $a_1 \cdot b_1 \leq  \sum d/2 + t + 1$, then $(a,b)$ is $t$-\totbi.
\end{lemma}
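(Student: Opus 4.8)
The plan is to mirror the proof of Lemma~\ref{lem:Delta<sqrtVol-r-bigraphic} (the maximum-multiplicity case), but invoke Theorem~\ref{thm:totmult-bi-graph} instead of Theorem~\ref{thm:r-multi-bi-graph} as the realizability criterion. By Theorem~\ref{thm:totmult-bi-graph}, the partition $(a,b)$ is $t$-\totbi if and only if $\sum_{i=1}^{\ell} a_i \leq \sum_{i=1}^q \min\{\ell,b_i\} + t$ for every $\ell = 1,\ldots,p$. So I fix an arbitrary index $\ell \in [p]$ and show this inequality holds under the hypothesis $a_1 b_1 \leq \sum d/2 + t + 1$. As before, write $X = \sum a = \sum b = \sum d/2$.

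First I would dispose of the easy case $b_1 \leq \ell$: then $\sum_{i=1}^q \min\{\ell,b_i\} = \sum_{i=1}^q b_i = X \geq \sum_{i=1}^{\ell} a_i$, so the target inequality (even without the $+t$ slack) holds. In the remaining case $\ell < b_1$, I would use the conjugate-sequence / averaging bound exactly as in the previous lemma: since the conjugate sequence $\conj{b}$ is nonincreasing and $\sum_{j=1}^{\ell} \conj{b}_j = \sum_{i=1}^q \min\{\ell,b_i\}$, Observation~\ref{obs:avg} applied to $\conj{b}$ (whose total sum is $X$, of length $b_1$) gives
\[
\sum_{i=1}^q \min\{\ell,b_i\} \geq \ceil{\ell X/b_1}
\geq \ceil{\ell (a_1 b_1 - t - 1)/b_1}
= \ceil{\ell a_1 - \ell(t+1)/b_1}
~.
\]
Here I substituted the hypothesis in the form $X \geq a_1 b_1 - t - 1$.

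The main obstacle — and the point where this proof genuinely differs from the $r=1$ specialization of the maximum-multiplicity lemma — is the final arithmetic step. In Lemma~\ref{lem:Delta<sqrtVol-r-bigraphic} the slack term $\ell r/b_1$ was at most $1$ (since $\ell r < b_1$), so the ceiling absorbed it cleanly to yield exactly $\ell a_1$. Here the correction is $\ell(t+1)/b_1$, which can exceed $1$, so I cannot simply collapse the ceiling to $\ell a_1$. Instead I would argue that $\ceil{\ell a_1 - \ell(t+1)/b_1} \geq \ell a_1 - t$, which suffices because the target allows slack $+t$: this needs $\ell(t+1)/b_1 \leq t + 1$ (so that at most $t+1$ is subtracted before rounding, and after taking the ceiling at least $\ell a_1 - t$ remains). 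Since we are in the case $\ell < b_1$, indeed $\ell/b_1 < 1$ gives $\ell(t+1)/b_1 < t+1$, hence $\ceil{\ell a_1 - \ell(t+1)/b_1} \geq \ell a_1 - t$. Combining with $\sum_{i=1}^{\ell} a_i \leq \ell a_1$ (as $a$ is nonincreasing) yields
\[
\sum_{i=1}^{\ell} a_i \leq \ell a_1 \leq \sum_{i=1}^q \min\{\ell,b_i\} + t
~,
\]
which is precisely Equation~\eqref{eq:totmult-bi-graph}. As $\ell$ was arbitrary, Theorem~\ref{thm:totmult-bi-graph} gives that $(a,b)$ is $t$-\totbi, completing the proof. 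I would double-check the rounding inequality carefully, since that is the one place where an off-by-one in the direction of the ceiling could break the argument.
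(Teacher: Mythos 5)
Your proof is correct and essentially identical to the paper's: the same case split on $\ell \geq b_1$ versus $\ell < b_1$, the same application of Observation~\ref{obs:avg} to the conjugate sequence $\conj{b}$ (length $b_1$, sum $X$), and the same chain $\sum_{i=1}^q \min\{\ell,b_i\} + t \geq \ceil{\ell a_1 - \ell(t+1)/b_1} + t \geq \ell a_1$ via Theorem~\ref{thm:totmult-bi-graph} --- you merely make explicit the rounding step the paper leaves implicit. One small wording caveat: the justification requires the \emph{strict} inequality $\ell(t+1)/b_1 < t+1$ (with equality the ceiling would give only $\ell a_1 - t - 1$), but since you derive it from $\ell < b_1$ the argument stands as written.
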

\begin{proof}
Let $t$, $d$ and $(a,b)$ as in the lemma where $a = (a_1,a_2, \ldots, a_p)$
and $b = (b_1, b_2, \ldots, b_q)$,
and let $X = \sum a = \sum b = \sum d / 2$.
To prove the claim, we assume that $a_1 \cdot b_1 \leq X + t + 1$, 
and show that Equation~\eqref{eq:totmult-bi-graph}
holds for every index $\ell \in [p]$. 
The lemma then follows due to Theorem~\ref{thm:totmult-bi-graph}.
	
First, consider the case where $\ell \geq b_1$. In this case,
\[ 
\sum_{i= 1}^q \min\{\ell ,b_i\} = \sum b = X \geq \sum_{i=1}^{\ell} a_i
~,
\]
and Equation~\eqref{eq:totmult-bi-graph} holds.
	
Next, assume that $\ell < b_1$.
Note that the conjugate sequence $\conj{b}$ of $b$ is nonincreasing, and that 
$\sum_{j=1}^{\ell} \conj{b}_j = \sum_{i= 1}^q \min\{\ell,b_i\}$.
By Observation~\ref{obs:avg}, 
\[ 
\sum_{i= 1}^q \min\{\ell ,b_i\} + t
\geq     \ceil{ \frac{\ell X}{b_1} } + t 
\geq \ceil{ \frac{\ell (a_1 b_1 - t - 1)}{b_1}} + t 
=     \ceil{\ell a_1 - \frac{\ell(t+1)}{b_1}} + t 
\geq \ell a_1
~.
\]
As $a$ is nonincreasing, we have that
\(
\sum_{i=1}^{\ell} a_i \leq \ell a_1 \leq \sum_{i= 1}^q \min\{\ell ,b_i\} + t
\).
The lemma follows.
\end{proof}

The following lemma shows that the above bound it tight.

\begin{lemma}
\label{lem:Delta<sqrtVol-r-tot-bigraphic}
There exists a degree sequence $d$ with a partition $(a,b) \in \bp(d)$, 
such that $a_1 \cdot b_1 = \sum d/2 + t + 2$,
and $(a,b)$ is not $t$-\totbi.
\end{lemma}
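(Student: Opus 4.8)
The plan is to exhibit an explicit family that mirrors the tight example for the maximum-multiplicity bound (\Cref{lem:Delta<sqrtVol-r-tot-bigraphic}'s companion lemma), but calibrated so that the relevant Gale-Ryser-type slack from \Cref{thm:totmult-bi-graph} is violated by exactly the right amount. Recall that by \Cref{thm:totmult-bi-graph}, a partition $(a,b)$ is $t$-\totbi\ if and only if $\sum_{i=1}^{\ell} a_i \leq \sum_{i=1}^q \min\{\ell,b_i\} + t$ for all $\ell = 1,\ldots,p$; equivalently,
\[
\totmultbi(a,b) = \max_{1 \leq \ell \leq p} \paren{ \sum_{i=1}^{\ell} a_i - \sum_{i=1}^q \min\{\ell,b_i\} }
~.
\]
So my goal reduces to constructing $d$ and a partition $(a,b)$ with $a_1 b_1 = \sum d/2 + t + 2$ for which this maximum exceeds $t$, i.e. for which some index $\ell$ satisfies $\sum_{i=1}^{\ell} a_i - \sum_{i=1}^q \min\{\ell,b_i\} \geq t+1$.

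First I would try the analogue of the previous construction, namely a (near-)uniform sequence $d = (q^{2k-1},(q-1)^2)$ or a close variant, with $a = b = (q^{k-1},(q-1))$, but now choosing the parameters $q,k$ so that the arithmetic pins $a_1 b_1$ one unit above the threshold of \Cref{lem:Delta<sqrtVol-s-tot-bigraphic}. Since the lemma guarantees $t$-\totbi\ when $a_1 b_1 \leq \sum d/2 + t + 1$, the natural boundary to probe is $a_1 b_1 = \sum d/2 + t + 2$. The key computation is to evaluate the critical Gale-Ryser slack at the index $\ell$ that maximizes $\sum_{i=1}^{\ell} a_i - \sum_{i=1}^q \min\{\ell,b_i\}$. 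For the uniform-type sequence the binding index is typically $\ell = 1$ (where $b_1$ is large relative to $\ell$) or an intermediate value; I would compute $\sum_{i=1}^{\ell} a_i$ directly and compare it against $\sum_{j=1}^{\ell} \conj{b}_j$ using the conjugate sequence, as was done in the proof of \Cref{lem:Delta<sqrtVol-s-tot-bigraphic}. The aim is to show that at the binding $\ell$, the difference equals exactly $t+1$, certifying via \Cref{thm:totmult-bi-graph} that $(a,b)$ is not $t$-\totbi.

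The main obstacle I anticipate is twofold. First, I must ensure the chosen sequence has $(a,b)$ as a \emph{genuine} balanced partition (so $\sum a = \sum b$) while simultaneously hitting $a_1 b_1 = \sum d/2 + t + 2$ exactly; this couples the parameters $q$, $k$, and $t$ through a Diophantine relation, and I expect to solve for a one-parameter family (e.g. fixing $t$ and solving for $q$ in terms of $k$, analogous to $q = rk$ in the max-multiplicity case). Second, and more delicate, I must verify that the binding index really produces slack $t+1$ rather than merely $\geq t+1$ at some other index that would overshoot and make the example less clean—ideally I want the violation to be tight (slack exactly $t+1$) to dovetail with the matching upper bound. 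Should the fully uniform construction fail to hit the threshold exactly, the fallback is to perturb two entries (as in $(q-1)^2$) to fine-tune $\sum d$ modulo the gap, trading one unit of volume for the needed offset.

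Concretely, I would set up $a = b = (q^{k-1}, s)$ for a suitable smaller tail value $s < q$, compute $\sum a = (k-1)q + s$ and hence $\sum d / 2 = (k-1)q + s$, and then impose $a_1 b_1 = q^2 = (k-1)q + s + t + 2$. Choosing $s$ and the relationship between $q$ and $k$ to satisfy this while keeping the sequence non-increasing and valid is the crux. Then the slack at $\ell = 1$ is $a_1 - \min\{1,b_1\}\cdot q = \cdots$; more usefully, I expect the binding index to give $\sum_{i=1}^{\ell} a_i - \sum_{j=1}^{\ell}\conj{b}_j = t+1$ after substituting the threshold relation, exactly as the inequality chain in \Cref{lem:Delta<sqrtVol-s-tot-bigraphic} becomes an equality-plus-one. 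This confirms $\totmultbi(a,b) = t+1 > t$, so $(a,b)$ is not $t$-\totbi, completing the proof of tightness.
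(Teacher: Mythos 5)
Your strategy is the right one and is in fact the paper's: take a near-uniform partition $a = b = (q^{k-1}, s)$ with a small tail, pin $a_1 b_1$ exactly one unit above the threshold of \Cref{lem:Delta<sqrtVol-s-tot-bigraphic}, and certify failure of the condition of \Cref{thm:totmult-bi-graph} at some index. The paper's example is precisely the instance $q = k$, $s = 1$ of your family: $d = (k^{2(k-1)}, 1^2)$, $a = b = (k^{k-1}, 1)$, $t = k-3$ (so one needs $k \geq 4$ for $t$ to be a positive integer). However, your write-up stops at the plan and defers exactly the steps that constitute the proof, and the calibration you leave open is not a formality. Writing $t = q^2 - (k-1)q - s - 2$ from your threshold relation and evaluating the slack at $\ell = k-1$ (with $s \leq k-1 \leq q$, so $\sum_i \min\set{k-1, b_i} = (k-1)^2 + s$) gives
\[
\sum_{i=1}^{k-1} a_i - \sum_{i} \min\set{k-1, b_i} - t \;=\; 2 - \paren{q - (k-1)}^2,
\]
so a violation occurs \emph{only} when $\abs{q-(k-1)} \leq 1$; a generic solution of your Diophantine relation produces no counterexample at all. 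You never fix $q$, $s$, or the violating index, so the decisive verification is absent.

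A second concrete error is your guess about where the violation occurs. You anticipate the binding index at $\ell = 1$ (or "intermediate"), but in the working instance the slack is $\sum_{i=1}^{\ell} a_i - \sum_i \min\set{\ell, b_i} = \ell k - ((k-1)\ell + 1) = \ell - 1$ for $\ell \leq k-1$, which \emph{increases} with $\ell$ and exceeds $t = k-3$ only at the top index $\ell = k-1$; at $\ell = 1$ the inequality holds comfortably ($k \leq 2k-3$). Since locating the violating index and checking the inequality there is the entire content of the lemma, this is a genuine gap, though of the "unfinished" rather than "wrong-headed" kind: completing your plan with $q = k$, $s = 1$, $t = k-3$ reproduces the paper's proof, and your hoped-for exact tightness does materialize there, with $\totmultbi(a,b) = k-2 = t+1$.
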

\begin{proof}
Consider the sequence $d = (k^{2(k-1)},1^2)$, for a positive integer $k>1$. 
This sequence has only one partition $(a,b) \in \bp(d)$, where $a = b = (k^{k-1},1)$.
Observe that $a_1 b_1 = k^2$, while $\sum d/2 = k(k-1) + 1$.

Assume that $t = k-2$. Hence, $a_1 b_1 = \sum d/2 + t + 1$.
For every $\ell < k$, we have that
\[ 
\sum_{i= 1}^k \min\{\ell ,b_i\} + t
=     k + (\ell-1)(k-1) + k - 2 
=     \ell k - \ell - 1 + k
\geq \ell k
=     \sum_{i=1}^\ell a_i
~.
\]
For $\ell = k$, we have
\[
\sum_{i= 1}^k \min\{\ell ,b_i\} + t
\geq \sum d/2
=     \sum_{i=1}^k a_i
~.
\]

Now assume that $t = k-3$. Hence, $a_1 b_1 = \sum d/2 + t + 2$.
For every $\ell < k$, we have that
\[ 
\sum_{i= 1}^k \min\{\ell ,b_i\} + t
=     k + (\ell-1)(k-1) + k - 3
=     \ell k - \ell - 2 + k
~.
\]
If $\ell = k-1$, we get that
\[ 
\sum_{i=1}^k \min\{\ell ,b_i\} + t
=     (k-1)k - (k-1) - 2 + k
=     (k-1)k - 1
<     \sum_{i=1}^\ell a_i
~,
\]
which means that $(a,b)$ is not $t$-\totbi.
\end{proof}

Similar to above, 
Lemma~\ref{lem:Delta<sqrtVol-r-tot-bigraphic} (stated for BDR$^P$) implies the following for BDR.

\begin{corollary}
Let $t$ be a positive integer and $d$ be a partitionable degree sequence. 
If $d_1^2 \leq \sum d / 2 + t + 1$, then any partition $(a,b) \in \bp(d)$ is $t$-\totbi.
\end{corollary}



\bibliography{realizations}
\bibliographystyle{abbrvnat}

\end{document}